\documentclass{amsart}

\usepackage{amssymb}
\usepackage{tikz-cd}

\usepackage[bookmarksnumbered, bookmarksopen, 
colorlinks,citecolor=blue,linkcolor=blue,backref]{hyperref}

\newtheorem{theorem}{Theorem}[section]
\newtheorem{proposition}[theorem]{Proposition}
\newtheorem{lemma}[theorem]{Lemma}
\newtheorem{corollary}[theorem]{Corollary}

\theoremstyle{definition}
\newtheorem{definition}[theorem]{Definition}
\newtheorem{remark}[theorem]{Remark}

\newcommand{\qqed}{\hfill\Box}

\begin{document}
	
	
	\title[Semifree Cochain Models for Fibrations Revisited]{Semifree Cochain Models for Fibrations Revisited} 
	
	\author{Yanning Guo} 
	\address{Institute of Mathematics, Academy of Mathematics and Systems Science, Chinese Academy of Sciences, Beijing 100190, China} 
	\email{guoyanning@amss.ac.cn} 
	\urladdr{} 
	
	 \author{Ruizhi Huang} 
\address{State Key Laboratory of Mathematical Sciences \& Institute of Mathematics, Academy of Mathematics and Systems Science, Chinese Academy of Sciences, Beijing 100190, China
\newline
\indent School of Mathematical Sciences, University of Chinese Academy of Sciences, Beijing 100049, China} 
\email{huangrz@amss.ac.cn} 
   \urladdr{https://sites.google.com/site/hrzsea/}

	\subjclass[2010]{Primary 
		55R20, 
		55P62; 
		Secondary 
		18G35, 
	}
	\keywords{fibrations, dualized twisted tensor products, semifree cochain models, differential graded algebras, semifree dg modules, spectral sequences}
	
	
	\begin{abstract}
We construct cochain models for Serre fibrations over path-connected bases
with coefficients in an arbitrary field. Our construction refines Brown's
twisted tensor product to produce a differential graded module model over the
singular cochain algebra of the base. If
the homology of the fibre is finite dimensional in each degree and on which the action
of the fundamental group of the base is degreewise nilpotent, then this model is a semifree
resolution generated by the cohomology of the fibre. This extends the
classical construction for simply connected bases. As applications, we
compare the resulting model with arbitrary semifree resolutions, showing that
every such resolution recovers the cohomology of the fibre after base change
to the basepoint, and obtain an explicit bound on the page at which an
associated spectral sequence collapses.
\end{abstract}

\maketitle

\tableofcontents

\section{Introduction}\label{intro}
Let $p\colon E\to B$ be a Serre fibration over a path-connected based space
$(B,b_0)$, and let $F=p^{-1}(b_0)$ denote its fibre. A basic problem in
algebraic topology is to construct an algebraic model of the Serre fibration
relative to a chosen algebraic model of the base, such as the singular
cochain algebra $(C^*(B;\Bbbk),\delta_B)$, where $\Bbbk$ is the ground field.

Over a field of characteristic zero, Sullivan~\cite{Sul77} and
Quillen~\cite{Qui69} developed complementary algebraic approaches that gave
rise to the rich theory of rational homotopy. Within Sullivan's framework,
the semifree resolution is a fundamental tool for modeling fibrations. Over a
field $\Bbbk$ of arbitrary characteristic, a classical semifree cochain model
is available when the base is simply connected and suitable finite-type
hypotheses are satisfied; see~\cite[Theorem~7.5]{FHT}.

In this paper, we construct differential graded module models for Serre fibrations
over the singular cochain algebra of the base with coefficients in an
arbitrary field, without assuming that the base is simply connected. When the homology of the fibre is
of finite type and the fundamental group of the base acts degreewise nilpotently on it,
the resulting model is a semifree resolution generated by the cohomology of
the fibre. Our construction establishes a connection between semifree modules
over singular cochains and Brown's twisted tensor products~\cite{B}. The
twisted tensor products and the related Hirsch--Brown model \cite{Hir, B} have
played an important role in the work of Allday and Puppe~\cite{AP} and of
Amann and Zoller~\cite{AZ}, as well as in VandeBogert's recent
breakthrough~\cite{Van26} on the Halperin--Carlsson
Conjecture~\cite{Car83, Hal85} in characteristic two.

We recall the notion of a semifree module. Let $\Bbbk$ be a field of arbitrary
characteristic, and let $(R,d)$ be a differential graded algebra over
$\Bbbk$. An $(R,d)$--module $(M,d)$ is called
\emph{semifree over $(R,d)$}, or simply \emph{semifree} when the dga is
understood, if it admits an increasing filtration
\[
0=M(-1)\subset M(0)\subset M(1)\subset\cdots\subset M(k)
\subset M(k+1)\subset\cdots
\]
such that
\[
M=\bigcup_{k\geq -1}M(k),
\]
each $M(k)$ is a $(R,d)$--submodule of $M$, and, for every
$k\geq 0$, the quotient
\[
M(k)/M(k-1)
\]
is a free graded $R$--module generated by cycles. A \emph{semifree
resolution} of a $(R,d)$--module $(Q,d)$ is a
quasi--isomorphism
\[
m\colon (M,d)\xrightarrow{\simeq}(Q,d)
\]
of $(R,d)$--modules, where $(M,d)$ is semifree.

We say that $H_*(F;\Bbbk)$ is \emph{of finite type} if
$H_q(F;\Bbbk)$ is finite-dimensional for every $q$. The action of
$\pi_1(B,b_0)$ on $H_*(F;\Bbbk)$ is said to be \emph{degreewise nilpotent} if, for each $s\geq 0$, $H_s(F;\Bbbk)$ admits a finite filtration of length $N_s$ by subspaces that is
invariant under the action and such that the induced action on each
successive quotient is trivial. The action is further said to be \emph{nilpotent} if the sequence $\{N_s\}_{s\geq 0}$ admits an upper bound $N$; see
Definition~\ref{defnilact}.

For a chain complex $C$, we write $C^\vee$ for its degreewise linear dual.
We use the dual cochain convention of Definition~\ref{defdual}; under this
convention, the degreewise dual of the singular chain complex is naturally
isomorphic to the usual singular cochain complex. The
\emph{vertex--relative chain complex} $C_*(B|b_0;\Bbbk)$ is generated by
those singular simplices of $B$ all of whose vertices map to $b_0$. We set
\[
C^*(B|b_0;\Bbbk)=C_*(B|b_0;\Bbbk)^\vee.
\]
The chain coalgebra comparison of Proposition~\ref{propqiso} induces a dga
quasi--isomorphism
\[
h_B^\vee\colon
C^*(B|b_0;\Bbbk)
\xrightarrow{\simeq}
C^*(B;\Bbbk).
\]
The map
\[
p^\sharp\colon
C^*(B;\Bbbk)
\longrightarrow
C^*(E;\Bbbk)
\]
equips $C^*(E;\Bbbk)$ with its usual 
$(C^*(B;\Bbbk),\delta_B)$--module structure. Restriction of scalars along $h_B^\vee$
then equips it with the
$(C^*(B|b_0;\Bbbk),\delta_B)$--module structure used below.

Let $\mathcal E(F)$ denote the graded algebra of endomorphisms of
$H_*(F;\Bbbk)$ of nonnegative degree. A \emph{twisting cochain} is a degree $-1$ map
\[
\psi\colon
C_*(B|b_0;\Bbbk)
\longrightarrow
\mathcal E(F)
\]
in the sense of Definition~\ref{defcalE}.
As in
Definition~\ref{deftdiff}, such a twisting cochain determines a twisted
differential $\partial_\psi$ on
\[
C_*(B|b_0;\Bbbk)\otimes H_*(F;\Bbbk).
\]
We denote by $\delta_\psi$ the differential on
\[
\big(
C_*(B|b_0;\Bbbk)\otimes H_*(F;\Bbbk)
\big)^\vee
\]
obtained by degreewise dualizing $\partial_\psi$ according to the convention
of Definition~\ref{defdual}, and call it the \emph{dualized twisted
differential}. See also the discussion following Theorem~\ref{mthm1}.

\begin{theorem}\label{mthm1}
Let $p\colon E\to B$ be a Serre fibration over a path-connected based space
$(B,b_0)$, with fibre $F=p^{-1}(b_0)$.
\begin{itemize}
\item[(i)] There exist a twisting cochain
\[
\psi\colon
C_*(B|b_0;\Bbbk)
\longrightarrow
\mathcal E(F)
\]
and a quasi--isomorphism of
$(C^*(B|b_0;\Bbbk),\delta_B)$--modules
\[
\zeta\colon
\Big(
\big(
C_*(B|b_0;\Bbbk)\otimes H_*(F;\Bbbk)
\big)^\vee,
\delta_\psi
\Big)
\xrightarrow{\simeq}
C^*(E;\Bbbk).
\]
If $H_*(F;\Bbbk)$ is of finite type, the source of $\zeta$ is canonically
identified, as a $(C^*(B|b_0;\Bbbk),\delta_B)$--module, with
\[
\big(
C^*(B|b_0;\Bbbk)\otimes H^*(F;\Bbbk),
\delta_\psi
\big).
\]

\item[(ii)] Suppose that $H_*(F;\Bbbk)$ is of finite type and that the action
of $\pi_1(B,b_0)$ on $H_*(F;\Bbbk)$ is degreewise nilpotent. Then, under the
identification in~{\rm(i)}, the source of $\zeta$ is a semifree
$C^*(B|b_0;\Bbbk)$--module. Extension of scalars along $h_B^\vee$ yields a
semifree resolution
\[
m\colon
\big(
C^*(B;\Bbbk)\otimes H^*(F;\Bbbk),
\delta_\psi
\big)
\xrightarrow{\simeq}
C^*(E;\Bbbk)
\]
of $C^*(E;\Bbbk)$ as a 
$(C^*(B;\Bbbk),\delta_B)$--module.
\end{itemize}
\end{theorem}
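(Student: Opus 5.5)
The plan is to follow Brown's original construction of the twisted tensor product and then dualize. \emph{Step 1.} Replace $E$ by the restriction to the vertex--relative simplices. Let $C_*(E|b_0)$ denote the chains on singular simplices of $E$ whose image in $B$ lies in $C_*(B|b_0)$, i.e. all vertices map to $b_0$. A fibrewise homotopy-lifting argument, parallel to Proposition~\ref{propqiso}, shows that the inclusion $C_*(E|b_0)\to C_*(E)$ is a quasi--isomorphism. It is compatible with $h_B$ and the coalgebra comodule structures.

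\emph{Step 2.} Construct $\psi$ and a chain map
\[
\Phi\colon \big(C_*(B|b_0)\otimes H_*(F),\partial_\psi\big)\to C_*(E|b_0)
\]
by Brown's inductive procedure. Choose a chain-level section $\sigma\colon H_*(F)\to Z_*(C_*(F))$ of cycle representatives. For each simplex $\tau$ of $C_*(B|b_0)$, use the Serre lifting property to choose a "transport" over $\tau$: a map $\Delta^n\times F\to E$ covering $\tau$, or rather a compatible system of chain maps $C_*(F)\to C_*(E)$ lying over $\tau$. Then define $\psi$ and $\Phi$ by induction on the filtration degree $n=\dim\tau$. The inductive step requires solving an equation of the form $\partial(\text{unknown})=(\text{already-defined cycle in the fibre over } \tau)$. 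The obstruction class lies in $H_*(F)$, and it is absorbed into $\psi(\tau)\in\mathcal E(F)$ of degree $n-1$. The twisting cochain equation $\delta\psi=\psi\cup\psi$ follows from $\partial^2=0$. Then $\Phi$ is a map of $C_*(B|b_0)$--comodules. Comparing the filtrations by base degree, the $E^2$ pages are $H_*(B;H_*(F))$ with local coefficients on both sides: here one uses that $\psi$ restricted to $1$--simplices, which are loops at $b_0$, realizes the $\pi_1$--action. Hence $\Phi$ is a quasi--isomorphism by the Serre spectral sequence comparison. This works because the filtration is bounded below and exhaustive.

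\emph{Step 3.} Dualize. Set $\zeta$ equal to the dual of $\Phi$ composed with the inverse restriction from Step~1, which is a quasi--isomorphism over a field. Comodule compatibility dualizes to $C^*(B|b_0)$--linearity via cap/cup duality in the convention of Definition~\ref{defdual}. Under finite type, $(C_*(B|b_0)\otimes H_*(F))^\vee=\prod_q C^*(B|b_0)\otimes H^q(F)$ degreewise. Since $C_*(B|b_0)$ and $H_*(F)$ are nonnegatively graded, only finitely many terms contribute in each total degree, so the product equals the tensor product. This proves (i).

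\emph{Step 4, for (ii).} Build the semifree filtration. Refine the given invariant filtrations of each $H_s(F)$ to a single filtration of $H^*(F)$ by degree $s$ and nilpotency step. Order the pieces so that $\delta_\psi$ of a generator in piece $k$ lands in $C^*(B|b_0)\otimes(\text{pieces}<k)$. The components of $\psi$ of positive degree raise homological degree in $H_*(F)$ by at most\ldots; more precisely, $\psi(\tau)$ for $\dim\tau\ge2$ lowers the cohomological fibre degree. The degree-$0$ components, coming from $1$--simplices, encode the $\pi_1$--action and are strictly triangular by nilpotency. Taking the filtration lexicographic in (fibre degree $s$, nilpotency index), each quotient is free on cycles.

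One must arrange $\psi$ on $0$--simplices (the constant simplex at $b_0$) to be zero. One must also arrange that $\psi$ on $1$--simplices equals the action minus the identity, so that its triangularity with respect to the invariant flag holds. This requires choosing $\sigma$ compatibly, and this is the main obstacle: degreewise nilpotency is only a statement in homology, while $\psi_1(\gamma)$ must be controlled exactly. I would handle it by choosing $\psi_1(\gamma)$ as the homology map of transport minus identity, which is canonical. Finally, extension of scalars $C^*(B)\otimes_{C^*(B|b_0)}-$ preserves semifreeness. It also preserves quasi--isomorphisms of semifree modules along the quasi--isomorphism $h_B^\vee$, giving $m$.
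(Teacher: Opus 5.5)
\textbf{There is a genuine gap: your comparison map points the wrong way.} You construct a forward map $\Phi\colon (C_*(B|b_0)\otimes H_*(F),\partial_\psi)\to C_*(E|F)$, but dualization reverses arrows. Even granting that $\Phi$ is strictly colinear, $\Phi^\vee$ is a $K^\vee$--linear quasi--isomorphism (with $K=C_*(B|b_0;\Bbbk)$) \emph{from} $C^*(E|F;\Bbbk)$ \emph{to} $\big((K\otimes H_*(F;\Bbbk))^\vee,\delta_\psi\big)$. The map $\zeta$ must go from the model to $C^*(E;\Bbbk)$.

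No ``inverse restriction'' repairs this:
\begin{itemize}
\item A quasi--inverse of a quasi--isomorphism of dg modules need not be realizable by a module map. In part~(i) there is no finite-type or nilpotence hypothesis, so you cannot invoke semifreeness and Proposition~\ref{proplift}.
\item The restriction $i^\vee\colon C^*(E;\Bbbk)\to C^*(E|F;\Bbbk)$ is not even linear for the module structure through $h_B^\vee$. That would require $h_Bi_Bp_\sharp=p_\sharp$ on $C_*(E|F)$. In the Appendix construction, however, $h_B$ sends a based simplex $\sigma$ to $\sigma\circ r^n\circ CL^n\neq\sigma$.
\end{itemize}
What is needed, and what the paper identifies as its main technical step, is a chain of \emph{strictly colinear} chain homotopy equivalences in the opposite direction,
\[
C_*(E)\xrightarrow{h_E}C_*(E|F)\xrightarrow{\omega_p}(K\otimes C_*(F),\partial_\varphi)\xrightarrow{\theta}(K\otimes H_*(F),\partial_\psi),
\]
whose dual is $\zeta=h_E^\vee\omega_p^\vee\theta^\vee$. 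The three maps are obtained as follows.
\begin{itemize}
\item $h_E$ is chosen fibre-compatibly so that $p_\sharp h_E=h_Bp_\sharp$ (Lemma~\ref{lemfibcompatible}).
\item $\omega_p$ refines Brown's acyclic-model \emph{inverse}. It uses the cofree description~\eqref{eqcofreeformula} and corrects only the fibre component on the universal simplex $\zeta_n$ (Lemma~\ref{lembrowncomodule}).
\item $\theta=(\mathrm{id}\otimes\theta')(\Delta_K\otimes\mathrm{id})$ is colinear by coassociativity (Lemma~\ref{lemtheta}).
\end{itemize}

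\textbf{Other steps are asserted rather than proved.}
\begin{itemize}
\item Strict colinearity of a forward map built from product-type lifts over $\tau$ is not automatic; the paper claims none for Brown's $\chi_p$. In unnormalized chains, Alexander--Whitney front faces of such lifts project to degenerate simplices (for instance the constant simplex $T_0$), not to the faces $\tau[0,\ldots,i]$.
\item A Serre fibration lifts homotopies only of CW complexes, so a transport $\Delta^n\times F\to E$ is not available for general $F$. The paper avoids this with the mapping-path Hurewicz replacement and Brown's transitive replacement, then transports back by conjugation with $\alpha$ (Proposition~\ref{propfibertransfer}).
\item Your Serre spectral sequence comparison needs a base-degree filtration of $C_*(E|F)$, which you never define.
\end{itemize}

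\textbf{Part~(ii).} Your outline agrees with Lemma~\ref{lemnil}, Theorem~\ref{theoremsf} and Proposition~\ref{theorembc}: the identity $\psi(T)=[T]-1$ on based $1$--simplices, the filtration by fibre degree and nilpotence step, and base change along $h_B^\vee$. But defining $m$ by $m(r\otimes x)=p^\sharp(r)\zeta(x)$ presupposes a $K^\vee$--linear $\zeta$ in the correct direction, which your argument does not supply.
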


The proof of Theorem~\ref{mthm1} begins with Brown's twisted tensor product
construction~\cite{B}. For what Brown calls a \emph{weakly transitive
fibration}, he constructs a twisted tensor product on
\[
C_*(B|b_0;\Bbbk)\otimes C_*(F;\Bbbk)
\]
together with a chain homotopy equivalence to the vertex--relative chain
complex $C_*(E|F;\Bbbk)$. His acyclic-model argument also produces a chain
homotopy inverse. The main technical step in our proof is to refine Brown's
construction of this inverse so that it becomes a strict morphism of
differential graded comodules over $C_*(B|b_0;\Bbbk)$. Brown's subsequent
replacement of the fibre chain complex by its homology produces the twisting
cochain $\psi$ appearing in the theorem. After degreewise dualization, the
refined inverse becomes a strict morphism of
$(C^*(B|b_0;\Bbbk),\delta_B)$--modules, rather than merely a morphism of the underlying
cochain complexes. Composing this morphism with the vertex--relative singular
cochain comparison
\[
C^*(E|F;\Bbbk)\xrightarrow{\simeq}C^*(E;\Bbbk)
\]
yields the quasi--isomorphism $\zeta$ in part~{\rm(i)}. Thus, the key
structural point of Theorem~\ref{mthm1} is the promotion of Brown's
chain-level construction to the category of differential graded modules over
the cochains of the base.

Since Brown's construction is formulated for weakly transitive fibrations, it
does not apply directly to an arbitrary Serre fibration. We first replace the
given fibration by its mapping-path fibration, which is a Hurewicz fibration
over the same base, and then apply Brown's transitive replacement. The
resulting comparisons induce a homotopy equivalence on total spaces and a
homology isomorphism on fibres, allowing the differential graded module model
to be transferred back to the original fibration. This procedure is carried
out in Subsections~\ref{ssechurrep} and~\ref{ssecttransrep} and culminates in
Theorem~\ref{theorembasedmodel}. The dualized twisted differential
$\delta_\psi$ is constructed as part of this procedure, and its explicit
formula under the finite-type identification in part~{\rm(i)} is given in
Lemma~\ref{lemdelta}.

The finite-type and degreewise nilpotence hypotheses serve distinct purposes. Neither is
needed for the degreewise dual model or for the quasi--isomorphism $\zeta$ in
part~{\rm(i)}. The finite-type hypothesis is used only to identify the source
of $\zeta$ with
\[
\big(
C^*(B|b_0;\Bbbk)\otimes H^*(F;\Bbbk),
\delta_\psi
\big),
\]
whereas degreewise nilpotence of the action of $\pi_1(B,b_0)$ on $H_*(F;\Bbbk)$ is
equivalent to the corresponding nilpotence condition on $\psi$. The latter
condition provides the filtration with respect to which the model is
semifree. Extension of scalars along $h_B^\vee$ then yields the resolution
$m$ in part~{\rm(ii)}. Degreewise nilpotence condition also appear in
Dwyer's convergence theorem for the Eilenberg--Moore spectral
sequence~\cite{Dwyer}; here, however, degreewise nilpotence arises directly at the chain
level by providing the semifree filtration on the dualized Brown model.

\begin{definition}
Under the hypotheses of Theorem~\ref{mthm1}{\rm(ii)}, the semifree resolution
\[
m\colon
\big(
C^*(B;\Bbbk)\otimes H^*(F;\Bbbk),
\delta_\psi
\big)
\xrightarrow{\simeq}
C^*(E;\Bbbk)
\]
constructed in Theorem~\ref{mthm1} is called a \emph{semifree cochain model}
of the Serre fibration $p$.
\end{definition}

The classical uniqueness theorem for semifree resolutions shows that the
semifree cochain model is homotopy equivalent, as a resolution of the $(C^*(B;\Bbbk),\delta_B)$-module
$C^*(E;\Bbbk)$, to any other semifree resolution; see
Proposition~\ref{proplift}. The following theorem combines this comparison
with a base-change statement: after reduction to the basepoint, every
semifree resolution recovers the cohomology of the fibre up to
quasi--isomorphism. It extends the corresponding result for simply connected
bases in~\cite[Theorem~7.5]{FHT} to the setting of
Theorem~\ref{mthm1}{\rm(ii)}.

Let
\[
\varepsilon_B\colon
C^*(B;\Bbbk)
\longrightarrow
\Bbbk
\]
be the augmentation induced by the basepoint $b_0$. For a
$(C^*(B;\Bbbk),\delta_B)$--module of the form
\[
\big(
C^*(B;\Bbbk)\otimes W,
d
\big),
\]
we write $(W,\bar d)$ for the differential graded vector space obtained by
extension of scalars along $\varepsilon_B$.

\begin{theorem}\label{mthm2}
Let $p\colon E\to B$ be a Serre fibration over a path-connected based space
$(B,b_0)$, with fibre $F=p^{-1}(b_0)$. Suppose that $H_*(F;\Bbbk)$ is of
finite type and that the action of $\pi_1(B,b_0)$ on $H_*(F;\Bbbk)$ is
degreewise nilpotent. Let
\[
m_W\colon
\big(C^*(B;\Bbbk)\otimes W,d\big)
\xrightarrow{\simeq}
C^*(E;\Bbbk)
\]
be a semifree resolution of $C^*(E;\Bbbk)$ as a
$(C^*(B;\Bbbk),\delta_B)$--module. Then there is a homotopy equivalence of $(C^*(B;\Bbbk),\delta_B)$--modules
\[
n\colon
\big(C^*(B;\Bbbk)\otimes W,d\big)
\xrightarrow{\simeq}
\big(C^*(B;\Bbbk)\otimes H^*(F;\Bbbk),\delta_\psi\big)
\]
such that $m\circ n$ and $m_W$ are homotopic as morphisms of $(C^*(B;\Bbbk),\delta_B)$--modules. Moreover, after extension of scalars along
$\varepsilon_B$, the map $n$ induces a quasi--isomorphism
\[
\bar n\colon
(W,\bar d)
\xrightarrow{\simeq}
\big(H^*(F;\Bbbk),0\big).
\]
\end{theorem}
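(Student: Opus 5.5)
The proof splits into two parts. The first produces the homotopy equivalence $n$ from the uniqueness of semifree resolutions. The second computes what $n$ does after reducing to the basepoint.

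\emph{Existence of $n$.} By Theorem~\ref{mthm1}(ii), the map
\[
m\colon \big(C^*(B;\Bbbk)\otimes H^*(F;\Bbbk),\delta_\psi\big)\xrightarrow{\simeq} C^*(E;\Bbbk)
\]
is a semifree resolution. Since $m_W$ is also a semifree resolution of the same module, Proposition~\ref{proplift} applies. Its lifting property for semifree modules along quasi--isomorphisms gives a morphism $n$ of $(C^*(B;\Bbbk),\delta_B)$--modules with $m\circ n\simeq m_W$. By two-out-of-three, $n$ is a quasi--isomorphism. A quasi--isomorphism between semifree modules is a homotopy equivalence, again by Proposition~\ref{proplift} or the standard Whitehead-type argument.

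\emph{Reduction to the basepoint.} Extension of scalars $\Bbbk\otimes_{C^*(B;\Bbbk)}-$ along $\varepsilon_B$ preserves homotopies of module maps. It therefore carries the homotopy equivalence $n$ to a homotopy equivalence
\[
\bar n\colon (W,\bar d)\to \big(H^*(F;\Bbbk),\bar\delta_\psi\big),
\]
which is in particular a quasi--isomorphism. It remains to show $\bar\delta_\psi=0$.

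For this I use the explicit formula of Lemma~\ref{lemdelta}, transported along the extension of scalars $h_B^\vee$. The component of $\delta_\psi$ landing in $1\otimes H^*(F)$ comes from dualizing $\partial_\psi$ restricted to the degree-zero simplices. Since $C_*(B|b_0)$ has a single $0$-simplex, namely the constant simplex at $b_0$, this component is the dual of $d_{H_*(F)}=0$ together with $\psi$ evaluated on that vertex. That value vanishes because $\psi$ has degree $-1$ and the vertex has degree $0$, so $\psi$ of it would be an endomorphism of degree $-1$ in $\mathcal E(F)$, which is not allowed.

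All remaining terms of $\delta_\psi$ involve $\psi(\sigma)$ paired against cochains on simplices of positive degree, so they lie in $C^{+}(B)\otimes H^*(F)$. Here $C^{+}(B)$ is contained in the augmentation ideal, and these terms are killed by $\varepsilon_B$. The map $h_B^\vee$ is compatible with the augmentations induced by $b_0$, since both are restriction to the constant $0$-simplex. Hence $\bar\delta_\psi=0$, and $\bar n$ is a quasi--isomorphism onto $(H^*(F;\Bbbk),0)$.

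\emph{Main obstacle.} The delicate point is this last verification: that after base change the differential is literally zero, and not merely zero in homology. This needs some care with the identification
\[
\Bbbk\otimes_{C^*(B)}\big(C^*(B)\otimes_{C^*(B|b_0)}(C^*(B|b_0)\otimes H^*F)\big)\cong \Bbbk\otimes_{C^*(B|b_0)}\big(C^*(B|b_0)\otimes H^*F\big).
\]
It also needs a check, against the dualization convention of Definition~\ref{defdual}, that the degree-one simplices (loops at $b_0$) carrying the $\pi_1$-action contribute only terms with coefficients in positive-degree cochains. I expect this to follow from the normalization conditions on twisting cochains in Definition~\ref{defcalE} together with Lemma~\ref{lemdelta}.
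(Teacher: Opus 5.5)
Your proposal is correct and takes essentially the same route as the paper. Proposition~\ref{proplift} produces $n$ with $m\circ n\simeq m_W$, and its uniqueness clause, applied in both directions, makes $n$ an $R$--module homotopy equivalence; tensoring $n$, a homotopy inverse, and the homotopies with $\Bbbk$ over $R$ then gives the chain homotopy equivalence $\bar n$. The step you flag as the main obstacle is immediate by degree reasons, which is the paper's one-line justification: $\psi$ has degree $-1$ into the nonnegatively graded $\mathcal E(F)$, so every twisting coefficient $h_B^\vee\bigl(\psi^\vee(\alpha_i)\bigr)$ has degree $|\alpha_i|+1\geq1$ and is killed by $\varepsilon_B$, so the reduced differential on $H^*(F;\Bbbk)$ is zero.
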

In particular, Theorem~\ref{mthm2} identifies the differential graded Tor
group
\[
\operatorname{Tor}^{C^*(B;\Bbbk)}
\bigl(\Bbbk,C^*(E;\Bbbk)\bigr)
\cong
H^*(F;\Bbbk),
\]
where $\Bbbk$ is regarded as a right $C^*(B;\Bbbk)$--module via
$\varepsilon_B$, and Tor is computed using a semifree resolution. This
identifies the algebraic target of the Eilenberg--Moore spectral sequence.
The role of degreewise nilpotent actions in its convergence is classical.
Dwyer's theorem~\cite{Dwyer} gives a criterion for strong convergence in
terms of degreewise nilpotence of the action on the homology of the fibre,
with coefficients in an arbitrary abelian group. Section~\ref{secss}
explains the relation between Dwyer's convergence result and the cochain
model constructed here.

A semifree filtration gives rise to another spectral sequence, with the
homological indexing conventions specified in Section~\ref{secss}. When the
cohomology of the fibre is bounded, our construction yields a finite
filtration. The resulting spectral sequence converges to the cohomology of
the total space and admits an explicit collapse bound.

Suppose further that $\pi_1(B,b_0)$ acts nilpotently on $H_*(F;\Bbbk)$. The
\emph{nilpotence index}
\[
\mathrm{nil}\bigl(\pi_1(B,b_0),H_*(F;\Bbbk)\bigr)
\]
of the action is the least positive integer equal to the length of a
filtration on whose successive quotients the induced action is trivial; see
Definition~\ref{defnilact}.

\begin{theorem}\label{mthm3}
Let $p\colon E\to B$ be a Serre fibration over a path-connected based space
$(B,b_0)$, with fibre $F=p^{-1}(b_0)$. Suppose that
$H_*(F;\Bbbk)$ is of finite type, that the action of $\pi_1(B,b_0)$ on
$H_*(F;\Bbbk)$ is nilpotent, and that
\[
H^{\geq D}(F;\Bbbk)=0
\]
for some positive integer $D$. Set
\[
N=
\mathrm{nil}\bigl(\pi_1(B,b_0),H_*(F;\Bbbk)\bigr).
\]
Then the semifree cochain model admits a semifree filtration whose associated
spectral sequence, indexed homologically, collapses at the $E^{ND}$--page.
\end{theorem}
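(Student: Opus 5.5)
The plan is to make the semifree filtration of Theorem~\ref{mthm1}(ii) explicit and count how many steps it has. Fix a filtration of $H_*(F;\Bbbk)$ of length $N$ on whose successive quotients $\pi_1(B,b_0)$ acts trivially; by the assumed nilpotence and the finite bound $N$, one filtration with this uniform length can be chosen in every degree simultaneously. Dualize it to a filtration $0=H^{(0)}\subset H^{(1)}\subset\cdots\subset H^{(N)}=H^*(F;\Bbbk)$. Combine this with the degree filtration of $H^*(F;\Bbbk)$, which is concentrated in degrees $0,\dots,D-1$. For $0\le q\le D-1$ and $1\le j\le N$, the pair $(q,j)$ indexes the generators of degree $q$ lying in $H^{(j)}$.

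Next I will order these $ND$ pairs into a single increasing filtration $M(0)\subset\cdots\subset M(ND-1)$ of the model $\big(C^*(B;\Bbbk)\otimes H^*(F;\Bbbk),\delta_\psi\big)$. The order must be chosen so that the explicit formula of Lemma~\ref{lemdelta} sends each generator into the $C^*(B)$-span of strictly earlier generators, together with its own $\delta_B$ term. The twisting cochain $\psi$ has nonnegative degree and decomposes into two parts. The degree-zero part is supported on $1$-simplices, where it is the holonomy minus the identity, and it strictly lowers the nilpotence index $j$. The parts of positive degree raise homological fibre degree, so on the dual generators they strictly lower the cohomological degree $q$. I will use a lexicographic order, primary in $q$ and secondary in $j$, and check against Lemma~\ref{lemdelta} that both kinds of terms go backwards in this order. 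Each quotient $M(k)/M(k-1)$ is then free on cycles, the filtration is finite of length $ND$, and extension of scalars along $h_B^\vee$ preserves all of this.

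Finally, I will run the spectral sequence of this filtration with homological indexing. Since the filtration has only $ND$ nonzero stages, $0\le k\le ND-1$, every differential $d^r$ with $r\ge ND$ joins filtration degrees differing by at least $ND$, so one end of it lies in a zero group. Hence $d^r=0$ for $r\ge ND$, giving $E^{ND}=E^\infty$, and convergence follows because the filtration is finite and exhaustive.

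I expect the main obstacle to be the ordering check. The components of $\psi$ of positive degree on higher simplices need not preserve the $\pi_1$-filtration. The degree drop guarantees they go backwards in $q$, but I have to make sure that landing in an arbitrary $j$ in lower degree is harmless. Lexicographic order with $q$ primary handles this. If the paper's indexing makes $D$ count degrees differently, for example by including degree $D$, I will adjust the count so that the number of stages is exactly $ND$.
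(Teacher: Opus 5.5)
Your proposal is correct and follows essentially the same route as the paper. The paper also dualizes a length-$N$ filtration to annihilators $\Gamma_i=\Lambda_i^\perp$, orders the pieces $\widetilde V(i,j)=H^{<j}(F;\Bbbk)\oplus\Gamma_{i,j}$ lexicographically with fibre degree primary, uses Lemma~\ref{lemnil} and Lemma~\ref{difflem} to see that the degree-zero twisting terms lower the nilpotence index while all others lower fibre degree, base-changes along $h_B^\vee$, and concludes $d^r=0$ for $r\geq ND$ because the filtration degrees lie in $0\leq p\leq ND-1$. Your choice of an arbitrary invariant length-$N$ filtration instead of the paper's canonical $\Lambda_i$ is harmless, since $\psi(k)=\sum_j a_j([T_j]-1)$ still moves each stage one step down.
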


The bound arises from a filtration having at most $N$ stages in each fibre
cohomological degree. It therefore depends only on the nilpotence index of
the action and the range in which the cohomology of the fibre is nonzero.

$\, $

The paper is organized as follows.

Section~\ref{secsf} reviews the basic properties of semifree modules, including base change and comparison of semifree resolutions.

Section~\ref{secbr} recalls Brown's theory of twisted tensor products and his topological model for weakly transitive fibrations. We then recall the standard mapping-path Hurewicz replacement of a Serre fibration and Brown's transitive replacement. Together these constructions allow Brown's model to be applied to the Serre fibrations considered in the main theorems.

Section~\ref{secdttp} studies degreewise duals of twisted tensor products. We establish their differential graded module structure, analyze Brown's replacement of the fibre complex by its homology, identify the degreewise dual with $K^\vee\otimes H^*(L)$ under a finite-type hypothesis, derive an explicit formula for the dualized twisted differential, and give a $K_1$--nilpotence criterion for semifreeness.

Section~\ref{secsfm} contains the main topological construction. We first develop a vertex--relative singular-chain coalgebra comparison. We then refine Brown's inverse chain map to a strict comodule morphism, dualize this refinement to obtain a strict based cochain-module model, and use the Hurewicz and transitive replacements to treat general Serre fibrations. We identify topological degreewise nilpotence of the action with the algebraic $K_1$--nilpotence condition, prove semifreeness, and finally use base change to obtain the model over $C^*(B;\Bbbk)$ in Theorem~\ref{mthm1}. The comparison results of Theorem~\ref{mthm2} are proved at the end of the section.

Section~\ref{secss} studies spectral sequences associated with these semifree models. We prove the collapse estimate in Theorem~\ref{mthm3} and then place the resulting models in the context of the Eilenberg--Moore spectral sequence and Dwyer's convergence theorem.

Section~\ref{appendix} gives the detailed geometric construction of homotopy equivalences between vertex--relative and ordinary singular chain complexes with certain compatibility with Alexander--Whitney coproduct, including the face-compatible maps $CL^n$ and $\mathcal K^n$ and their compatibility with the projection of a Serre fibration.

$\, $

\noindent{\textbf{Acknowledgements.}} Ruizhi Huang was supported in part by the National Natural Science Foundation of China (Grant nos. 12331003 and 12288201), the National Key R\&D Program of China (No. 2021YFA1002300).

\section{Preliminaries on Semifree Modules}\label{secsf}
    This section recalls material from Chapter~6 of~\cite{FHT} for convenient reference. The detailed definition of differential graded algebras (dga's) and differential graded modules over a dga can also be found in \cite{FHT}.
	
	The semifree property of an $(R,d)$--module is preserved under base change:
	
	\begin{lemma}\label{lembc}
	Let $(M,d)$ be a semifree $(R,d)$--module and $(R,d)\rightarrow (S,d)$ be a $\Bbbk$--dga morphism, then $(S\otimes_R M,d)$ is a semifree $(S,d)$--module. $\qqed$
	\end{lemma}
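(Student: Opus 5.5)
The plan is to transport the semifree filtration of $(M,d)$ directly along the base change. Fix a semifree filtration $0=M(-1)\subset M(0)\subset M(1)\subset\cdots$ with $M=\bigcup_k M(k)$, where each $M(k)$ is an $(R,d)$--submodule and $M(k)/M(k-1)$ is free on cycles. Inductively choose graded subspaces $V_k\subset M(k)$ consisting of cycles modulo $M(k-1)$, that is, $d(V_k)\subset M(k-1)$, whose images form a basis of the free quotient. Then, as graded $R$--modules, $M(k)\cong M(k-1)\oplus R\otimes V_k$, and hence $M\cong R\otimes V$ with $V=\bigoplus_k V_k$. This splitting holds because free graded modules are projective, so each short exact sequence of graded modules splits; it need not respect the differential.

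Next set $N(k)=S\otimes_R M(k)$. Tensoring over $R$ with $S$ preserves the direct sum decompositions, so the maps $N(k-1)\to N(k)$ are injective and $N(k)\cong N(k-1)\oplus S\otimes V_k$ as graded $S$--modules. Each $N(k)$ is a dg $S$--submodule of $S\otimes_R M$, with differential $d(s\otimes m)=ds\otimes m+(-1)^{|s|}s\otimes dm$; this is well defined since $R\to S$ is a dga map. Because tensor products commute with colimits, $S\otimes_R M=\bigcup_k N(k)$.

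It remains to examine the quotients, which is the only real check. Right exactness of $S\otimes_R-$ gives $N(k)/N(k-1)\cong S\otimes_R(M(k)/M(k-1))\cong S\otimes V_k$, a free graded $S$--module. For $v\in V_k$ we have $d(1\otimes v)=1\otimes dv\in N(k-1)$, so the generators $1\otimes v$ are cycles in the quotient. Hence $\{N(k)\}$ is a semifree filtration.

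The main (mild) obstacle is injectivity of $N(k-1)\to N(k)$, since tensoring is not left exact in general. The graded splitting of the first step handles this, and it also shows that $S\otimes_R M$ is free on $V$ as a graded module.
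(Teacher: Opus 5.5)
Your argument is correct and is the standard one. The paper gives no proof of its own and defers to Chapter~6 of \cite{FHT}, where base change is handled as you do: use a graded splitting to write $M\cong R\otimes V$ with $d(V_k)\subset M(k-1)$, and observe that $S\otimes_R M\cong S\otimes V$ carries the filtration $S\otimes V_{\leq k}$, with cycle generators $1\otimes v$ in each quotient. The graded direct-summand decomposition you set up also handles the only exactness concern, injectivity of $S\otimes_R M(k)\to S\otimes_R M$.
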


\begin{lemma}\label{lemqibc}
Let
\[
\alpha:(R,d)\longrightarrow(S,d)
\]
be a quasi--isomorphism of $\Bbbk$--dgas, and let $(M,d)$ be a semifree left differential graded $(R,d)$--module. Then the canonical $\alpha$--linear morphism
\[
\beta_M:(M,d)\longrightarrow\big(S\otimes_RM,d\big),
\qquad
m\longmapsto1\otimes_Rm,
\]
is a quasi--isomorphism. $\qqed$
\end{lemma}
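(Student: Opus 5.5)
The plan is to reduce the general semifree case to the case of a free module generated by cycles, and then use the semifree filtration together with a five-lemma and colimit argument. Write $M=\bigcup_k M(k)$ for the semifree filtration. Since $S\otimes_R-$ is right exact and commutes with colimits, the submodules $S\otimes_R M(k)$ form an increasing filtration of $S\otimes_R M$. The quotients $M(k)/M(k-1)$ are free over $R$, so the short exact sequences $0\to M(k-1)\to M(k)\to M(k)/M(k-1)\to 0$ split as graded $R$--modules. Hence each map $S\otimes_R M(k-1)\to S\otimes_R M(k)$ is injective, and
\[
(S\otimes_R M(k))/(S\otimes_R M(k-1))\cong S\otimes_R\big(M(k)/M(k-1)\big).
\]
The map $\beta_M$ is compatible with these filtrations and with the short exact sequences.

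First step: the free case. Let $M(k)/M(k-1)\cong R\otimes V_k$, where $V_k$ is a graded vector space of cycles and the differential is $d\otimes 1$. Then $S\otimes_R(R\otimes V_k)\cong S\otimes V_k$ with differential $d\otimes 1$. On the associated graded pieces, the induced map is $\alpha\otimes\mathrm{id}_{V_k}$. Since $\Bbbk$ is a field, the Künneth theorem shows this is a quasi--isomorphism.

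Second step: induction on $k$. By the five lemma applied to the long exact homology sequences of the two short exact sequences, $\beta_{M(k)}$ is a quasi--isomorphism for every $k$. The case $k=-1$ is trivial.

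Third step: passage to the union. Homology commutes with filtered colimits (directed unions), and $S\otimes_R M=\operatorname{colim}_k S\otimes_R M(k)$. Therefore $H(\beta_M)=\operatorname{colim}_k H(\beta_{M(k)})$ is an isomorphism.

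The only point needing care is that tensoring preserves the injections and identifies the quotients. This follows from the graded-module splitting of each filtration step, since $M(k)/M(k-1)$ is free and hence projective as a graded $R$--module. I expect no real obstacle beyond this bookkeeping.
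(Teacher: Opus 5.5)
Your proof is correct and is essentially the standard argument in \cite[Chapter~6]{FHT}, which the paper cites in place of a proof of its own. That argument uses the graded splitting of the semifree filtration, identifies the associated graded map with $\alpha\otimes\mathrm{id}_{V_k}$, inducts on $k$ via the five lemma, and passes to the union because homology commutes with directed colimits.
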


\noindent This is the standard base-change property for semifree modules; see  \cite[Chapter~6]{FHT}.

	A semifree filtration has free subquotients generated by cycles. The following lemma gives a useful weakening of this condition.
	
	\begin{lemma}\label{lemsemifree}
	If an $(R, d)$--module $(M, d)$ admits a filtration of $M$
	\[
		0=M(-1)\subset M(0)\subset M(1)\subset \cdots \subset M(k)\subset M(k+1)\subset \cdots
	\]
	\[
		M=\bigcup_{k\geq -1}M(k)
	\]
	such that $M(k)$ is a $(R,d)$--submodule of $(M,d)$ for each $k\geq 0$ and each subquotient $M(k)/M(k-1)$ is $(R,d)$--semifree, then $(M, d)$ is $(R,d)$--semifree itself. $\qqed$ 
	\end{lemma}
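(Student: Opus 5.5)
The plan is to build a single semifree filtration of $M$ by refining the given filtration $\{M(k)\}$ with the semifree filtrations of the subquotients, using the fact that free modules on cycles can be lifted. For each $k\geq 0$, let $Q_k=M(k)/M(k-1)$. By hypothesis $Q_k$ carries a semifree filtration
\[
0=Q_k(-1)\subset Q_k(0)\subset Q_k(1)\subset\cdots,\qquad Q_k=\bigcup_j Q_k(j),
\]
with $Q_k(j)/Q_k(j-1)$ free on a space of cycles $\bar V_{k,j}$. Let $M(k,j)\subset M(k)$ be the preimage of $Q_k(j)$ under the projection $M(k)\to Q_k$. Each $M(k,j)$ is an $(R,d)$-submodule, because preimages of dg submodules under dg module maps are dg submodules. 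We also have
\[
M(k-1)=M(k,-1)\subset M(k,0)\subset\cdots,\qquad \bigcup_j M(k,j)=M(k),
\]
and $M(k,j)/M(k,j-1)\cong Q_k(j)/Q_k(j-1)$, which is free on cycles.

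The main obstacle is that this doubly indexed family, ordered lexicographically, has order type $\omega^2$ rather than $\omega$. So it is not directly a filtration indexed by $\mathbb N$. I would avoid re-indexing altogether and instead construct explicit generators, following the standard characterization in~\cite[Chapter~6]{FHT}. An $(R,d)$-module is semifree if and only if, as a graded $R$-module, it is free on a basis $X=\bigcup_{\ell\ge0}X_\ell$ such that $d(X_\ell)\subset R\cdot(X_0\cup\dots\cup X_{\ell-1})$. Concretely, I would prove this by induction over $\omega^2$: choose lifts $V_{k,j}\subset M(k,j)$ of the generators $\bar V_{k,j}$, which is possible since the subquotients are free and so the extensions split as graded modules. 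Then $M$ is free as a graded $R$-module on $\bigcup_{k,j}V_{k,j}$, by a transfinite splitting argument that is degreewise a union of split extensions. For $v\in V_{k,j}$ we have $dv\in M(k,j-1)$, because $\bar v$ is a cycle in the subquotient.

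Next I would define an $\mathbb N$-indexed filtration by a diagonal argument. Set
\[
M'(n)=\text{the submodule generated by } V_{k,j}\text{ with } k+j\le n .
\]
For this to work, $d$ must preserve $M'(n)$, and $M'(n)/M'(n-1)$ must be free on cycles. The second property fails in general, since $dv$ for $v\in V_{k,0}$ may involve $V_{k-1,j}$ with $j$ arbitrarily large. So instead I would exploit that the definition of semifree allows any exhaustive filtration, and invoke the FHT characterization that $M$ is semifree if and only if it is a union of an increasing sequence of submodules each semifree with semifree quotients. The cleanest route, which I expect to work, is to show directly that $M$ satisfies the lifting property characterizing semifree modules, or equivalently is $K$-projective with free underlying module. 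Concretely, the steps are as follows. First, I show that every semifree module lifts against surjective quasi-isomorphisms. Second, I show that extensions of modules with this lifting property again have it, using the graded splitting and a standard correction of the lift. Third, I show that the property passes to the colimit over $k$ via compatible lifts. Finally, I apply the converse in~\cite[Chapter~6]{FHT}: a module that is free as a graded module, bounded below in the relevant sense, and has this lifting property admits a semifree filtration. I expect the second and third steps to be routine, and that converse to be the point where care is required.
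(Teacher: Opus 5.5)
There is a genuine gap. Your setup is right: the refined filtration $M(k,j)$, the lifted cycle bases, and the observation that the index set is $\mathbb N\times\mathbb N$ in lexicographic order (type $\omega^2$) are all correct. You also correctly see that the diagonal $k+j\le n$ is not $d$-stable.

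From that point on, though, the proposal stops being a proof. Appealing to ``the FHT characterization that $M$ is semifree iff it is an increasing union of submodules with semifree quotients'' is circular, since that is exactly the lemma to be proved. The lifting-property route has the same defect. Your first three steps at best show that $M$ lifts against surjective quasi-isomorphisms, i.e.\ that $M$ is a retract of a semifree module. The decisive fourth step (``free as a graded module, bounded below, with the lifting property implies semifree'') is not a result in Chapter~6 of FHT, and you do not prove it. Moreover, no boundedness hypothesis is available in the lemma. So an $\mathbb N$-indexed semifree filtration is never produced.

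The missing idea is elementary: re-index element by element rather than by the pair $(k,j)$. Set $M(k,-1)=M(k-1)$. You already have a homogeneous $R$-basis $X=\bigcup_{k,j}X_{k,j}$ of $M$, where $X_{k,j}$ lifts a cycle basis of $M(k,j)/M(k,j-1)$. For $x\in X_{k,j}$ you have $dx\in M(k,j-1)$, and $M(k,j-1)$ is free on the $X_{k',j'}$ with $(k',j')<(k,j)$ lexicographically. Write $dx=\sum_y r_y\,y$ in this basis. Only finitely many $r_y\neq0$ occur, and every such $y$ has strictly smaller index.

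Because the lexicographic order on $\mathbb N\times\mathbb N$ is a well-order, recursion on it defines $\ell\colon X\to\mathbb N$:
\begin{itemize}
\item $\ell(x)=0$ if $dx=0$;
\item $\ell(x)=1+\max\{\ell(y)\mid r_y\neq0\}$ otherwise.
\end{itemize}
The maximum is over a finite set of already-defined natural numbers, so $\ell(x)$ is finite.

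Now set $P(n):=R\cdot\{x\in X\mid \ell(x)\le n\}$. This family is $d$-stable by the Leibniz rule, since $dx\in P(\ell(x)-1)$. It exhausts $M$, and $P(n)/P(n-1)$ is free on the images of $\{x\mid \ell(x)=n\}$, which are cycles. This is the required semifree filtration. Note that $\ell$ can be unbounded on a single $X_{k,0}$, which is why one must index by basis elements rather than by $(k,j)$.
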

	
	We recall the following notion of homotopy between $(R,d)$--module morphisms.
	\begin{definition}\label{defRDModht}
		Let $f,g:(M,d_M)\rightarrow (N,d_N)$ be two morphisms of $(R,d)$--modules. We say $f$ and $g$ are {\it homotopic as $(R,d)$-module morphisms} if there exists an $R$-linear morphism $h:M\rightarrow N$ of degree $-1$ such that $f-g=d_N h+hd_M$, denoted by $f\simeq_R g$. 
	\end{definition}
	
	For a general $(R,d)$--module, one may replace it by a semifree resolution as in Section~\ref{intro}. The following lifting property gives the uniqueness of lifts from semifree modules and, together with the standard semifree replacement construction, the usual uniqueness statement for semifree resolutions:
	
	\begin{proposition}\label{proplift}
Let $\theta:(M,d)\to(Q,d)$ and $\eta:(P,d)\to(Q,d)$ be morphisms of $(R,d)$--modules. If $(M,d)$ is $(R,d)$--semifree and $\eta$ is a quasi--isomorphism, then there exists an $(R,d)$--module morphism
\[
\varphi:(M,d)\longrightarrow(P,d)
\]
such that $\eta\varphi\simeq_R\theta$. Its homotopy class is unique. In particular, any two semifree resolutions of the same $(R,d)$--module are related by a homotopy equivalence of $(R,d)$--modules compatible with the resolution maps up to $(R,d)$--module homotopy. $\qqed$
\end{proposition}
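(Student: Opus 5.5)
The plan is the standard semifree lifting argument, following \cite[Chapter~6]{FHT}. Write the filtration of $M$ as $M(k)$, and construct $\varphi$ inductively on $M(k)$ so that a compatible homotopy $h_k\colon \theta|_{M(k)}\simeq_R \eta\varphi_k$ is built at the same time. Because $M(k)/M(k-1)$ is free on cycles, choose a basis of $R$-generators $\{z_\alpha\}$ of $M(k)$ modulo $M(k-1)$. Then $dz_\alpha\in M(k-1)$, and the extension problem reduces to a single generator $z$ with $dz=y\in M(k-1)$, where $\varphi_{k-1}$ and $h_{k-1}$ are already defined on $y$.

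For the induction step, consider the element
\[
c=\theta(z)-h_{k-1}(y)\in Q .
\]
A direct computation using $\theta y-\eta\varphi_{k-1}y=dh_{k-1}y+h_{k-1}dy$ and $dy=0$ gives $dc=\eta\varphi_{k-1}(y)$. Hence the pair $(\varphi_{k-1}(y),c)$ is a cycle in the mapping cone of $\eta$. Since $\eta$ is a quasi-isomorphism, its cone is acyclic. So there exist $p\in P$ and $q\in Q$ with
\[
dp=\varphi_{k-1}(y), \qquad c-\eta(p)=dq .
\]
Set $\varphi(z)=p$ and $h(z)=\pm q$, with the sign fixed by the convention in Definition~\ref{defRDModht}. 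Then extend $R$-linearly using freeness. This gives $d\varphi=\varphi d$ and $\theta-\eta\varphi=dh+hd$ on $M(k)$. The compatible maps pass to the union $M=\bigcup M(k)$, which proves existence.

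For uniqueness, suppose $\varphi,\varphi'$ both satisfy $\eta\varphi\simeq_R\theta\simeq_R\eta\varphi'$. Apply the same inductive cone argument to the difference, viewing it as a map from the semifree module $M\otimes\Lambda$ of a cylinder. Concretely, run the construction on the $(R,d)$-module $M\oplus M\oplus sM$ with its cylinder differential, which is semifree via the filtration inherited from $M$ (Lemma~\ref{lemsemifree}). Lift the map given by $(\varphi,\varphi')$ on the ends, together with the composite homotopy on $sM$, to a homotopy $\varphi\simeq_R\varphi'$. Equivalently, show directly that if $\eta\varphi\simeq_R 0$ then $\varphi\simeq_R 0$, again generator by generator using the acyclic cone of $\eta$.

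For the final assertion, let $m_1\colon M_1\to Q$ and $m_2\colon M_2\to Q$ be semifree resolutions. Lifting gives $\varphi\colon M_1\to M_2$ with $m_2\varphi\simeq m_1$, and $\varphi'\colon M_2\to M_1$ with $m_1\varphi'\simeq m_2$. Then $\varphi'\varphi$ and $\mathrm{id}$ are both lifts of $m_1$ through $m_1$, so uniqueness gives $\varphi'\varphi\simeq_R\mathrm{id}$; symmetrically $\varphi\varphi'\simeq_R\mathrm{id}$.

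I expect the main obstacle to be bookkeeping rather than any conceptual step. The signs in the Koszul conventions and in the cone have to be chosen so that $R$-linear extension of $h$ is compatible with the degree $-1$ convention, that is, so that $h(rz)=(-1)^{|r|}rh(z)$. The cylinder argument for uniqueness also requires checking that the cylinder on a semifree module is again semifree.
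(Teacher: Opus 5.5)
Your argument is correct. It is the standard generator-by-generator lifting argument of \cite[Chapter~6]{FHT}, which is exactly what the paper relies on: the proposition is stated with $\qqed$ and no proof of its own. One point is worth making explicit. The cylinder step for uniqueness needs the \emph{relative} form of your induction: start from $(\varphi,\varphi')$ on the ends $M\oplus M$ with zero homotopy there, and extend only over the generators $sz$ of $sM$, whose differentials land in the stage already constructed. The absolute existence statement alone would not keep the ends fixed, but your mapping-cone argument applies verbatim in the relative setting.
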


\section{Brown's Twisted Tensor Product Model for a Fibration}\label{secbr}
This section recalls the algebraic foundations of Brown's twisted tensor products, Brown's topological model for weakly transitive fibrations, and the replacement constructions needed to apply that model to Serre fibrations.
	\subsection{Twisted Tensor Products over a Commutative Ring}\label{ssecttp}
	In this subsection, we fix the ground ring to be a commutative ring $\Lambda$ with unit $1_\Lambda$. All differentials in this subsection are of degree $-1$ and denoted by $\partial$.
	\begin{definition}\label{defdgc}
We use the following terminology. All complexes in this subsection are nonnegatively graded and have differential of degree $-1$.
\begin{itemize}
\item[(i)] An \emph{augmented $\Lambda$--complex} is a chain complex $(M,\partial_M)$ together with a degree-zero chain map
\[
\varepsilon_M:M\longrightarrow\Lambda,
\]
where $\Lambda$ is concentrated in degree $0$, such that $\varepsilon_M$ is surjective. The map $\varepsilon_M$ is called the \emph{augmentation}.

\item[(ii)] A \emph{counital differential graded $\Lambda$--coalgebra}, or $\Lambda$--dgc, is a chain complex $(K,\partial_K)$ equipped with a coassociative chain map $\Delta_K:K\to K\otimes K$ and a surjective chain map $\varepsilon_K:K\to\Lambda$, called the \emph{counit}, satisfying
\[
(\varepsilon_K\otimes\mathrm{id}_K)\Delta_K
=\mathrm{id}_K
=(\mathrm{id}_K\otimes\varepsilon_K)\Delta_K.
\]
Following Brown, we require the counit to be surjective, so the same map is an augmentation of the underlying chain complex in the sense of~(i). We reserve the word ``counit'' for its coalgebraic role and ``augmentation'' for the underlying complex or algebra. A \emph{coaugmentation} is a morphism of dg coalgebras $\eta_K:\Lambda\to K$ satisfying $\varepsilon_K\eta_K=\mathrm{id}_\Lambda$. We write $1_K:=\eta_K(1)$ and $\bar K:=\ker\varepsilon_K$. Thus $\partial_K1_K=0$, $\Delta_K(1_K)=1_K\otimes1_K$, and $\varepsilon_K(1_K)=1$; moreover, for $k\in\bar K$,
\[
\bar\Delta_K(k):=\Delta_K(k)-1_K\otimes k-k\otimes1_K\in\bar K\otimes\bar K.
\]

\item[(iii)] An \emph{augmented differential graded $\Lambda$--algebra with degree $-1$ differential}, abbreviated $\Lambda$--$\partial$ga, is a unital differential graded algebra $(A,\partial_A,\mu_A,1_A)$ together with a morphism of unital dg algebras $\varepsilon_A:A\to\Lambda$. The map $\varepsilon_A$ is called the \emph{augmentation}; equivalently, it is a chain map satisfying $\varepsilon_A(ab)=\varepsilon_A(a)\varepsilon_A(b)$ and $\varepsilon_A(1_A)=1_\Lambda$.
\end{itemize}
The notation $\partial$ga is used here only to emphasize Brown's homological convention in which the differential has degree $-1$; later, ``dga'' refers to the cohomologically graded algebras obtained by dualization. We keep the standard abbreviation ``dgc'' for coalgebras, since no competing cohomological dgc convention is used in the paper.
\end{definition}

	Let $(K,\partial_K,\Delta_K, \varepsilon_K)$ be a $\Lambda$--dgc and let $(A,\partial_A,\mu_A, \varepsilon_A)$ be a $\Lambda$--$\partial$ga. The following definitions were introduced by Brown in \cite{B}:
	
	\begin{definition}\label{deftch}
	A graded $\Lambda$--linear morphism of degree $-1$
	\[
		\phi=\sum_{q\geq 0}\phi_q\in \bigoplus_{q\geq 0}\mathrm{Hom}_{\Lambda}(K_q,A_{q-1})=:\mathrm{Hom}^{-1}_{\Lambda}(K,A)
	\]
	is called a {\it twisting cochain} if it satisfies $\varepsilon_A\phi_1=0$ and the following relation:
	\begin{equation}\label{tweq}
		\ \partial_{A}\phi_q=\phi_{q-1}\partial_K-\sum_{k=1}^{q-1} (-1)^k\phi_{k}\cup \phi_{q-k}
	\end{equation}
	Here $\cup$ denotes Brown's cup product: for homogeneous cochains $u,v$ and $k\in K$,
	\[
	(u\cup v)(k):=\sum_{(k)}u(k_{(1)})v(k_{(2)}),
	\]
	with the same Brown sign convention used below for the cap product.
	\end{definition}
	
	Now let $(L,\partial_L, \varepsilon_L)$ be an augmented $\Lambda$--complex with an $A$--module structure defined by a morphism $\mu:A\otimes L\rightarrow L$ of augmented $\Lambda$--complexes, and $\phi$ be a twisting cochain in $\mathrm{Hom}^{-1}_{\Lambda}(K,A)$.
	
	\begin{definition}\label{deftdiff}
Let $\phi$ be a twisting cochain. The \emph{cap product induced by $\phi$}
is the degree $-1$ map
\[
-\cap\phi:K\otimes L\longrightarrow K\otimes L
\]
obtained from the coproduct of $K$, the map $\phi$, and the $A$--action on
$L$. Throughout this paper we use Brown's elementwise sign convention:
for homogeneous $k\in K$ and $l\in L$,
\begin{equation}\label{eqcapconvention}
\Delta_K(k)=\sum_{(k)}k_{(1)}\otimes k_{(2)},
\qquad
(k\otimes l)\cap\phi
:=\sum_{(k)} k_{(1)}\otimes \phi(k_{(2)})l.
\end{equation}
Thus the displayed composite
\[
K\otimes L\xrightarrow{\Delta_K\otimes\mathrm{id}}
K\otimes K\otimes L
\xrightarrow{\mathrm{id}\otimes\phi\otimes\mathrm{id}}
K\otimes A\otimes L
\xrightarrow{\mathrm{id}\otimes\mu}K\otimes L
\]
is to be interpreted according to~\eqref{eqcapconvention}, with no
additional Koszul sign inserted in the middle arrow. The induced twisted
differential is
\[
\partial_\phi(k\otimes l)
:=
\partial_\otimes(k\otimes l)
+(-1)^{|k|}(k\otimes l)\cap\phi.
\]
Equation~\eqref{tweq} implies $\partial_\phi^2=0$.
\end{definition}
	
	Note that the equation $\varepsilon_A\phi_1=0$ for $\phi$ is equivalent to $(\varepsilon_{K}\otimes \varepsilon_{L})\partial_\phi=0$ on the tensor product $K\otimes L$ with tensor augmentation. Consequently, $(K\otimes L, \partial_\phi)$ is an augmented $\Lambda$--complex. 
	
	\begin{definition}
	The augmented $\Lambda$--complex $(K\otimes L,\partial_\phi)$ is called a {\it twisted tensor product (over $\Lambda$)} of $K$ and $L$, and the differential $\partial_\phi$ is called the {\it twisted differential} induced by $\phi$. 
    \end{definition}
	
	In Section 9 of \cite{B}, Brown proves that one can replace $(K\otimes L,\partial_\phi)$ by another twisted tensor product $(K\otimes H_*(L),\partial_\psi)$ up to chain homotopy equivalence and recovers a theorem of Hirsch \cite{Hir} via this result. To introduce this theorem, we first give the following definition:
	\begin{definition}\label{defcalE}
Let $L$ be as above. Define the graded endomorphism algebra
\[
\begin{aligned}
\mathcal E(L)&:=\bigoplus_{q\geq0}\mathcal E(L)_q,\\
\mathcal E(L)_q&:=\mathrm{Hom}^q_\Lambda\big(H_*(L),H_*(L)\big)
=\big\{h\in\mathrm{End}_\Lambda(H_*(L))\mid h(H_p(L))\subset H_{p+q}(L)\text{ for every }p\geq0\big\}.
\end{aligned}
\]
Equipped with the trivial differential, composition as multiplication, and the identity endomorphism as unit, $\mathcal E(L)$ is a unital differential graded $\Lambda$--algebra. It acts on $H_*(L)$ by evaluation,
\[
\lambda:\mathcal E(L)\otimes H_*(L)\longrightarrow H_*(L),
\qquad h\otimes x\longmapsto h(x).
\]
\end{definition}
	
	Since all chain complexes under consideration are nonnegatively graded, $|\psi|=-1$, and Brown's construction has $\psi(K_0)=0$, only nonnegative-degree endomorphisms occur. Thus the preceding definition retains every endomorphism used in the construction.

	We use the term \emph{twisting cochain} below in the following slightly extended sense. When the target differential graded algebra is not equipped with an augmentation, a degree $-1$ map is called a twisting cochain if it vanishes on $K_0$ and satisfies the twisting-cochain identity~\eqref{tweq}. When the target is augmented, we retain the normalization condition in Definition~\ref{deftch}. This convention is needed only for Brown's homology replacement
	\[
	\psi:K\longrightarrow\mathcal E(L),
	\]
	because $\mathcal E(L)$ need not carry a canonical algebra augmentation when $H_0(L)$ has rank greater than one. No augmentation of $\mathcal E(L)$ is used in the arguments below. Brown's construction separately ensures that the induced twisted differential on $K\otimes H_*(L)$ preserves the tensor augmentation.

	\begin{theorem}[{Brown \cite[Theorem~9.1]{B}, in the present conventions]}]\label{theoremb2}
If $\Lambda$ is a principal ideal ring and $K$, $L$, and $H_*(L)$ are $\Lambda$--free, then there is a degree $-1$ morphism
\[
\psi\in\mathrm{Hom}^{-1}_\Lambda(K,\mathcal E(L))
\]
satisfying the twisting-cochain identity and inducing a twisted differential $\partial_\psi$ on $K\otimes H_*(L)$. Moreover, there is a chain homotopy equivalence
\[
\theta:(K\otimes L,\partial_\phi)\longrightarrow(K\otimes H_*(L),\partial_\psi)
\]
of augmented $\Lambda$--complexes.$\qqed$
\end{theorem}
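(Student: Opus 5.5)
The plan is to build $\psi$ and $\theta$ by a homological perturbation argument, starting from a strong deformation retraction of $L$ onto $H_*(L)$. Since $\Lambda$ is a principal ideal ring and both $L$ and $H_*(L)$ are $\Lambda$--free, the cycles and boundaries in each degree are free. Hence $B_q(L)\subset Z_q(L)$ splits up to a free presentation, and we can choose chain maps
\[
f\colon L\to H_*(L),\qquad g\colon H_*(L)\to L,
\]
with $H_*(L)$ carrying the zero differential, together with a degree $+1$ homotopy $h$ on $L$ satisfying
\[
fg=\mathrm{id},\qquad gf-\mathrm{id}=\partial_L h+h\partial_L,
\]
and the side conditions $hg=0$, $fh=0$, $h^2=0$. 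The augmentation of $L$ factors through $H_0(L)$, so we may also arrange $f$ and $g$ to be compatible with augmentations. Tensoring with $K$ gives a deformation retraction of $(K\otimes L,\partial_\otimes)$ onto $(K\otimes H_*(L),\partial_K\otimes 1)$. We then regard $t(k\otimes l)=(-1)^{|k|}(k\otimes l)\cap\phi$ as a perturbation of $\partial_\otimes$.

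Next we apply the basic perturbation lemma. The perturbation $t$ strictly lowers the $K$-degree: since $\phi(K_0)=0$, only terms with $|k_{(2)}|\ge1$ contribute, so $|k_{(1)}|<|k|$. The same holds after composing with $1\otimes h$, so the series $\sum_n (t(1\otimes h))^n$ is locally finite. The lemma then produces a perturbed differential on $K\otimes H_*(L)$, a perturbed chain map $\theta$, a perturbed inclusion, and a perturbed homotopy, which together form a chain homotopy equivalence. Moreover, the perturbed differential has the explicit form
\[
\sum_n (1\otimes f)\,t\,\bigl((1\otimes h)t\bigr)^n(1\otimes g).
\]

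The main obstacle is showing that this perturbed differential is a twisted differential $\partial_\psi$. Each term first iterates the coproduct of $K$ and then applies alternating $\phi$'s and $h$'s on the fibre factor. By coassociativity, the $n$-th term can therefore be written as a cap product with the degree $-1$ map
\[
\psi_n(k)=\pm\,f\,\phi(k_{(1)})\,h\,\phi(k_{(2)})\,h\cdots h\,\phi(k_{(n+1)})\,g\in\mathcal E(L),
\]
where $\Delta^{(n)}k=\sum k_{(1)}\otimes\cdots\otimes k_{(n+1)}$. One must verify that the signs match Brown's convention \eqref{eqcapconvention}, which I would do by a direct bookkeeping induction on $n$. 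Once the perturbed differential is identified as $\partial_\psi$ with $\psi=\sum_n\psi_n$, the relation $\partial_\psi^2=0$ together with the injectivity of $\psi\mapsto -\cap\psi$ forces the twisting identity \eqref{tweq} for $\psi$. To obtain this injectivity, apply the cap product to elements $k\otimes x$ and then apply $\varepsilon_K\otimes 1$, using the counit.

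Finally, $\psi(K_0)=0$ follows because $\phi(K_0)=0$. The tensor augmentation is preserved because $\varepsilon_K\otimes\varepsilon$ kills every perturbation term: any term hitting $K_0\otimes H_0$ would require $\psi$ on $K_1$ to land in degree $0$, and that term is $f\phi_1 g$, which vanishes on augmentation because $\varepsilon_A\phi_1=0$ and $f,g$ respect augmentations. The perturbed $\theta$ is likewise augmented. This yields the claimed equivalence.
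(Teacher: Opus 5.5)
Your proof is correct, but it takes a different route from the one the paper relies on. The paper does not reprove this statement. It cites Brown and recalls, in Remark~\ref{rmkbr}, his recursive construction: $\psi$ and a map $\theta'\colon K\otimes L\to H_*(L)$ are built simultaneously by induction on the $K$--degree, $\theta$ is defined as $(\mathrm{id}_K\otimes\theta')(\Delta_K\otimes\mathrm{id}_L)$, and the well-definedness checks are left to Brown. You instead apply the basic perturbation lemma to the tensored deformation retraction, with local nilpotence supplied by the $K$--degree filtration.

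The two routes buy different things. Brown's recursion is elementary, uses only the homotopy relation for $h$, and produces $\theta$ directly in cofree form. Your route imports the perturbation lemma but gives closed formulas, namely $\psi(k)=\sum_n\sum f\phi(k_{(1)})h\phi(k_{(2)})\cdots h\phi(k_{(n+1)})g$ over iterated coproducts, together with an explicit inverse and homotopy.

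Three remarks on details:
\begin{itemize}
\item The step you call the main obstacle is harmless. With the Koszul sign on $1\otimes h$ one gets $(1\otimes h)t(k\otimes l)=\sum(-1)^{|k_{(2)}|}k_{(1)}\otimes h\phi(k_{(2)})l$. The signs therefore telescope to exactly $(-1)^{|k|}$, and $\psi_n$ carries no extra sign.
\item For the twisting identity, apply $\varepsilon_K\otimes\mathrm{id}$ directly to $\partial_\psi^2(k\otimes x)$. This gives $(-1)^{|k|}\bigl(-\psi(\partial k)+\sum(-1)^{|k_{(1)}|}\psi(k_{(1)})\psi(k_{(2)})\bigr)(x)$, whose vanishing is exactly \eqref{tweq}. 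Your injectivity argument tacitly needs $\partial_\psi^2$ to be written as a cap product first, so the direct computation is cleaner.
\item Two properties used later survive your construction. Since $t$, $1\otimes h$ and $1\otimes f$ are $K$--colinear, your $\theta$ is a strict comodule map, as Lemma~\ref{lemtheta} requires. Because the perturbed inverse and homotopy are colinear too, $\theta^\vee$ is even a $K^\vee$--module homotopy equivalence. On $K_1$ your formula reduces to $\psi(T)=f\phi(T)g$, exactly as Brown's does, which is what Lemma~\ref{lemnil} uses.
\end{itemize}
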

	See Remark~\ref{rmkbr} for Brown's explicit construction of $\psi$ and $\theta$.
	
	\subsection{Brown's Twisted Tensor Product Model for Weakly Transitive Fibrations}\label{ssecbrown}
Let $C_*(Y)$ denote the singular chain complex over $\mathbb Z$ of a topological space $Y$. We call the subcomplex $C_*(Y|Z)$ generated by singular simplices which map all vertices of the standard simplex into a subspace $Z\subset Y$ the \emph{vertex--relative singular chain complex}. This is different from the ordinary relative singular chain complex $C_*(Y,Z)$: $C_*(Y|Z)$ is a subcomplex rather than a quotient, and the defining condition concerns only the vertices of each simplex. If $Z=\{z\}$, we write $C_*(Y|z)$.

Following Brown~\cite[Section~1]{B}, let $P(B)$
denote the space of Moore paths in $B$, with strictly associative
concatenation $\alpha\beta$ traversing $\alpha$ first and then $\beta$.
A lifting function for $p:E\to B$ is a continuous map
\[
\lambda:U_p\longrightarrow E,\qquad
U_p=\{(\alpha,x)\in P(B)\times E\mid \alpha(r)=p(x)\},
\]
where $r$ is the length of $\alpha$, such that
$p\lambda(\alpha,x)=\alpha(0)$.
The fibration equipped with $\lambda$ is called
\emph{weakly transitive} if $\lambda(e_{b_0},x)=x$ for
$x\in F=p^{-1}(b_0)$ and
\[
\lambda(\alpha\beta,x)=\lambda(\alpha,\lambda(\beta,x))
\]
whenever $\alpha(r)=\beta(0)=b_0$ and $(\beta,x)\in U_p$;
here $e_{b_0}$ is the length-zero constant path at $b_0$.
Consequently, the Moore loop space $\Omega B$ acts on $F$
by $\alpha\cdot x=\lambda(\alpha,x)$, inducing a
$C_*(\Omega B)$--module structure on $C_*(F)$.

\begin{theorem}[{\cite[Theorems 4.1 and 4.2]{B}}]\label{theorembr}
Let $p:E\to B$ be a weakly transitive fibration in the sense of Brown, where $(B,b_0)$ is path--connected, and set
\[
F=p^{-1}(b_0).
\]
Then there is a twisting cochain
\[
\varphi_p\in\mathrm{Hom}^{-1}_{\mathbb Z}\big(C_*(B|b_0),C_*(\Omega B)\big)
\]
which induces a twisted differential $\partial_{\varphi_p}$ on $C_*(B|b_0)\otimes C_*(F)$. It has the following properties.
\begin{itemize}
\item[(i)] If $T\in C_1(B|b_0)$ and $T_0$ is the constant $1$--simplex at $b_0$, then
\[
\varphi_p(T)=T-T_0,
\]
where $T$ and $T_0$ are viewed as $0$--simplices of the Moore loop space. The twisting cochain vanishes on constant simplices.

\item[(ii)] Brown constructs natural chain maps
\[
\chi_p:\big(C_*(B|b_0)\otimes C_*(F),\partial_{\varphi_p}\big)
\longrightarrow C_*(E|F)
\]
and
\[
\omega_p^{\mathrm{Br}}:C_*(E|F)
\longrightarrow
\big(C_*(B|b_0)\otimes C_*(F),\partial_{\varphi_p}\big)
\]
which are mutually inverse up to natural chain homotopy. Thus both maps are chain homotopy equivalences of the underlying chain complexes.

\item[(iii)] These constructions are natural with respect to morphisms of weakly transitive fibrations and their chosen lifting functions. $\qqed$
\end{itemize} 
\end{theorem}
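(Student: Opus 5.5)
This statement is Brown's theorem, and the plan is to recall Brown's proof (\cite[Sections 2--4]{B}) in the present conventions rather than invent a new one. The construction has two stages. First we build the twisting cochain $\varphi_p$ into $C_*(\Omega B)$ with the lowest-degree behaviour prescribed in (i). Then we build $\chi_p$ and $\omega_p^{\mathrm{Br}}$ by acyclic models, using weak transitivity to make the fibre action strictly associative.

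\textbf{Step 1: the twisting cochain.} We define $\varphi_p$ by induction on degree. On $C_0(B|b_0)$ it is zero. On $C_1(B|b_0)$ we set $\varphi_p(T)=T-T_0$, where a $1$--simplex with both endpoints at $b_0$ is read as a Moore loop, hence a $0$--simplex of $\Omega B$. Then $\varepsilon\varphi_p(T)=0$, and identity \eqref{tweq} holds in degree $1$.

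Suppose $\varphi_p$ has been defined below degree $q\geq2$. The right-hand side of \eqref{tweq}, namely $\varphi_{q-1}\partial-\sum_k(-1)^k\varphi_k\cup\varphi_{q-k}$, is a cycle of positive degree in $C_*(\Omega B)$. We choose $\varphi_q$ of a $q$--simplex $\sigma$ by acyclic models. The model is the space of Moore paths on $\Delta^q$ relative to its vertices: $\sigma$ induces a map from the Moore loop space of $\Delta^q$ (collapsing vertices) into $\Omega B$. That model space is contractible in each component, so the cycle bounds there, and we transport the chosen chain naturally. Degenerate and constant simplices are sent to $0$, which gives the last clause of (i).

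\textbf{Step 2: the maps $\chi_p$ and $\omega_p^{\mathrm{Br}}$.} We define $\chi_p$ on $\sigma\otimes\tau$ by induction on $|\sigma|$. In degree $0$ it is $\chi(b_0\otimes\tau)=\tau$. In general, the lifting function $\lambda$ transports $\tau$ along paths in $\sigma$, and we fill cycles in the contractible models $(\Delta^q\times\Delta^n)$, so that $\partial\chi=\chi\partial_{\varphi_p}$.

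Here weak transitivity is used: the relation $\lambda(\alpha\beta,x)=\lambda(\alpha,\lambda(\beta,x))$ makes $C_*(F)$ a strict $C_*(\Omega B)$--module. This is what lets the cap-product term $\sigma_{(1)}\otimes\varphi(\sigma_{(2)})\tau$ match the boundary of the transported simplex.

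The inverse $\omega_p^{\mathrm{Br}}$ is built the same way. We use models $\Delta^n\to E$ with vertices sent to $F$, and we use that the twisted tensor product functor is acyclic on these models, by a spectral-sequence comparison filtered by base degree. The $E^2$ terms are $H_*(B|b_0)\otimes H_*(F)$ with local coefficients, and they match.

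\textbf{Step 3: homotopies and naturality.} The homotopies $\chi\omega\simeq\mathrm{id}$ and $\omega\chi\simeq\mathrm{id}$ come from the uniqueness half of acyclic models, again by induction on degree. Naturality (iii) is automatic, because every choice is made on universal models and then transported. The main obstacle is the acyclicity of the models in Step 2 for $\omega$. I would handle it by comparing the filtration of $(C_*(B|b_0)\otimes C_*(F),\partial_{\varphi_p})$ with the Serre filtration of $C_*(E|F)$, and invoking Zeeman's comparison theorem. Brown's argument avoids a full comparison by using contractibility of the models' bases, so this is the step where I would follow \cite[Theorem 4.2]{B} closely.
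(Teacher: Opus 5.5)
Your outline follows Brown's overall architecture, which is all the paper relies on: it quotes the theorem, and later uses the models $\mathcal F_n$ and \eqref{equniversalacyclic}. However, the models you specify fail at exactly the points where the argument has content.

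\textbf{Step 2, the map $\chi_p$.} The identity $\partial\chi_p=\chi_p\partial_{\varphi_p}$ forces you to know $\chi_p$ on the twisting terms $\sigma_{(1)}\otimes\varphi_p(\sigma_{(2)})\tau$. Here $\varphi_p(\sigma_{(2)})$ is the transport of a chain of Moore loops in $B_q=\Delta^q/\{\text{vertices}\}$, acting on $\tau$ through $\lambda$. These loops pass through the collapsed point and wind around arbitrarily. Such chains are not supported on any map $\Delta^q\times\Delta^n\to E$ obtained by transporting $\tau$ along paths in $\sigma$. So the obstruction cycle does not lie in your model, and the model's contractibility is irrelevant. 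The model must carry the whole $\Omega B_q$--action. One choice is the fibration over $B_q$ with total space $\{\text{Moore paths in }B_q\text{ ending at }b_q\}\times\Delta^n$, fibre $\Omega B_q\times\Delta^n$, and concatenation as lifting function. It maps to $E$ by $(a,s)\mapsto\lambda(\bar\sigma a,\tau(s))$, where $\bar\sigma\colon B_q\to B$ is induced by $\sigma$. Weak transitivity is what makes this map compatible with the fibre actions. Contractibility of the total space makes its vertex--relative chains acyclic in positive degrees.

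\textbf{Step 1 at $q=2$.} The obstruction has degree $q-2=0$, not positive, and a $0$--cycle need not bound in a space with contractible components. You must check that its class in $H_0(\Omega B_2)\cong\mathbb Z[\pi_1B_2]$ vanishes. After the $T_0$ terms cancel, this class is $[T_{01}][T_{12}]-[T_{02}]$. It is zero because the $2$--simplex gives a homotopy of loops; this is where (i) and $\pi_1$ really enter.

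\textbf{Constant simplices.} Vanishing on constant simplices is not automatic under transport. A constant $q$--simplex carries the universal chain into chains on the constant Moore loops $\cong[0,\infty)$. The universal choice must be normalized so that this image is zero, for example by subtracting its image under the constant-loop section.

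\textbf{The acyclicity behind $\omega_p^{\mathrm{Br}}$.} You misidentify the mechanism here. The model is Brown's fibration $\mathcal F_n$, and its base $B_n$ is a wedge of $n$ circles, not contractible; the paper notes this after \eqref{equniversalacyclic}. What is contractible is the total space $X_n$. The fibre $F_n\simeq\Omega B_n$ has contractible components, so $H_*(F_n)=H_0(F_n)=\mathbb Z[\pi_1B_n]$. Filter $T_{\varphi_n}$ by base degree. By (i), $d^1$ is the local-coefficient boundary, so
\[
E^2=H_*(B_n;\mathbb Z[\pi_1B_n])\cong H_*(\widetilde{B_n}),
\]
which is concentrated in degree $0$ because $\widetilde{B_n}$ is contractible. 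This yields $H_q(T_{\varphi_n})=0$ for $q>0$.

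Your phrase ``$H_*(B|b_0)\otimes H_*(F)$ with local coefficients'' conflates two different things. The untwisted term $H_1(B_n)\otimes H_0(F_n)$ is nonzero, so acyclicity genuinely depends on the twisting. The Zeeman/Serre comparison you propose instead is unnecessary. It would also need a filtration-preserving comparison map, which would have to be $\chi_p$, and a Serre filtration on singular vertex--relative chains; you set up neither.
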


The inverse $\omega_p^{\mathrm{Br}}$ is constructed in Brown's proof by acyclic models. We shall use this construction in Section~\ref{secsfm}, where we refine the inverse so that it is strictly compatible with the Alexander--Whitney coaction. No such strict compatibility is asserted here for Brown's forward map $\chi_p$.

\subsection{Hurewicz Replacement of Serre Fibrations}\label{ssechurrep}
We shall apply Brown's construction to Serre fibrations. Since Brown's lifting-function framework requires the covering homotopy property for arbitrary spaces, we first recall the standard mapping-path replacement.

Let
\[
p:E\longrightarrow B
\]
be a continuous map. Define
\[
E^H:=\{(e,\gamma)\in E\times B^I\mid \gamma(0)=p(e)\}
\]
and
\[
p^H:E^H\longrightarrow B,
\qquad p^H(e,\gamma)=\gamma(1).
\]
There is a canonical map
\[
j:E\longrightarrow E^H,
\qquad j(e)=\bigl(e,c_{p(e)}\bigr),
\]
where $c_{p(e)}$ is the constant path at $p(e)$. Clearly $p^Hj=p$.

\begin{proposition}\label{prophurewiczrep}
The map $p^H:E^H\to B$ is a Hurewicz fibration, and $j:E\to E^H$ is an ordinary homotopy equivalence and a map over $B$.

If $p:E\to B$ is a Serre fibration, then for every $b\in B$ the induced map
\[
j_b:p^{-1}(b)\longrightarrow(p^H)^{-1}(b)
\]
is a weak homotopy equivalence. Consequently, for every field $\Bbbk$,
\[
(j_b)_*:H_*(p^{-1}(b);\Bbbk)
\xrightarrow{\cong}
H_*((p^H)^{-1}(b);\Bbbk)
\]
is an isomorphism.
\end{proposition}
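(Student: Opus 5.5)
The plan is to treat the two assertions separately, using only path-space constructions and the long exact sequence of homotopy groups.

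\emph{Step 1: $p^H$ is a Hurewicz fibration.} Given a space $X$, a homotopy $G\colon X\times I\to B$ and a lift $g_0=(e_0,\gamma_0)\colon X\to E^H$ of $G_0$, I would write down the lift explicitly as
\[
\tilde G(x,t)=\bigl(e_0(x),\ \gamma_0(x)\ast G(x,-)|_{[0,t]}\bigr),
\]
that is, the path $\gamma_0(x)$ followed by the track of $G$ up to time $t$. This path has to be reparametrized to lie in $B^I$. A standard continuous choice is to run $\gamma_0(x)$ on $[0,2/(2+t)]$ and $G(x,\cdot)$ on the remaining interval. One then checks directly that $\tilde G$ is continuous, that $p^H\tilde G=G$, and that $\tilde G_0=g_0$ after rescaling. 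Continuity of concatenation in the compact-open topology is the only point needing care, and it is routine because $I$ is locally compact.

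\emph{Step 2: $j$ is a homotopy equivalence over $B$.} The equality $p^Hj=p$ is immediate. As a homotopy inverse I would take $q\colon E^H\to E$, $q(e,\gamma)=e$, which satisfies $qj=\mathrm{id}_E$. For the other composite, contracting paths gives the homotopy
\[
H_s(e,\gamma)=\bigl(e,\ \gamma|_{[0,1-s]}\bigr)
\]
from $\mathrm{id}_{E^H}$ to $jq$, with each restricted path reparametrized to unit length. This is a genuine homotopy equivalence, although it is not fibrewise, which is all the statement asks.

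\emph{Step 3: fibres are weakly equivalent when $p$ is Serre.} Both $p$ and $p^H$ are Serre fibrations, since a Hurewicz fibration is in particular Serre. The map $j$ covers $\mathrm{id}_B$, so it induces a map between the long exact homotopy sequences of $p^{-1}(b)\to E\to B$ and $(p^H)^{-1}(b)\to E^H\to B$. For each basepoint in the fibre, this map is the identity on the $\pi_*(B)$ terms and an isomorphism on the $\pi_*(E)$ terms by Step 2. The five lemma then gives isomorphisms on $\pi_n$ of the fibres for $n\ge1$.

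The main obstacle is the low-degree end of the sequence, where $\pi_1$ and $\pi_0$ are not groups and the five lemma in its usual form fails. For $\pi_0$ I would argue directly, via the exact sequence of pointed sets together with the action of $\pi_1(B,b)$ on $\pi_0$ of the fibre. Injectivity and surjectivity of $\pi_0(j_b)$ follow from the standard extended five-lemma for the tail $\pi_1(E)\to\pi_1(B)\to\pi_0(F)\to\pi_0(E)\to\pi_0(B)$. Here one uses that elements of $\pi_0(F)$ with the same image in $\pi_0(E)$ differ by the action of $\pi_1(B)$. One must also vary the basepoint over every component of $(p^H)^{-1}(b)$; surjectivity on $\pi_0$ ensures each component is reached. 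This yields a weak equivalence $j_b$. The homology isomorphism with coefficients in $\Bbbk$ then follows because weak equivalences induce isomorphisms on singular homology.
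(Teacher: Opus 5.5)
Your proposal is correct and follows the same route as the paper: the mapping-path lift by concatenation, the retraction $q$ with $qj=\mathrm{id}_E$ and the path-shrinking homotopy $\mathrm{id}_{E^H}\simeq jq$, then a comparison of the long exact homotopy sequences of the two Serre fibrations over $\mathrm{id}_B$. You add details the paper leaves implicit: an explicit continuous reparametrization of the lift, and the extended five-lemma argument using the $\pi_1(B,b)$--action to get bijectivity on $\pi_0$ of the fibres. Note that only the $\pi_0$ end is delicate, since the ordinary five lemma already handles $\pi_n$ for all $n\geq 1$, including $\pi_1$.
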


\begin{proof}
The map $p^H$ is the standard mapping-path fibration: a path beginning at $p^H(e,\gamma)=\gamma(1)$ is lifted by concatenating it to $\gamma$. Let
\[
q:E^H\longrightarrow E,
\qquad q(e,\gamma)=e.
\]
Then $qj=\mathrm{id}_E$. Shrinking $\gamma$ to its initial point gives a homotopy from $\mathrm{id}_{E^H}$ to $jq$, so $j$ is an ordinary homotopy equivalence. Notice that $j$ itself is a map over $B$, although this homotopy need not be fibrewise.

Suppose now that $p$ is a Serre fibration. Since every Hurewicz fibration is a Serre fibration, $j$ gives a morphism of Serre fibrations over $\mathrm{id}_B$. Comparing the long exact homotopy sequences of the two fibrations shows that $j_b$ induces isomorphisms on all homotopy groups, together with the corresponding statement on path components. Hence $j_b$ is a weak homotopy equivalence. Weak homotopy equivalences induce isomorphisms in singular homology, which proves the last assertion.
\end{proof}

\subsection{Brown's Transitive Replacement}\label{ssecttransrep}
We next recall the replacement from Section~1 of \cite{B}. Let $p:E\to B$ be a Hurewicz fibration in Brown's lifting-function framework. Brown constructs, over the same base space $B$, a transitive fibration
\[
p^t:E^t\longrightarrow B
\]
and maps over $B$
\[
E\underset{r}{\stackrel{s}{\rightleftarrows}}E^t
\]
whose composites are homotopic to the respective identity maps through homotopies moving points only along fibres.

\begin{proposition}[{\cite[Section~1]{B}}]\label{proptransrep}
With the preceding notation, $p^t$ is transitive, the maps $s$ and $r$ are fibre homotopy inverses over $B$, and their restrictions to every fibre are homotopy equivalences. In particular, if $F=p^{-1}(b_0)$ and $F^t=(p^t)^{-1}(b_0)$, then
\[
s_F:F\longrightarrow F^t
\]
is a homotopy equivalence. $\qqed$
\end{proposition}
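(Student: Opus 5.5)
The plan is to use Brown's explicit construction and check each assertion directly, so that no abstract fibre-homotopy theory is needed. Write $\lambda\colon U_p\to E$ for a lifting function of the Hurewicz fibration $p$. Using the connection-type lifts available for a Hurewicz fibration, I would first arrange that $\lambda$ is normalized, $\lambda(e_{p(x)},x)=x$ for length-zero paths. This normalization is the one point where I am relying on Brown's Section~1 rather than proving it here: the standard Moore-path argument lifts a path of length $r$ by a path in $E$ ending at $x$ and lets the lift depend continuously on $r$, so that at $r=0$ it is the constant lift. Then set
\[
E^t:=U_p=\{(\alpha,x)\in P(B)\times E\mid \alpha(r)=p(x)\},\qquad p^t(\alpha,x)=\alpha(0).
\]
Here $P(B)$ carries the usual Moore-path topology, and concatenation is continuous and strictly associative.

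\emph{Step 1: transitivity.} Define a lifting function for $p^t$ by
\[
\lambda^t\bigl(\beta,(\alpha,x)\bigr)=(\beta\alpha,x),
\]
which is defined whenever $\beta$ ends at $\alpha(0)$. Then $p^t\lambda^t(\beta,(\alpha,x))=\beta(0)$. The relations $\lambda^t(e_b,-)=\mathrm{id}$ and $\lambda^t(\beta\gamma,-)=\lambda^t(\beta,\lambda^t(\gamma,-))$ follow at once from strict associativity and unitality of Moore concatenation. In particular they hold for all composable paths, not only for loops at $b_0$. Thus $p^t$ is transitive, and hence weakly transitive. The existence of $\lambda^t$ also exhibits $p^t$ as a fibration in Brown's lifting-function framework.

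\emph{Step 2: the maps.} Put
\[
s(x)=(e_{p(x)},x),\qquad r(\alpha,x)=\lambda(\alpha,x).
\]
Both are continuous maps over $B$, since $p\lambda(\alpha,x)=\alpha(0)$. Normalization gives $rs=\mathrm{id}_E$ on the nose. For the other composite, $sr(\alpha,x)=(e_{\alpha(0)},\lambda(\alpha,x))$. I would use the homotopy
\[
H_t(\alpha,x)=\bigl(\alpha|_{[0,tr]},\ \lambda(\alpha|_{[tr,r]},x)\bigr),\qquad t\in[0,1],
\]
where $\alpha|_{[tr,r]}$ is reparametrized to start at time $0$. At $t=1$ this is $(\alpha,\lambda(e,x))=(\alpha,x)$, and at $t=0$ it is $sr(\alpha,x)$. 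Every stage lies over $\alpha(0)$, so $H$ is a homotopy over $B$.

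\emph{Step 3: fibres, and the main obstacle.} Since $s$, $r$ and the homotopies all commute with the projections, restricting to $p^{-1}(b_0)$ and $(p^t)^{-1}(b_0)$ gives maps and homotopies between the fibres. Hence $s_F\colon F\to F^t$ is a homotopy equivalence with inverse $r_{F^t}$, and likewise over every $b\in B$. The step I expect to be the real obstacle is point-set in nature: proving continuity of the restriction-and-reparametrization maps $(\alpha,t)\mapsto\alpha|_{[0,tr]}$ and $\alpha|_{[tr,r]}$ in the Moore-path topology, including the degenerate case $r=0$, together with the normalization of $\lambda$. If normalization cannot be arranged, I would instead show $rs\simeq\mathrm{id}$ fibrewise by the analogous contraction of the constant path of length $t\cdot 0$ combined with a lift-of-homotopy argument. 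I would first try the normalized version, as in Brown.
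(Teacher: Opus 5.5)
Your argument is correct, and it is the standard Moore-path construction behind Brown's Section~1, which the paper cites without proof: $E^t=U_p$ with $p^t(\alpha,x)=\alpha(0)$, concatenation as the transitive lifting function, $s(x)=(e_{p(x)},x)$, $r=\lambda$, and a fibrewise homotopy that slides the splitting point along $\alpha$. The paper's universal model $X_n$ uses the same concatenation construction, applied to $q_n$. The normalization you rely on is easily arranged, so your fallback is unnecessary: take a unit-interval lifting function $\Lambda$ of the Hurewicz fibration, with $\Lambda(x,\omega)$ a path starting at $x$ and covering $\omega$ where $\omega(0)=p(x)$, and set $\lambda(\alpha,x)=\Lambda\bigl(x,\omega_\alpha\bigr)(\min(r,1))$ with $\omega_\alpha(t)=\alpha\bigl(\max(r-\max(r,1)t,0)\bigr)$; this is continuous, lies over $\alpha(0)$, and satisfies $\lambda(e_{p(x)},x)=x$.
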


Thus a Serre fibration may first be replaced by the Hurewicz fibration of Proposition~\ref{prophurewiczrep} and then by Brown's transitive replacement. We shall transport the resulting chain-level model back to the original fibration in Section~\ref{secsfm}.

\section{Dualized Twisted Tensor Products}\label{secdttp}	
Throughout this section, the ground ring is a field $\Bbbk$ of arbitrary characteristic unless otherwise stated. We fix a coaugmented $\Bbbk$--dgc
\[
(K,\partial_K,\Delta_K,\varepsilon_K,1_K),
\]
an augmented $\Bbbk$--$\partial$ga
\[
(A,\partial_A,\mu_A,\varepsilon_A),
\]
and a $\Bbbk$--dgc
\[
(L,\partial_L,\Delta_L,\varepsilon_L)
\]
which is also an augmented differential graded left $A$--module. Thus the action map
\[
\mu:A\otimes L\longrightarrow L
\]
is a morphism of augmented complexes; in particular it is compatible with the differentials and satisfies
\[
\varepsilon_L(al)=\varepsilon_A(a)\varepsilon_L(l).
\]
No compatibility between the $A$--action and the coproduct $\Delta_L$ is assumed beyond the stated hypotheses. We use multiplicative notation for the $A$--action, writing $al$ for $\mu(a\otimes l)$.

	Fixing a twisting cochain $\phi\in \mathrm{Hom}^{-1}_{\Bbbk}(K,A)$, we obtain a twisted tensor product $(K\otimes L, \partial_\phi)$ as in Section~\ref{ssecttp}. Its graded dual is the central object we study in this section: 
	\begin{definition}\label{defdual} Let $R=\mathbb{Z}$ or $\Bbbk$,
		\begin{itemize}
			\item[(i)] For a chain complex $(C,\partial)$ over $R$, let
			\[(C^\vee)^n:={\mathrm{Hom}}_R(C_n,\Bbbk)\]
			denote its $\Bbbk$--coefficient graded dualization. The dualized differential $\delta$ on $C^\vee$ is defined by
			\[
			\delta(f):=(-1)^{|f|}f\circ \partial.
			\]
			so that $(C^\vee,\delta)$ is a cochain complex over $\Bbbk$.
			\item[(ii)] For the twisted tensor product $(K\otimes L,\partial_\phi)$ over $\Bbbk$, its graded dualization $\big((K\otimes L)^\vee,\delta_\phi\big)$ is called a {\it dualized twisted tensor product (over $\Bbbk$)}. The dualized differential $\delta_\phi$ is called the {\it dualized twisted differential} induced by $\phi$.
		\end{itemize}
	\end{definition}
\begin{remark}[Signed cochain convention]\label{rmksignedcochains}
Throughout the paper, $C^*(X;\Bbbk)$ denotes the degreewise dual complex with the differential of Definition~\ref{defdual} and the product dual to the Alexander--Whitney coproduct through the canonical Koszul map. If $C^*_{\mathrm{us}}(X;\Bbbk)$ denotes the usual singular cochain dga with the usual coboundary and Alexander--Whitney cup product, then
\[
J_X:C^n(X;\Bbbk)\longrightarrow C^n_{\mathrm{us}}(X;\Bbbk),
\qquad
J_X(f)=(-1)^{n(n-1)/2}f,
\]
is a natural dga isomorphism. Thus our signed convention is equivalent to the usual singular cochain convention. In particular, when we refer to the standard $C^*(B;\Bbbk)$--module structure on $C^*(E;\Bbbk)$ induced by $p^\sharp$, it is understood in this convention, or equivalently through the natural isomorphisms $J_B$ and $J_E$.
\end{remark}

This section develops four aspects of the dualized twisted tensor product $\big((K\otimes L)^\vee,\delta_\phi\big)$. The first subsection establishes its differential graded $(K^\vee,\delta_K)$--module structure. The second recalls Brown's replacement of $L$ by $H_*(L)$. The third uses the finite-type hypothesis on $H_*(L)$ to identify $(K\otimes H_*(L))^\vee$ with $K^\vee\otimes H^*(L)$ and derives an explicit formula for $\delta_\psi$. The final subsection gives a sufficient condition for $\big(K^\vee\otimes H^*(L),\delta_\psi\big)$ to be semifree.

	\subsection{Module Structure of Dualized Twisted Tensor Products}\label{ssecmodstr}
    With the fixed data, we obtain two $\Bbbk$--dgas $(K^{\vee}, \delta_K, \Delta^\vee_K)$ and $(L^{\vee},\delta_L,\Delta^\vee_L)$, as well as the dualized twisted tensor product $\big((K\otimes L)^\vee, \delta_\phi\big)$, regarded as a cochain complex over $\Bbbk$. 
    
    Here we denote the composition $K^\vee\otimes K^\vee \hookrightarrow (K\otimes K)^\vee \stackrel{\Delta^\vee_K}\longrightarrow K^\vee$ again by $\Delta^\vee_K$ and use the same convention for $L$ and other dgcs.

	The tensor coalgebra structure on $K\otimes L$ induces a natural unital graded $\Bbbk$--algebra structure on $(K\otimes L)^\vee$. The twisted differential need not be a derivation for this full multiplication; what we need is the following differential graded $K^\vee$--module structure.

\begin{lemma}\label{modlem}
The dualized twisted tensor product
\[
\big((K\otimes L)^\vee,\delta_\phi\big)
\]
is a differential graded $(K^\vee,\delta_K)$--module. Moreover, the morphism
\[
\begin{aligned}
\iota:K^\vee&\longrightarrow(K\otimes L)^\vee,\\
b&\longmapsto b\otimes\varepsilon_L
\end{aligned}
\]
is a morphism of cochain complexes, and the $K^\vee$--module structure on $(K\otimes L)^\vee$ agrees with left multiplication through $\iota$.
\end{lemma}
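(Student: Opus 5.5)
The $K^\vee$--module structure will be defined by dualizing a left $K$--coaction on $(K\otimes L,\partial_\phi)$. Set
\[
\rho:=\Delta_K\otimes\mathrm{id}_L\colon K\otimes L\longrightarrow K\otimes K\otimes L.
\]
For $b\in K^\vee$ and $f\in(K\otimes L)^\vee$ put
\[
b\cdot f:=(b\otimes f)\circ\rho,
\]
with the canonical Koszul sign, exactly as in the definition of the product $\Delta_K^\vee$ on $K^\vee$. Associativity and unitality of this action follow formally from coassociativity and counitality of $\Delta_K$. The unit of $K^\vee$ is $\varepsilon_K$, and $(\varepsilon_K\otimes\mathrm{id})\rho=\mathrm{id}$ gives $\varepsilon_K\cdot f=f$.

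The substantive point is that $\rho$ is a chain map
\[
(K\otimes L,\partial_\phi)\longrightarrow \bigl(K\otimes(K\otimes L),\partial_K\otimes1+1\otimes\partial_\phi\bigr),
\]
that is, that $(K\otimes L,\partial_\phi)$ is a left dg $K$--comodule. Once this holds, dualizing gives the Leibniz rule
\[
\delta_\phi(b\cdot f)=\delta_K b\cdot f\pm b\cdot\delta_\phi f,
\]
because the dual of a chain map into a tensor product, precomposed with the Koszul map $K^\vee\otimes(K\otimes L)^\vee\to(K\otimes K\otimes L)^\vee$, is compatible with the sign convention $\delta f=(-1)^{|f|}f\partial$ of Definition~\ref{defdual}. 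This is the same argument that makes $K^\vee$ a dga.

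To verify the comodule property I split $\partial_\phi$ into its two pieces.

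\emph{Untwisted part.} The equality
\[
\rho\,\partial_\otimes=(\partial_K\otimes1+1\otimes\partial_\otimes)\rho
\]
holds because $\Delta_K$ is a chain map.

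\emph{Twisting term.} I need
\[
(\Delta_K\otimes1)\bigl((-1)^{|k|}(k\otimes l)\cap\phi\bigr)=(1\otimes \cap\phi)\text{-term applied to }\rho(k\otimes l).
\]
Writing $\Delta^{(2)}k=\sum k_{(1)}\otimes k_{(2)}\otimes k_{(3)}$, the left side is $\sum (-1)^{|k|}k_{(1)}\otimes k_{(2)}\otimes\phi(k_{(3)})l$ by coassociativity. On the right side, the term $(1\otimes\partial_\phi)$ acting on $k_{(1)}\otimes(k_{(2)}\otimes l)$ carries the Koszul sign $(-1)^{|k_{(1)}|}$ and then the twisting sign $(-1)^{|k_{(2)}|}$, for a total of $(-1)^{|k_{(1)}|+|k_{(2)}|}$. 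Since $|\phi(k_{(3)})|=|k_{(3)}|-1$, Brown's elementwise convention \eqref{eqcapconvention} inserts no further sign. The remaining discrepancy is $(-1)^{|k_{(3)}|}$.

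This sign bookkeeping is the step I expect to be the main obstacle. I will handle it by noting that the terms with $|k_{(3)}|$ odd should cancel or be absorbed once the Koszul sign implicit in ``no additional Koszul sign'' is tracked correctly, or by checking Brown's convention directly. If the plain statement fails, I would instead replace $\rho$ by a signed version $k\otimes l\mapsto\sum(-1)^{\epsilon}k_{(1)}\otimes k_{(2)}\otimes l$ and verify that it is still coassociative.

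For the claim about $\iota$, the map $b\mapsto b\otimes\varepsilon_L$ is the dual of
\[
\mathrm{id}_K\otimes\varepsilon_L\colon K\otimes L\to K.
\]
I must check that this map intertwines $\partial_\phi$ with $\partial_K$. The $\partial_L$ term dies because $\varepsilon_L$ is a chain map and $L_{-1}=0$. The twisting term dies because $\varepsilon_L(\phi(k_{(2)})l)=\varepsilon_A(\phi(k_{(2)}))\varepsilon_L(l)$, and this vanishes: $\varepsilon_A$ is zero in positive degrees, and in degree $0$ we have $\varepsilon_A\phi_1=0$.

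Finally, $\iota(b)\cdot$ agrees with $b\cdot$. Since $\mathrm{id}\otimes\varepsilon_L$ is a map of comodules, the relation $b\cdot\iota(1)=\iota(b)$ holds. Identifying the product on $(K\otimes L)^\vee$ with the dual of the tensor coproduct, $\iota(b)\cdot f=(b\otimes\varepsilon_L)f$ unwinds, via counitality of $\Delta_L$, to $(b\otimes f)\rho$.
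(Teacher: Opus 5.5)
Your strategy is the paper's: define the action by dualizing $\rho=\Delta_K\otimes\mathrm{id}_L$ through the Koszul map, reduce the lemma to $\rho$ being a chain map, and treat $\iota$ as the dual of $\mathrm{id}_K\otimes\varepsilon_L$. The gap is in the step you call substantive. Your sign computation for the twisting term is wrong, so you never establish that $\rho$ is a chain map; you leave it to speculation and a fallback.

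The error comes from mixing two Sweedler labelings. On the right-hand side, $1\otimes\partial_\phi$ acts on $k_{(1)}\otimes(k'\otimes l)$, where $\Delta_K(k)=\sum k_{(1)}\otimes k'$. The twisting sign in $\partial_\phi(k'\otimes l)$ is $(-1)^{|k'|}$, the degree of the whole $K$--factor \emph{before} the cap product splits it. After expanding $\Delta_K(k')=\sum k_{(2)}\otimes k_{(3)}$ by coassociativity, $|k'|=|k_{(2)}|+|k_{(3)}|$, not $|k_{(2)}|$. The total sign is therefore $(-1)^{|k_{(1)}|+|k_{(2)}|+|k_{(3)}|}=(-1)^{|k|}$. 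This matches the left-hand side $(-1)^{|k|}\sum k_{(1)}\otimes k_{(2)}\otimes\phi(k_{(3)})l$, where no extra sign appears because $\Delta_K$ has degree $0$. So there is no discrepancy $(-1)^{|k_{(3)}|}$. This is exactly the paper's observation $(-1)^{|k_{(1)}|}(-1)^{|k_{(2)}|}=(-1)^{|k|}$ in two-fold notation.

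Your fallback would not rescue the lemma as stated. The final assertion is $b\cdot f=\iota(b)f$, with the product on $(K\otimes L)^\vee$ dual to the tensor coproduct. By counitality of $\Delta_L$, this forces the action to be the dual of the unsigned $\Delta_K\otimes\mathrm{id}_L$. A re-signed coaction would in general give a different action and break that claim.

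Once the sign is corrected, the rest of your argument goes through as in the paper:
\begin{itemize}
\item the Leibniz rule follows because the Koszul map $K^\vee\otimes(K\otimes L)^\vee\to(K\otimes(K\otimes L))^\vee$ is a cochain map, and the sign is precisely $(-1)^{|b|}$;
\item $\iota$ is a cochain map because $\varepsilon_L\partial_L=0$ and $\varepsilon_A\phi=0$;
\item the action agrees with left multiplication through $\iota$ by counitality of $\Delta_L$.
\end{itemize}
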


\begin{proof}
Consider the $K$--coaction
\[
\rho:=\Delta_K\otimes\mathrm{id}_L:
\big(K\otimes L,\partial_\phi\big)
\longrightarrow
(K,\partial_K)\otimes\big(K\otimes L,\partial_\phi\big).
\]
We first show that $\rho$ is a chain morphism. For homogeneous $k\in K$ and $l\in L$, write
\[
\Delta_K(k)=\sum k_{(1)}\otimes k_{(2)}.
\]
By Definition~\ref{deftdiff},
\[
(k\otimes l)\cap\phi
=
\sum k_{(1)}\otimes\phi(k_{(2)})l.
\]
Coassociativity gives
\[
(\Delta_K\otimes\mathrm{id}_L)\big((k\otimes l)\cap\phi\big)
=
(\mathrm{id}_K\otimes(-\cap\phi))(\Delta_K\otimes\mathrm{id}_L)(k\otimes l).
\]
The untwisted terms in
\[
\partial_{K\otimes(K\otimes L)}\rho(k\otimes l)
\quad\text{and}\quad
\rho\partial_\phi(k\otimes l)
\]
agree because $\Delta_K$ is a chain morphism. For the twisted terms, if $\Delta_K(k)=\sum k_{(1)}\otimes k_{(2)}$, then
\[
(-1)^{|k_{(1)}|}(-1)^{|k_{(2)}|}=(-1)^{|k|},
\]
and the preceding coassociativity identity shows that the twisted terms also agree. Hence $\rho$ is a chain morphism.

By our Koszul convention, the canonical morphism
\[
K^\vee\otimes(K\otimes L)^\vee
\longrightarrow
\big(K\otimes(K\otimes L)\big)^\vee
\]
is given on homogeneous elements by
\[
(b\otimes f)(k\otimes x)=(-1)^{|f||k|}b(k)f(x).
\]
Composing this morphism with
\[
\rho^\vee:\big(K\otimes(K\otimes L)\big)^\vee\longrightarrow(K\otimes L)^\vee
\]
defines a left $K^\vee$--module structure on $(K\otimes L)^\vee$. Since both maps are morphisms of cochain complexes, for homogeneous $b\in K^\vee$ and $f\in(K\otimes L)^\vee$ we obtain
\[
\delta_\phi(b\cdot f)
=
\delta_K(b)\cdot f+(-1)^{|b|}b\cdot\delta_\phi(f).
\]
Thus $\big((K\otimes L)^\vee,\delta_\phi\big)$ is a differential graded $(K^\vee,\delta_K)$--module.

It remains to consider $\iota$. Since the $A$--action on $L$ preserves the augmentation of the underlying complex,
\[
\varepsilon_L(al)=\varepsilon_A(a)\varepsilon_L(l),
\]
and $\varepsilon_A\phi=0$. Hence
\[
(\mathrm{id}_K\otimes\varepsilon_L)\big((k\otimes l)\cap\phi\big)=0.
\]
Together with $\varepsilon_L\partial_L=0$, this gives
\[
(\mathrm{id}_K\otimes\varepsilon_L)\partial_\phi
=
\partial_K(\mathrm{id}_K\otimes\varepsilon_L).
\]
Therefore
\[
\iota:K^\vee\longrightarrow(K\otimes L)^\vee,
\qquad b\longmapsto b\otimes\varepsilon_L,
\]
is a morphism of cochain complexes. Finally, the tensor coproduct and the counit identity for $L$ show that the above $K^\vee$--action agrees with left multiplication through $\iota$:
\[
b\cdot f=\iota(b)f.
\]
This finishes the proof.
\end{proof}
	\subsection{\texorpdfstring{Replacing $L$ by $H_*(L)$}{Replacing L by H(L)}}\label{ssecrep}
	In this subsection, we present a variant of Theorem~\ref{theoremb2} to model $\big((K\otimes L)^\vee, \delta_\phi\big)$ by another dualized twisted tensor product $\big((K\otimes H_*(L))^\vee, \delta_\psi\big)$, and briefly introduce Brown's inductive construction of the twisting cochain $\psi$ and the chain homotopy equivalence $\theta$. 
	Under the additional assumption that $H_*(L)$ is of finite type, we will rewrite the dualized twisted tensor product $\big((K\otimes H_*(L))^\vee, \delta_\psi\big)$ as $\big(K^{\vee}\otimes H^*(L), \delta_\psi\big)$. 
	
	In what follows, we fix the natural isomorphism
		\[
		H^*(L)\cong H_*(L)^\vee
		\]
	given by the Universal Coefficient Theorem, and identify these two complexes.

	\begin{theorem}[{Brown \cite[Theorem~9.1]{B}, in the present conventions]}]\label{theoremb2'}
Let $(K\otimes L,\partial_\phi)$ be a twisted tensor product over $\Bbbk$, and let $\mathcal E(L)$ be as in Definition~\ref{defcalE}. Then there is a degree $-1$ morphism
\[
\psi\in\mathrm{Hom}^{-1}_\Bbbk(K,\mathcal E(L))
\]
satisfying the twisting-cochain identity and inducing a twisted differential $\partial_\psi$ on $K\otimes H_*(L)$, together with a chain homotopy equivalence
\[
\theta:(K\otimes L,\partial_\phi)\longrightarrow(K\otimes H_*(L),\partial_\psi)
\]
of augmented $\Bbbk$--complexes. $\qqed$
\end{theorem}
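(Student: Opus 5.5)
Theorem~\ref{theoremb2'} is the field-coefficient version of Theorem~\ref{theoremb2}, and I would prove it by reduction to that theorem with $\Lambda=\Bbbk$. A field is a principal ideal ring, and every $\Bbbk$--module is free. Hence $K$, $L$ and $H_*(L)$ are automatically $\Lambda$--free, and every hypothesis of Theorem~\ref{theoremb2} holds. The remaining work is to check that Brown's output, stated in his conventions, is what is asserted here. The four things to check are that $\psi$ lands in $\mathcal E(L)$ as in Definition~\ref{defcalE}, that it satisfies \eqref{tweq}, that $\partial_\psi$ squares to zero with the cap convention \eqref{eqcapconvention}, and that $\theta$ respects augmentations.

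First, I would recall Brown's inductive construction briefly, since it is also needed in Remark~\ref{rmkbr}. Over a field, choose a splitting $L\simeq H_*(L)$ by a chain homotopy equivalence: a projection $\pi\colon L\to H_*(L)$, an inclusion $\sigma\colon H_*(L)\to L$ of cycle representatives, and a homotopy $h$ with $\sigma\pi-\mathrm{id}=\partial h+h\partial$. Then build $\psi_q$ and $\theta$ by induction on the $K$--degree $q$. At stage $q$ the obstruction to extending the twisting-cochain identity is an element of $\mathrm{Hom}(K_q,\mathcal E(L)_{q-2})$ built from lower $\psi_k$ and $\phi$. Exactness of $L$ modulo its homology lets one kill it via $h$, and this defines $\psi_q=\pi\circ(\text{correction})\circ\sigma$.

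Since $\psi(K_0)=0$ and all complexes are nonnegatively graded, every $\psi_q$ raises $H_*(L)$-degree by $q-1\ge 0$. So $\psi$ lies in the nonnegative-degree algebra $\mathcal E(L)$. No augmentation of $\mathcal E(L)$ is needed, matching the extended sense of twisting cochain adopted above.

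Next, I would verify that \eqref{tweq} gives $\partial_\psi^2=0$ with the sign $(-1)^{|k|}$ in Definition~\ref{deftdiff}. This is the same computation as for $\partial_\phi$, using coassociativity of $\Delta_K$ and associativity of composition in $\mathcal E(L)$. For augmentations, Brown's construction forces $\psi_1(k)$ to act trivially on the component relevant to $\varepsilon$. Then $(\varepsilon_K\otimes\varepsilon_{H})\partial_\psi=0$, where $\varepsilon_H$ is induced by $\varepsilon_L$, and $\theta$ is augmentation-preserving because $\theta$ restricted to $K_0\otimes L$ is $\mathrm{id}\otimes\pi$.

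I expect the main obstacle to be sign bookkeeping. Brown's sign conventions for $\cup$ and $\cap$ must be matched exactly with \eqref{eqcapconvention} and the cup product of Definition~\ref{deftch}. My plan is to trust Brown's formulas as written in \cite{B}, noting that Definitions~\ref{deftch} and~\ref{deftdiff} were chosen to reproduce them. If there is a discrepancy, I would correct it by rescaling $\psi_q$ by $(-1)^{q(q-1)/2}$ and check that this transforms one identity into the other.
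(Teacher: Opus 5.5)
Your proposal is correct and matches the paper, which gives no separate argument: it treats Theorem~\ref{theoremb2'} as the case $\Lambda=\Bbbk$ of Brown's result (Theorem~\ref{theoremb2}, whose principal-ideal-ring and freeness hypotheses hold automatically over a field) and recalls the construction of $\psi$ and $\theta$ in Remark~\ref{rmkbr}, so your additional convention checks are already part of the cited statement. One small correction to your sketch of Brown's induction: since $\mathcal E(L)$ has zero differential, the degree-$q$ instance of \eqref{tweq} involves only $\psi_{<q}$ and cannot be ``killed'' by choosing $\psi_q$; it has to come out of the simultaneous inductive definition of $\psi$ and $\theta'$ in Remark~\ref{rmkbr}.
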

	
	To describe explicitly how $\delta_\psi$ is obtained from Brown's homology replacement, we recall his inductive construction of $\psi$ and $\theta$ and, when $H_*(L)$ is of finite type, derive an explicit formula for the dualized twisted differential.
	
	\begin{remark}[{Brown's explicit construction of $\psi$ and $\theta$ in Section~9 of \cite{B}}]\label{rmkbr}
		The algebraic hypotheses ensure that there exists a surjective morphism $\eta : L\rightarrow H_*(L)$ extending the canonical projection $Z_*(L)\rightarrow H_*(L)$ from cycles to homology. Moreover, $\eta$ admits a section $\xi:H_*(L)\rightarrow L$ together with a chain homotopy $h:L\rightarrow L$ satisfying
		\[
		\partial_L h + h\partial_L = \xi\eta - \mathrm{id}_L.
		\]
		
		With these data, we construct inductively a twisting cochain $\psi$ and a degree $0$ morphism $\theta':K\otimes L\rightarrow H_*(L)$.
		
		We first define $\psi$ and $\theta'$ on $K_0\otimes L$ by setting $\psi(K_0)=0$ and
		\[
		\theta'(k\otimes l):=\varepsilon_K(k)\eta(l).
		\]
		Assume that $\psi$ is already defined on $K_{\leq p}$ and that $\theta'$ is already defined on $K_{\leq p}\otimes L$. Then for any $k\in K_{p+1}$, $x\in H_*(L)$, and $l\in L$, we define
		\begin{equation}\label{eqpsi}
			\begin{aligned}
				\psi(k)x :=\;& (-1)^{p+1}\theta'\Big(\partial_Kk\otimes \xi(x)+(-1)^{p+1}\big(k\otimes \xi(x)\big)\cap \phi\Big) \\
				&-(-1)^{p+1}\nu(\Delta'_K\otimes \mathrm{id}_L)\big(k\otimes \xi(x)\big)
			\end{aligned}
		\end{equation}
		and
		\[
		\begin{aligned}
			\theta'(k\otimes l) :=\;& (-1)^{p+1}\theta'\Big(\partial_Kk\otimes h(l)+(-1)^{p+1}\big(k\otimes h(l)\big)\cap \phi\Big) \\
			&-(-1)^{p+1}\nu(\Delta'_K\otimes \mathrm{id}_L)\big(k\otimes h(l)\big),
		\end{aligned}
		\]
		where $\Delta'_K:=\Delta_K-(\mathrm{id}_K\otimes1_K)$, and $\nu$ is defined by
		\[
		\begin{aligned}
			\nu:K_{\leq p}\otimes K_{\leq p}\otimes L &\longrightarrow H_*(L),\\
			k_1\otimes k_2\otimes l &\mapsto (-1)^{|k_1|}\psi(k_1)\theta'(k_2\otimes l).
		\end{aligned}
		\]
		Note that when verifying the well-definedness of $\nu(\Delta'_K\otimes \mathrm{id}_L)$ on $K_{p+1}\otimes L$, one may need the fact that the initial condition $\psi(K_0)=0$ forces $\psi\otimes \theta'$ to vanish on $K_0\otimes K_{p+1}\otimes L$.
		Finally, define $\theta$ as the composite
		\[
		\theta : K\otimes L \xrightarrow{\Delta_K \otimes \mathrm{id}_L} K\otimes K\otimes L \xrightarrow{\mathrm{id}_K \otimes \theta'} K\otimes H_*(L).
		\]
		Brown proves that this construction is well defined; we refer to Section~9 of \cite{B} for the verification.
	\end{remark}
	
	\begin{lemma}\label{lemtheta}
The chain homotopy equivalence
\[
\theta:\big(K\otimes L,\partial_\phi\big)
\longrightarrow
\big(K\otimes H_*(L),\partial_\psi\big)
\]
is a morphism of $K$--comodules. Consequently, its dual
\[
\theta^\vee:\big(K\otimes H_*(L)\big)^\vee
\longrightarrow
(K\otimes L)^\vee
\]
is a quasi--isomorphism of differential graded $(K^\vee,\delta_K)$--modules.
\end{lemma}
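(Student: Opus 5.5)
The plan is to read the comodule property directly off the shape of Brown's formula $\theta=(\mathrm{id}_K\otimes\theta')(\Delta_K\otimes\mathrm{id}_L)$ from Remark~\ref{rmkbr}. Both sides carry the left $K$--coaction $\rho=\Delta_K\otimes\mathrm{id}$ used in the proof of Lemma~\ref{modlem}: on $K\otimes L$ it is $\Delta_K\otimes\mathrm{id}_L$, and on $K\otimes H_*(L)$ it is $\Delta_K\otimes\mathrm{id}_{H_*(L)}$. Since the tensor factors are moved past $\theta'$, which has degree $0$, no Koszul signs appear. Coassociativity then gives
\[
(\Delta_K\otimes\mathrm{id})\,\theta
=(\Delta_K\otimes\theta')(\Delta_K\otimes\mathrm{id}_L)
=(\mathrm{id}_K\otimes\mathrm{id}_K\otimes\theta')(\mathrm{id}_K\otimes\Delta_K\otimes\mathrm{id}_L)(\Delta_K\otimes\mathrm{id}_L)
=(\mathrm{id}_K\otimes\theta)\,\rho .
\]
Hence $\theta$ is a morphism of graded $K$--comodules. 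By Theorem~\ref{theoremb2'} it is also a chain map, so it is a morphism of dg comodules.

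The same computation as in the proof of Lemma~\ref{modlem}, with $\psi$ in place of $\phi$ and $H_*(L)$ in place of $L$ as an $\mathcal E(L)$--module, shows that $\rho$ is a chain map on $(K\otimes H_*(L),\partial_\psi)$. That computation uses only coassociativity and the sign identity $(-1)^{|k_{(1)}|+|k_{(2)}|}=(-1)^{|k|}$. It follows that $\big((K\otimes H_*(L))^\vee,\delta_\psi\big)$ is a dg $(K^\vee,\delta_K)$--module, with action given by the composite of the Koszul map $K^\vee\otimes(-)^\vee\to(K\otimes -)^\vee$ and $\rho^\vee$. Note that the module structure is defined through $\rho$ itself, not through $\iota$, so no augmentation of $\mathcal E(L)$ is needed.

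Next, $\theta^\vee$ is $K^\vee$--linear. For $b\in K^\vee$ and $f\in(K\otimes H_*(L))^\vee$ we have
\[
\theta^\vee(b\cdot f)=\rho^\vee(b\otimes f)\circ\theta
=(b\otimes f)\circ(\mathrm{id}_K\otimes\theta)\circ\rho
=\rho^\vee\big(b\otimes\theta^\vee f\big)
=b\cdot\theta^\vee(f).
\]
The only point to check is that the Koszul sign $(-1)^{|f||k|}$ is unchanged, which holds because $\theta$ has degree $0$. The dual of a chain map is a cochain map under the convention $\delta(f)=(-1)^{|f|}f\circ\partial$, since $\theta$ preserves degree.

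Finally, $\theta$ is a chain homotopy equivalence with inverse $\theta^{-1}$ and homotopies $s$. Dualizing gives cochain maps $(\theta^{-1})^\vee$ and degree $-1$ maps $\pm s^\vee$, with signs adjusted to the convention of Definition~\ref{defdual}, which show that $\theta^\vee$ is a cochain homotopy equivalence over $\Bbbk$. Alternatively, since $\Bbbk$ is a field, degreewise duality is exact, so $H(C^\vee)\cong H(C)^\vee$ and $\theta^\vee$ induces the dual of an isomorphism. Either way $\theta^\vee$ is a quasi-isomorphism. Because quasi-isomorphism is a property of the underlying complexes, this finishes the argument. The main obstacle I expect is the sign bookkeeping: checking that the $K^\vee$--action defined through the Koszul map is strictly compatible with $\theta^\vee$, and that $\rho$ commutes with the twisted terms of $\partial_\psi$. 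Both should reduce to the coassociativity identity above and the fact that $\theta$ has degree $0$.
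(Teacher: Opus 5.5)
Your proposal is correct and follows the paper's proof. The comodule property of $\theta=(\mathrm{id}_K\otimes\theta')(\Delta_K\otimes\mathrm{id}_L)$ comes from coassociativity, and $K^\vee$--linearity and the quasi--isomorphism property of $\theta^\vee$ follow by dualizing with the coaction-induced module structure of Lemma~\ref{modlem}; your remark that only the chain-map property of $\rho=\Delta_K\otimes\mathrm{id}$ is needed on $(K\otimes H_*(L),\partial_\psi)$, rather than the $\iota$-part that would require an augmentation of $\mathcal E(L)$, usefully clarifies the paper's direct citation of that lemma.
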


\begin{proof}
Recall from Remark~\ref{rmkbr} that
\[
\theta=(\mathrm{id}_K\otimes\theta')(\Delta_K\otimes\mathrm{id}_L).
\]
The $K$--comodule structures on $K\otimes L$ and $K\otimes H_*(L)$ are induced by $\Delta_K$ on the first factor. We claim that
\[
(\Delta_K\otimes\mathrm{id}_{H_*(L)})\theta
=
(\mathrm{id}_K\otimes\theta)(\Delta_K\otimes\mathrm{id}_L).
\]
For homogeneous $k\in K$ and $l\in L$, write
\[
(\Delta_K\otimes\mathrm{id}_K)\Delta_K(k)
=
(\mathrm{id}_K\otimes\Delta_K)\Delta_K(k)
=
\sum k_{(1)}\otimes k_{(2)}\otimes k_{(3)}.
\]
Then both sides evaluated on $k\otimes l$ are
\[
\sum k_{(1)}\otimes k_{(2)}\otimes\theta'(k_{(3)}\otimes l),
\]
so the claim follows from coassociativity.

By Lemma~\ref{modlem}, the duals
\[
\big((K\otimes L)^\vee,\delta_\phi\big)
\quad\text{and}\quad
\big((K\otimes H_*(L))^\vee,\delta_\psi\big)
\]
are differential graded $(K^\vee,\delta_K)$--modules, with module structures induced by these $K$--coactions. Since $\theta$ is both a chain morphism and a $K$--comodule morphism, $\theta^\vee$ is a morphism of differential graded $(K^\vee,\delta_K)$--modules. It is a quasi--isomorphism because $\theta$ is a chain homotopy equivalence.
\end{proof}

	\subsection{\texorpdfstring{The Finite-Type Hypothesis on $H_*(L)$}{The Finite-Type Hypothesis on H(L)}}\label{ssecftpHL}
	No finite-type hypothesis is required for the differential graded $K^\vee$--module
	\[
	\big((K\otimes H_*(L))^\vee,\delta_\psi\big).
	\]
	The finite-type assumption below is used only to identify this full dual with the ordinary tensor product $K^\vee\otimes H^*(L)$. In this subsection, we therefore assume that $H_*(L)$ is of finite type and retain the fixed Universal Coefficient Theorem isomorphism $H^*(L)\cong H_*(L)^\vee$.
	\begin{lemma}\label{lemiden}
The canonical morphism
\[
\begin{aligned}
\Phi:K^\vee\otimes H^*(L)&\longrightarrow(K\otimes H_*(L))^\vee,\\
b\otimes v&\longmapsto\big(k\otimes x\mapsto(-1)^{|v||k|}b(k)v(x)\big)
\end{aligned}
\]
is an isomorphism of graded $\Bbbk$--vector spaces. Under the transported differential on the source, it is an isomorphism of cochain complexes.
\end{lemma}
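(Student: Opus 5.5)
The plan is to work degree by degree and reduce to a finite-dimensional statement. Fix $n$. By Definition~\ref{defdual}, the degree-$n$ part of the target is
\[
\big((K\otimes H_*(L))^\vee\big)^n=\mathrm{Hom}_\Bbbk\Big(\bigoplus_{q+s=n}K_q\otimes H_s(L),\Bbbk\Big)\cong\prod_{q+s=n}\mathrm{Hom}_\Bbbk\big(K_q\otimes H_s(L),\Bbbk\big).
\]
Since all complexes are nonnegatively graded, only $0\le q,s\le n$ occur. The product is therefore finite, so it coincides with the direct sum. The degree-$n$ part of the source is $\bigoplus_{q+s=n}K^\vee_q\otimes H_s(L)^\vee$, where I use the fixed identification $H^*(L)\cong H_*(L)^\vee$. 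So $\Phi$ splits as a finite direct sum of the maps
\[
\Phi_{q,s}:\mathrm{Hom}(K_q,\Bbbk)\otimes\mathrm{Hom}(H_s(L),\Bbbk)\to\mathrm{Hom}(K_q\otimes H_s(L),\Bbbk),
\]
each twisted by the sign $(-1)^{qs}$. That sign is a unit and does not affect bijectivity.

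The key step is the standard fact that for vector spaces $V,W$ with $W$ finite-dimensional, the map $V^\vee\otimes W^\vee\to(V\otimes W)^\vee$ is an isomorphism. I would prove it by choosing a basis $w_1,\dots,w_r$ of $W=H_s(L)$, which is finite-dimensional by the finite-type hypothesis, together with its dual basis $w_i^*$. A functional $g$ on $V\otimes W$ is then determined by the $r$ functionals $g_i=g(-\otimes w_i)\in V^\vee$, and $g=\sum_i\Phi(g_i\otimes w_i^*)$; this gives surjectivity. For injectivity, write an element of $V^\vee\otimes W^\vee$ uniquely as $\sum_i b_i\otimes w_i^*$; its image evaluated on $v\otimes w_j$ is $b_j(v)$, so the image vanishes only if every $b_j=0$. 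No finiteness is needed on $K_q$, which is important because $K=C_*(B|b_0)$ is typically infinite-dimensional in each degree. This step is the only place where the hypothesis enters, and it is the one point needing care: it must be applied to $H_s(L)$ and not to $K_q$.

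For the last assertion, define the differential on $K^\vee\otimes H^*(L)$ by $\Phi^{-1}\delta_\psi\Phi$. With this transported differential, $\Phi$ is by construction a bijective cochain map, hence an isomorphism of cochain complexes. I would also note, for use in the next lemma, that $\Phi$ is compatible with the left $K^\vee$-actions: on the source, $K^\vee$ acts by multiplication on the first factor; on the target, the action is the one from Lemma~\ref{modlem}. Compatibility follows from coassociativity and the Koszul sign convention $(b\otimes f)(k\otimes x)=(-1)^{|f||k|}b(k)f(x)$. The transported differential therefore makes the source a $(K^\vee,\delta_K)$-module isomorphic to $\big((K\otimes H_*(L))^\vee,\delta_\psi\big)$. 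Checking the signs in this compatibility is routine bookkeeping, and I expect no real obstacle there.
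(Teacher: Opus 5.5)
Your proof is correct and follows the paper's argument: split degreewise into the finitely many pairs $(q,s)$ with $q+s=n$, use finite-dimensionality of each $H_s(L)$ (and nothing about $K_q$) to get $K_q^\vee\otimes H_s(L)^\vee\cong(K_q\otimes H_s(L))^\vee$, and then transport the differential through $\Phi$. The only difference is that you give the dual-basis proof of this standard isomorphism, which the paper simply takes as known.
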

\begin{proof}
For every total degree $n$,
\[
\big(K^\vee\otimes H^*(L)\big)^n
=
\bigoplus_{p+q=n}K_p^\vee\otimes H_q(L)^\vee.
\]
Since $K$ and $H_*(L)$ are nonnegatively graded, only finitely many pairs $(p,q)$ occur. Moreover, each $H_q(L)$ is finite dimensional, so the canonical Koszul map gives
\[
K_p^\vee\otimes H_q(L)^\vee
\cong
(K_p\otimes H_q(L))^\vee
\]
without any finite-dimensionality assumption on $K_p$. Taking the finite direct sum over $p+q=n$ yields the stated isomorphism in every degree. Transporting $\delta_\psi=(\partial_\psi)^\vee$ on $(K\otimes H_*(L))^\vee$ through the isomorphism $\Phi$ gives the cochain-complex statement.
\end{proof}
	When $H_*(L)$ is of finite type, we transport the dualized twisted differential to $K^\vee\otimes H^*(L)$ by
\[
\delta_\psi:=\Phi^{-1}(\partial_\psi)^\vee\Phi.
\]
Thus
\[
\Phi:\big(K^\vee\otimes H^*(L),\delta_\psi\big)
\xrightarrow{\cong}
\big((K\otimes H_*(L))^\vee,\delta_\psi\big)
\]
is an isomorphism of cochain complexes. By Lemma~\ref{modlem}, the identification is compatible with the $(K^\vee,\delta_K)$--module structures; hence $\Phi$ is an isomorphism of differential graded $(K^\vee,\delta_K)$--modules. We identify these modules from now on.

	To obtain an explicit formula for $\delta_\psi$, first consider the transpose of the endomorphism action,
\[
H^*(L)\longrightarrow\big(\mathcal E(L)\otimes H_*(L)\big)^\vee,
\qquad v\longmapsto v\circ\lambda.
\]
For homogeneous $v\in H^n(L)$, the functional $v\circ\lambda$ vanishes outside
\[
\bigoplus_{q=0}^n\mathcal E(L)_q\otimes H_{n-q}(L).
\]
Since every $H_{n-q}(L)$ is finite dimensional, this transpose factors through the ordinary tensor product
\[
\lambda^\vee:H^*(L)\longrightarrow\mathcal E(L)^\vee\otimes H^*(L).
\]
We refer to this factorized map as the $\mathcal E(L)^\vee$--coaction induced by $\lambda$. We also define a degree $1$ morphism
\[
\psi^\vee:\mathcal E(L)^\vee\longrightarrow K^\vee,
\qquad
\psi^\vee(\alpha)=(-1)^{|\alpha|}\alpha\circ\psi
\]
for homogeneous $\alpha\in\mathcal E(L)^\vee$. We now have the following explicit formula for the dualized twisted differential.
\begin{lemma}\label{lemdelta}
Let $b\in K^\vee$ and $v\in H^*(L)$ be homogeneous. Write
\[
\lambda^\vee(v)=\sum_i\alpha_i\otimes v_i,
\]
where $\alpha_i\in\mathcal E(L)^\vee$ and $v_i\in H^*(L)$ are homogeneous. Then
\[
\delta_\psi(b\otimes v)
=
\delta_K(b)\otimes v
+
\sum_i(-1)^{|b|+|\alpha_i|+1}
\Delta_K^\vee\bigl(b\otimes\psi^\vee(\alpha_i)\bigr)\otimes v_i.
\]
Here $\Delta_K^\vee$ denotes the multiplication on $K^\vee$ induced by $\Delta_K$ through the canonical Koszul morphism
\[
K^\vee\otimes K^\vee\longrightarrow(K\otimes K)^\vee.
\]
\end{lemma}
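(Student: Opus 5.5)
The plan is to verify the formula by evaluation. Both sides are elements of $K^\vee\otimes H^*(L)$, and we compare their images under the isomorphism $\Phi$ of Lemma~\ref{lemiden}. Since $\delta_\psi=\Phi^{-1}(\partial_\psi)^\vee\Phi$, it suffices to evaluate both sides on an arbitrary homogeneous pair $k\otimes x$ with $k\in K$, $x\in H_*(L)$, and show that they agree. By Definition~\ref{defdual},
\[
\delta_\psi(\Phi(b\otimes v))(k\otimes x)=(-1)^{|b|+|v|}\,\Phi(b\otimes v)\big(\partial_\psi(k\otimes x)\big).
\]
Following Definition~\ref{deftdiff}, we split $\partial_\psi(k\otimes x)$ into the untwisted term $\partial_Kk\otimes x$ (here $H_*(L)$ has zero differential) and the twisted term $(-1)^{|k|}\sum k_{(1)}\otimes\psi(k_{(2)})x$. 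We treat the two pieces separately.

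For the untwisted piece, $\Phi(b\otimes v)(\partial_K k\otimes x)=(-1)^{|v|(|k|-1)}b(\partial_Kk)v(x)$. Rewriting $b\circ\partial_K=(-1)^{|b|}\delta_K(b)$, we compare with $\Phi(\delta_K b\otimes v)(k\otimes x)=(-1)^{|v||k|}(\delta_Kb)(k)v(x)$. The signs $(-1)^{|b|+|v|}$, $(-1)^{|v|(|k|-1)}$ and $(-1)^{|b|}$ should combine to $(-1)^{|v||k|}$, using that only $|k|=|b|+1$ contributes. This recovers the term $\delta_K(b)\otimes v$.

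For the twisted piece, we expand $v(\psi(k_{(2)})x)=(v\circ\lambda)(\psi(k_{(2)})\otimes x)$ and apply the factorization $\lambda^\vee(v)=\sum_i\alpha_i\otimes v_i$. Through the Koszul map, this gives a sign-weighted sum of $\alpha_i(\psi(k_{(2)}))\,v_i(x)$. By definition $\alpha_i\circ\psi=(-1)^{|\alpha_i|}\psi^\vee(\alpha_i)$, so we are left with
\[
\sum_i \pm\, b(k_{(1)})\,\psi^\vee(\alpha_i)(k_{(2)})\,v_i(x).
\]
This is $\pm\Delta_K^\vee\big(b\otimes\psi^\vee(\alpha_i)\big)(k)\,v_i(x)$, by the definition of the product on $K^\vee$ via the Koszul morphism. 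Applying $\Phi$ to $\Delta_K^\vee(b\otimes\psi^\vee(\alpha_i))\otimes v_i$ then yields the second term.

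All degree bookkeeping uses the constraints that make each evaluation nonzero: $|k_{(1)}|=|b|$, $|k_{(2)}|=|\alpha_i|+1$, $|x|=|v_i|$, and $|v|=|\alpha_i|+|v_i|$. The main obstacle is the sign reconciliation in the twisted term. Several signs must collapse to the single exponent $|b|+|\alpha_i|+1$:
\begin{itemize}
\item the dualization sign $(-1)^{|b|+|v|}$;
\item Brown's twisting sign $(-1)^{|k|}$;
\item the Koszul signs in $\Phi$, in $\lambda^\vee$ and in $\Delta_K^\vee$;
\item the sign in the definition of $\psi^\vee$.
\end{itemize}
I would first substitute every degree in terms of $|b|$, $|\alpha_i|$ and $|v_i|$, then reduce the total exponent mod $2$. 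A useful check is to confirm that the $|v_i|$-dependence cancels, as the target formula requires.
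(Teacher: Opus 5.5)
Your proposal is correct and follows essentially the same route as the paper: evaluate through $\Phi$ on $k\otimes x$, split off the untwisted term, which gives $\delta_K(b)\otimes v$, then expand the cap term using $\lambda^\vee$ and the definition of $\psi^\vee$, and match it with $\Delta_K^\vee(b\otimes\psi^\vee(\alpha_i))$ via the Koszul convention. The sign bookkeeping you deferred does close up: with $|k_{(1)}|=|b|$, $|k|=|b|+|\alpha_i|+1$ and $|v|=|\alpha_i|+|v_i|$, both exponents reduce modulo $2$ to $1+|v_i|+|b||\alpha_i|+|b||v_i|+|\alpha_i||v_i|$.
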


\begin{proof}
Let
\[
p=|b|,\qquad q=|v|,
\]
and set $r_i=|\alpha_i|$ and $s_i=|v_i|$, so that $r_i+s_i=q$. By the Koszul convention in Lemma~\ref{lemiden}, the identity
\[
\lambda^\vee(v)=\sum_i\alpha_i\otimes v_i
\]
means that for homogeneous $a\in\mathcal E(L)$ and $x\in H_*(L)$,
\[
v(ax)=\sum_i(-1)^{s_i|a|}\alpha_i(a)v_i(x).
\]

Let $k\in K$ and $x\in H_*(L)$ be homogeneous. Under the identification $\Phi$ of Lemma~\ref{lemiden}, the untwisted part of $\delta_\psi$ is $\delta_K\otimes\mathrm{id}_{H^*(L)}$. Hence
\[
\begin{aligned}
&\Phi\Big(\big(\delta_\psi-\delta_K\otimes\mathrm{id}_{H^*(L)}\big)(b\otimes v)\Big)(k\otimes x)\\
&\qquad=
(-1)^{p+q+|k|}\Phi(b\otimes v)\big((k\otimes x)\cap\psi\big).
\end{aligned}
\]
Writing $\Delta_K(k)=\sum_{(k)}k_{(1)}\otimes k_{(2)}$ and using
\[
(k\otimes x)\cap\psi
=
\sum_{(k)}k_{(1)}\otimes\psi(k_{(2)})x,
\]
the right-hand side becomes
\[
\sum_{i,(k)}
(-1)^{p+q+|k|+q|k_{(1)}|+s_ir_i}
 b(k_{(1)})\alpha_i\big(\psi(k_{(2)})\big)v_i(x),
\]
where a nonzero term has $|\psi(k_{(2)})|=r_i$, hence $|k_{(2)}|=r_i+1$.

On the other hand, the Koszul convention for the multiplication induced by $\Delta_K$ gives
\[
\Delta_K^\vee\bigl(b\otimes\psi^\vee(\alpha_i)\bigr)(k)
=
\sum_{(k)}(-1)^{(r_i+1)|k_{(1)}|+r_i}
 b(k_{(1)})\alpha_i\big(\psi(k_{(2)})\big).
\]
Therefore
\[
\begin{aligned}
&\Phi\Big(
(-1)^{p+r_i+1}\Delta_K^\vee\bigl(b\otimes\psi^\vee(\alpha_i)\bigr)\otimes v_i
\Big)(k\otimes x)\\
&\qquad=
\sum_{(k)}(-1)^{p+r_i+1+s_i|k|+(r_i+1)|k_{(1)}|+r_i}
 b(k_{(1)})\alpha_i\big(\psi(k_{(2)})\big)v_i(x).
\end{aligned}
\]
Using $q=r_i+s_i$ and $|k|=|k_{(1)}|+r_i+1$ on every nonzero summand, the two exponents agree modulo $2$. Summing over $i$ proves the formula.
\end{proof}

	\subsection{Semifree Dualized Twisted Tensor Products}\label{ssecsfd}
	In this subsection, we assume that $H_*(L)$ is of finite type and give a sufficient condition for $(K^{\vee}\otimes H^*(L), \delta_\psi)$ to be a semifree  $(K^{\vee},\delta_K)$--module. To this end, we need to examine more closely the difference between the twisted differential $\delta_\psi$ and the canonical tensor differential $\delta_\otimes =\delta_K\otimes \mathrm{id}_{H^*(L)}$ on $K^{\vee}\otimes H^*(L)$, which is the untwisted $(K^{\vee},\delta_K)$--module differential.
	
	\begin{lemma}\label{difflem}
For homogeneous $b\in K^\vee$ and $v\in H^*(L)$,
\[
\big(\delta_\psi-\delta_K\otimes\mathrm{id}_{H^*(L)}\big)(b\otimes v)
=
\sum_\alpha b_\alpha\otimes\big(v\circ\psi(k_\alpha)\big)
+
\sum_\beta b_\beta\otimes v_\beta,
\]
where $k_\alpha\in K_1$, $|b_\alpha|=|b|+1$, $|b_\beta|\geq|b|+2$, and $|v_\beta|\leq|v|-1$.
\end{lemma}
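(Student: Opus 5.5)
We start from the explicit formula of Lemma~\ref{lemdelta}. Writing $\lambda^\vee(v)=\sum_i\alpha_i\otimes v_i$, it gives
\[
\big(\delta_\psi-\delta_K\otimes\mathrm{id}_{H^*(L)}\big)(b\otimes v)
=\sum_i(-1)^{|b|+|\alpha_i|+1}\Delta_K^\vee\bigl(b\otimes\psi^\vee(\alpha_i)\bigr)\otimes v_i .
\]
So the task is to sort the summands by the degree $r_i=|\alpha_i|$. Each $\alpha_i$ lies in $\mathcal E(L)_{r_i}^\vee$, and $\mathcal E(L)$ has only nonnegative degrees, so $r_i\ge 0$. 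Moreover $\psi^\vee(\alpha_i)=\pm\alpha_i\circ\psi$ lives in $K^{r_i+1}$, and the first factor of each summand has degree $|b|+r_i+1$. We will split the sum into the summands with $r_i=0$ and those with $r_i\geq1$.

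\textbf{Summands with $r_i\geq 1$.} These give the $\beta$--terms. The first factor has degree $|b|+r_i+1\ge|b|+2$. The second factor has degree $|v_i|=|v|-r_i\le|v|-1$.

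\textbf{Summands with $r_i=0$.} These should give the $\alpha$--terms, and we rewrite them as follows.
\begin{itemize}
\item[(1)] Use finite type to choose a finite basis $\{x_j\}$ of each $H_q(L)$, with dual basis $\{x_j^\vee\}$.
\item[(2)] Note that $\mathcal E(L)_0=\prod_q\mathrm{End}(H_q(L))$. The degree-$0$ part of $\lambda^\vee(v)$, for $v\in H^q(L)$, is then $\sum_j \mathrm{ev}_{x_j,v}\otimes x_j^\vee$, where $\mathrm{ev}_{x_j,v}(a)=v(a x_j)$.
\item[(3)] Compute $\psi^\vee(\mathrm{ev}_{x_j,v})\in K^1$, which is the functional $k\mapsto v(\psi(k)x_j)$.
\item[(4)] Choose a basis $\{k_\alpha\}$ of $K_1$ with dual functionals $k_\alpha^\vee\in K^1$. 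Since $K^1=\mathrm{Hom}(K_1,\Bbbk)$ may be infinite-dimensional, we cannot simply expand an arbitrary functional in a basis. Instead, we keep $\Delta_K^\vee(b\otimes\psi^\vee(\mathrm{ev}_{x_j,v}))$ as an element $b_{j}$ of degree $|b|+1$.
\item[(5)] Group terms: $\sum_j b_j\otimes x_j^\vee$ is a finite sum. Its second factors $x_j^\vee$ are exactly combinations of the functionals $v\circ\psi(k)$ restricted to $H_q(L)$, since $v\circ\psi(k)=\sum_j v(\psi(k)x_j)\,x_j^\vee$.
\end{itemize}
Step (5) yields the $\alpha$--terms once the finite-dimensional subspace $\{v\circ\psi(k)\mid k\in K_1\}\subset H^q(L)$ is spanned by finitely many $v\circ\psi(k_\alpha)$. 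We will then re-express $\sum_j b_j\otimes x_j^\vee$ in a basis of that span, absorbing the coefficients into new $b_\alpha$ of degree $|b|+1$.

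The main obstacle is this last linear-algebra regrouping together with the signs. The image space is finite-dimensional because $H^q(L)$ is, so we pick $k_\alpha\in K_1$ with $v\circ\psi(k_\alpha)$ a basis of it. Then each $x_j^\vee$-component of the degree-$0$ part lies in that span after contracting with the $b_j$, and we rewrite the sum as $\sum_\alpha b_\alpha\otimes v\circ\psi(k_\alpha)$. Signs are harmless, since they only rescale the $b_\alpha$ and $b_\beta$.
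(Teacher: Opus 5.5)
Your overall plan matches the paper's. You feed Lemma~\ref{lemdelta} in, put the summands with $|\alpha_i|\geq1$ into the $\beta$--sum (your degree count there is correct), and handle the $|\alpha_i|=0$ summands by finite-dimensional linear algebra.

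The gap is in the regrouping step, which you yourself flag as the main obstacle. In step (5) you assert that the $x_j^\vee$ are combinations of the functionals $v\circ\psi(k)$. But the identity you cite, $v\circ\psi(k)=\sum_j v(\psi(k)x_j)\,x_j^\vee$, proves the reverse inclusion. The span $V=\{v\circ\psi(k)\mid k\in K_1\}$ is in general a proper subspace of $H^q(L)$; it is $0$ when $\psi(K_1)=0$. So the individual $x_j^\vee$ cannot be re-expressed in a basis of $V$. What must be shown is that the \emph{whole tensor} $\sum_j b_j\otimes x_j^\vee$ lies in $K^\vee\otimes V$. Your closing sentence about ``contracting with the $b_j$'' asserts this without proving it. This is the only nontrivial content of the lemma, and it is exactly what makes the filtration in Theorem~\ref{theoremsf} work, so it cannot be left as a gesture.

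The fix, as in the paper, is to regroup \emph{before} multiplying by $b$.
\begin{itemize}
\item[(a)] Set $\xi_v:=\sum_j\psi^\vee(\mathrm{ev}_{x_j,v})\otimes x_j^\vee\in K_1^\vee\otimes H^q(L)$.
\item[(b)] Because $H^q(L)$ is finite dimensional (no condition on $K_1$ is needed), $K_1^\vee\otimes H^q(L)\cong\mathrm{Hom}_\Bbbk(K_1,H^q(L))$. Under this isomorphism $\xi_v$ is the map $k\mapsto v\circ\psi(k)$, whose image is $V$.
\item[(c)] The same isomorphism restricts to $K_1^\vee\otimes V\cong\mathrm{Hom}_\Bbbk(K_1,V)$. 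Hence $\xi_v=\sum_\alpha\eta_\alpha\otimes\big(v\circ\psi(k_\alpha)\big)$, with $\eta_\alpha\in K_1^\vee$ and $\{v\circ\psi(k_\alpha)\}$ a basis of $V$. Concretely: extend this basis of $V$ to a basis of $H^q(L)$ and expand $\xi_v$. The coefficient functionals of the complementary basis vectors vanish on every $k\in K_1$, so they are $0$.
\item[(d)] Apply the linear map $c\otimes w\mapsto(-1)^{|b|+1}\Delta_K^\vee(b\otimes c)\otimes w$ to both expressions for $\xi_v$. The first gives the degree-zero contribution from Lemma~\ref{lemdelta}. The second gives the $\alpha$--terms, with $b_\alpha=(-1)^{|b|+1}\Delta_K^\vee(b\otimes\eta_\alpha)$ of degree $|b|+1$.
\end{itemize}
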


\begin{proof}
Set $n=|v|$. Since
\[
\lambda^\vee(v)
\in
\bigoplus_{q=0}^n\mathcal E(L)_q^\vee\otimes H^{n-q}(L),
\]
write
\[
\lambda^\vee(v)
=
\sum_{q=0}^n\sum_r\alpha_{q,r}\otimes v_{q,r},
\]
where $\alpha_{q,r}\in\mathcal E(L)_q^\vee$ and $v_{q,r}\in H^{n-q}(L)$. Lemma~\ref{lemdelta} gives
\[
\begin{aligned}
&\big(\delta_\psi-\delta_K\otimes\mathrm{id}_{H^*(L)}\big)(b\otimes v)\\
&\qquad=
\sum_{q=0}^n\sum_r(-1)^{|b|+q+1}
\Delta_K^\vee\bigl(b\otimes\psi^\vee(\alpha_{q,r})\bigr)\otimes v_{q,r}.
\end{aligned}
\]

If $q\geq1$, then $\psi^\vee(\alpha_{q,r})\in(K^\vee)^{q+1}$. Thus each corresponding term has the form $b_\beta\otimes v_\beta$ with
\[
|b_\beta|=|b|+q+1\geq|b|+2,
\qquad
|v_\beta|=n-q\leq n-1.
\]
These terms form the second sum.

For $q=0$, set
\[
\xi_v:=\sum_r\psi^\vee(\alpha_{0,r})\otimes v_{0,r}
\in K_1^\vee\otimes H^n(L).
\]
Since $H_n(L)$ is finite dimensional, so is $H^n(L)$, and the canonical map
\[
K_1^\vee\otimes H^n(L)
\xrightarrow{\cong}
\mathrm{Hom}_\Bbbk(K_1,H^n(L))
\]
is an isomorphism. Under this identification, $\xi_v$ corresponds to
\[
K_1\longrightarrow H^n(L),
\qquad
k\longmapsto v\circ\psi(k),
\]
because for $k\in K_1$ and $x\in H_n(L)$,
\[
\sum_r\psi^\vee(\alpha_{0,r})(k)v_{0,r}(x)
=
\sum_r\alpha_{0,r}(\psi(k))v_{0,r}(x)
=
v(\psi(k)x).
\]
Choose finitely many $k_\alpha\in K_1$ such that $\{v\circ\psi(k_\alpha)\}_\alpha$ is a basis of the image of this map. Then for suitable $\eta_\alpha\in K_1^\vee$,
\[
\xi_v
=
\sum_\alpha\eta_\alpha\otimes\big(v\circ\psi(k_\alpha)\big).
\]
Hence the $q=0$ contribution is
\[
\sum_\alpha
(-1)^{|b|+1}\Delta_K^\vee(b\otimes\eta_\alpha)
\otimes\big(v\circ\psi(k_\alpha)\big).
\]
Setting
\[
b_\alpha:=(-1)^{|b|+1}\Delta_K^\vee(b\otimes\eta_\alpha)
\]
gives $|b_\alpha|=|b|+1$ and yields the first sum. Combining the two parts completes the proof.
\end{proof}
	
	The preceding lemma allows us to construct a semifree filtration under the following hypothesis.
	
	\begin{definition}\label{defk1nil}
A twisting cochain
\[
\psi:K\longrightarrow\mathcal E(L)
\]
is said to satisfy {\it $K_1$--nilpotence} if for every integer $s\geq 0$ there exists a positive integer $N_s$ such that
\[
\psi(k_1)\psi(k_2)\cdots\psi(k_{N_s})x=0
\]
for all $k_1,\ldots,k_{N_s}\in K_1$ and all $x\in H_s(L)$.
\end{definition}
	
	\begin{theorem}\label{theoremsf}
Assume that $H_*(L)$ is of finite type and that the twisting cochain $\psi$ satisfies $K_1$--nilpotence. Then $(K^\vee\otimes H^*(L),\delta_\psi)$ is semifree as a $(K^\vee,\delta_K)$--module.
\end{theorem}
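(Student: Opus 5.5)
We construct an explicit increasing filtration of $K^\vee\otimes H^*(L)$ by $K^\vee$--submodules stable under $\delta_\psi$, whose subquotients are free on cycles. Then we invoke Lemma~\ref{lemsemifree}, or verify the definition directly. The ordering combines the cohomological degree of the fibre generator with a nilpotence level. Lemma~\ref{difflem} shows that $\delta_\psi-\delta_K\otimes\mathrm{id}$ sends $b\otimes v$ to two kinds of terms. The first kind, $b_\alpha\otimes (v\circ\psi(k_\alpha))$, keeps the fibre degree $|v|$ and involves $K_1$--twists. The second kind, $b_\beta\otimes v_\beta$, has strictly smaller fibre degree. So within a fixed fibre degree $s$ we need to control the transposed action $v\mapsto v\circ\psi(k)$ for $k\in K_1$, and this is where $K_1$--nilpotence enters.

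\emph{Step 1 (nilpotence filtration on each $H^s(L)$).} For each $s$, set
\[
V_s^{(j)}:=\{v\in H^s(L)\mid v\circ\psi(k_1)\cdots\psi(k_j)=0\ \text{for all }k_1,\dots,k_j\in K_1\},
\]
so that $0=V_s^{(0)}\subset V_s^{(1)}\subset\cdots\subset V_s^{(N_s)}=H^s(L)$. The equality at level $N_s$ is exactly $K_1$--nilpotence dualized, since $v\circ\psi(k_1)\cdots\psi(k_{N_s})$ vanishes on $H_s(L)$ for all $v$. By construction, $v\in V_s^{(j)}$ implies $v\circ\psi(k)\in V_s^{(j-1)}$ for $k\in K_1$. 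Here one must check that $\psi(K_1)\subset\mathcal E(L)_0$ preserves $H_s(L)$, which holds by degree. Choose a basis of $H^s(L)$ adapted to this flag; it is finite by finite type.

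\emph{Step 2 (the filtration).} Order the pairs $(s,j)$ lexicographically with $s$ first, and re-index them by $\mathbb N$. This is possible because each $N_s$ is finite, so each $s$ contributes finitely many stages; degreewise nilpotence suffices here and no uniform bound is needed. Define $M(s,j):=K^\vee\otimes\big(H^{<s}(L)\oplus V_s^{(j)}\big)$. By Lemma~\ref{difflem} together with Step 1, $\delta_\psi(b\otimes v)$ for $v\in V_s^{(j)}$ lies in $K^\vee\otimes H^{<s}(L)+K^\vee\otimes V_s^{(j-1)}$. Each $M(s,j)$ is visibly a $K^\vee$--submodule, and it is closed under $\delta_\psi$ by the Leibniz rule of Lemma~\ref{modlem}. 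The union of the $M(s,j)$ is everything.

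\emph{Step 3 (subquotients).} The quotient $M(s,j)/M(s,j-1)\cong K^\vee\otimes V_s^{(j)}/V_s^{(j-1)}$ is free on the images of $1\otimes v$ for adapted basis vectors $v$. We have $\delta_\psi(1\otimes v)\in M(s,j-1)$ because $\delta_K(1)=0$ and the twist terms lie in the lower stage. Thus the generators are cycles in the quotient, the filtration is semifree in the sense of the definition, and no appeal to Lemma~\ref{lemsemifree} is even needed.

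The main obstacle is Step 1's compatibility claim: the $q=0$ part of Lemma~\ref{difflem} really takes the form $v\circ\psi(k)$ with $k\in K_1$. Lemma~\ref{difflem} provides this, but one must make sure the decomposition chosen there, via the basis $\{v\circ\psi(k_\alpha)\}$ of the image, lands in $V_s^{(j-1)}$. It does, since the whole image $\{v\circ\psi(k)\}$ lies in that subspace. A secondary point is that $1\in K^\vee$ exists, which holds because $K^\vee$ is unital via $\varepsilon_K$.
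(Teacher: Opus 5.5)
Your proposal is correct and follows the paper's proof: your $V_s^{(j)}$ is exactly the annihilator $H^s(L)\cap\Gamma_{j,s}^\perp$ of the paper's iterated images $\Gamma_{j,s}$, and the stability and cycle arguments via Lemma~\ref{difflem} are the same. The only difference is organizational. The paper uses a two-level filtration, first by fibre degree and then a nilpotence filtration inside each quotient, glued together by Lemma~\ref{lemsemifree}. You flatten this into a single lexicographic filtration that satisfies the definition directly, which is precisely the filtration the paper itself uses later in the proof of Theorem~\ref{mthm3}.
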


\begin{proof}
Fix an arbitrary integer $s\geq 0$. Set $\Gamma_{0,s}=H_s(L)$ and define inductively
\[
\Gamma_{i,s}
:=
\mathrm{span}_\Bbbk\{\psi(k)x\mid k\in K_1,\ x\in\Gamma_{i-1,s}\},
\qquad 1\leq i\leq N_s.
\]
Then
\[
H_s(L)=\Gamma_{0,s}\supset\Gamma_{1,s}\supset\cdots\supset\Gamma_{N_s,s}=0,
\]
where the last equality follows from $K_1$--nilpotence.

Via the fixed isomorphism $H^s(L)\cong H_s(L)^\vee$, define
\[
\Gamma_{i,s}^\perp
:=
\{v\in H^*(L)\mid v(\Gamma_{i,s})=0\},
\qquad 0\leq i\leq N_s.
\]
This gives the increasing filtration
\begin{equation}\label{filt1}
0=\Gamma_{0,s}^\perp
\subset\Gamma_{1,s}^\perp
\subset\cdots
\subset\Gamma_{N_s,s}^\perp=H^s(L).
\end{equation}

By Lemma~\ref{difflem},
\[
\big(K^\vee\otimes H^{\leq s}(L),\delta_\psi\big)
\]
is a $(K^\vee,\delta_K)$--submodule. Set
\[
M(s):=K^\vee\otimes H^{\leq s}(L).
\]
We obtain a filtration by $(K^\vee,\delta_K)$--submodules
\[
0=M(-1)\subset M(0)\subset\cdots\subset M(s)\subset\cdots\subset M(\infty),
\]
where
\[
M(\infty)=K^\vee\otimes H^*(L)\cong(K\otimes H_*(L))^\vee.
\]

Set
\[
Q(s):=M(s)/M(s-1)
\]
and denote its induced differential by $D_\psi$. As a graded $K^\vee$--module,
\[
Q(s)\cong K^\vee\otimes H^s(L).
\]
By Lemma~\ref{lemsemifree}, it suffices to prove that $(Q(s),D_\psi)$ is semifree over $(K^\vee,\delta_K)$.

Using~\eqref{filt1}, define
\[
F(t):=K^\vee\otimes\big(H^s(L)\cap\Gamma_{t,s}^\perp\big),
\qquad 0\leq t\leq N_s.
\]
Then
\begin{equation}\label{filt2}
0=F(0)\subset F(1)\subset\cdots\subset F(N_s)=Q(s)
\end{equation}
is a filtration by graded $K^\vee$--submodules.

By Lemma~\ref{difflem}, the induced differential on $Q(s)$ has the form
\[
D_\psi(b\otimes v)
=
\delta_K(b)\otimes v
+
\sum_\alpha b_\alpha\otimes\big(v\circ\psi(k_\alpha)\big),
\]
for homogeneous $b\in K^\vee$ and $v\in H^s(L)$, with $k_\alpha\in K_1$. Suppose $v\in\Gamma_{t,s}^\perp$. For $x\in\Gamma_{t-1,s}$,
\[
\psi(k_\alpha)x\in\Gamma_{t,s},
\]
and therefore
\[
\big(v\circ\psi(k_\alpha)\big)(x)=0.
\]
Thus
\[
v\circ\psi(k_\alpha)\in\Gamma_{t-1,s}^\perp,
\]
so
\[
\big(D_\psi-\delta_K\otimes\mathrm{id}_{H^s(L)}\big)(F(t))
\subset F(t-1)
\qquad(1\leq t\leq N_s).
\]
In particular $D_\psi(F(t))\subset F(t)$, and~\eqref{filt2} is a filtration by $(K^\vee,\delta_K)$--submodules.

On every quotient $F(t)/F(t-1)$, the induced differential is the one coming from $\delta_K$ on the $K^\vee$--factor. Since
\[
F(t)/F(t-1)
\cong
K^\vee\otimes
\frac{H^s(L)\cap\Gamma_{t,s}^\perp}{H^s(L)\cap\Gamma_{t-1,s}^\perp}
\]
as graded $K^\vee$--modules, each quotient is free over $K^\vee$ on a graded $\Bbbk$--vector space generated by cycles. Hence~\eqref{filt2} is a semifree filtration of $(Q(s),D_\psi)$.

Therefore $(Q(s),D_\psi)$ is semifree for every $s\geq0$. Applying Lemma~\ref{lemsemifree} to the filtration
\[
\{M(s)\}_{s\geq-1}
\]
proves that
\[
(K^\vee\otimes H^*(L),\delta_\psi)
\]
is semifree as a $(K^\vee,\delta_K)$--module.
\end{proof}
	
	\section{Semifree Cochain Model for a Fibration}\label{secsfm}
Throughout this section, $\Bbbk$ denotes a field of arbitrary characteristic. Integral chains are used in the geometric constructions and in Brown's original model; after tensoring with $\Bbbk$ we use the degreewise dual convention of Definition~\ref{defdual}.
For a fibre $F$, we write
\[
\mathcal E(F):=\bigoplus_{q\geq0}\mathrm{Hom}^q_\Bbbk\big(H_*(F;\Bbbk),H_*(F;\Bbbk)\big).
\]

\subsection{Vertex--Relative Singular Chain Coalgebras}\label{ssecnqi}
For a topological space $Y$ and a subspace $Z\subset Y$, recall that the vertex--relative singular chain complex $C_*(Y|Z)$ is the subcomplex generated by simplices all of whose vertices lie in $Z$. The Alexander--Whitney coproduct restricts to $C_*(Y|Z)$, so it is a sub-dgc of $C_*(Y)$. As emphasized above, this is not the ordinary relative singular chain complex $C_*(Y,Z)$.

\begin{proposition}\label{propqiso}
Let $Y$ be a topological space and let $Z\subset Y$ meet every path component of $Y$. Then there exists a morphism of $\mathbb Z$--dgcs
\[
h_Z:C_*(Y)\longrightarrow C_*(Y|Z)
\]
such that the inclusion
\[
i_Z:C_*(Y|Z)\hookrightarrow C_*(Y)
\]
and $h_Z$ are mutually inverse up to ordinary chain homotopy.

More precisely, these ordinary chain homotopies may be induced by a face-compatible geometric homotopy whose time slices
\[
H^Z_s:C_*(Y)\longrightarrow C_*(Y),
\qquad 0\leq s\leq1,
\]
are morphisms of $\mathbb Z$--dgcs and satisfy
\[
H^Z_0=\mathrm{id}_{C_*(Y)},
\qquad
H^Z_1=i_Zh_Z.
\]
Moreover, every $H^Z_s$ restricts to $C_*(Y|Z)$, and the restricted endpoints are
\[
\mathrm{id}_{C_*(Y|Z)}
\qquad\text{and}\qquad
h_Zi_Z.
\]
In particular, $i_Z$ and $h_Z$ are quasi--isomorphisms.
\end{proposition}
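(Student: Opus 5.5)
We construct $h_Z$ geometrically, simplex by simplex, by induction on dimension, using a face-compatible family of homotopies that moves every vertex into $Z$. First, for each point $y\in Y$ choose a path $\gamma_y$ from $y$ into $Z$, taking the constant path when $y\in Z$. Such paths exist because $Z$ meets every path component of $Y$. Second, for each singular simplex $\sigma\colon\Delta^n\to Y$ construct a homotopy
\[
\Sigma_\sigma\colon\Delta^n\times[0,1]\to Y
\]
with the following properties: $\Sigma_\sigma(-,0)=\sigma$; the vertex $e_i$ is carried along $\gamma_{\sigma(e_i)}$; the homotopy is constant whenever $\sigma$ already has all vertices in $Z$; and $\Sigma_{\sigma\circ d^j}=\Sigma_\sigma\circ(d^j\times\mathrm{id})$ for every face inclusion $d^j$. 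Strictly, we need compatibility with all simplicial face operators, i.e. with restriction to every subsimplex, including the front and back faces used by Alexander--Whitney. This is built by induction on $n$: the homotopy is already prescribed on $\partial\Delta^n\times[0,1]\cup\Delta^n\times\{0\}$, and that subspace is a retract of $\Delta^n\times[0,1]$, so it extends. These are the maps $CL^n$ and $\mathcal K^n$ mentioned in the appendix.

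Next, define
\[
H^Z_s(\sigma):=\Sigma_\sigma(-,s),\qquad h_Z(\sigma):=\Sigma_\sigma(-,1).
\]
Every $\Sigma_\sigma(-,1)$ has all vertices in $Z$, so $h_Z$ lands in $C_*(Y|Z)$. Each $H^Z_s$ sends a simplex to a single simplex and commutes with all face operators, since restriction to any subsimplex is preserved. It therefore commutes with $\partial$. It also commutes with the Alexander--Whitney coproduct, because that coproduct is built from front and back faces of $\sigma$. It preserves the counit, since vertices go to vertices. Hence every $H^Z_s$, and in particular $h_Z$, is a morphism of $\mathbb Z$--dgcs. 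By construction $H^Z_0=\mathrm{id}$ and $H^Z_1=i_Zh_Z$. Since the homotopy is constant on simplices whose vertices lie in $Z$, $H^Z_s$ preserves $C_*(Y|Z)$, and the restricted endpoints are the identity and $h_Zi_Z$. In fact $h_Zi_Z=\mathrm{id}$ on the nose.

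For the ordinary chain homotopies, apply the standard prism operator to the face-compatible family $\Sigma_\sigma$. Triangulate $\Delta^n\times[0,1]$ in the standard way and set
\[
P(\sigma)=\sum_j(-1)^j\,\Sigma_\sigma\circ\pi_j,
\]
where $\pi_j$ runs over the prism simplices. Face compatibility gives the usual identity
\[
\partial P+P\partial=H^Z_1-H^Z_0=i_Zh_Z-\mathrm{id}.
\]
On $C_*(Y|Z)$ the prism $P$ is built from constant homotopies, and $h_Zi_Z=\mathrm{id}$ there, so the second homotopy can be taken to be zero. Quasi-isomorphism of $i_Z$ and $h_Z$ then follows.

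The main obstacle is making the inductive extension genuinely compatible with \emph{all} face maps at once, not just the codimension-one ones, so that the Alexander--Whitney compatibility of each $H^Z_s$ holds strictly. The plan is to handle this by building the homotopy on the simplicial set directly: define $\Sigma$ on nondegenerate simplices by induction on $n$ via the retraction $\Delta^n\times I\to\partial\Delta^n\times I\cup\Delta^n\times 0$. The boundary data is a consistent function on the boundary because lower-dimensional faces were already treated compatibly. One must also check that degenerate simplices are handled consistently, or else simply define $\Sigma$ on all simplices by induction without needing any degeneracy compatibility, since $C_*$ uses all simplices.
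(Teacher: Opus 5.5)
Your proof is correct. Its skeleton matches the paper's: a face-compatible geometric homotopy of each simplex moving the vertices along the chosen paths, time slices that are therefore dgc morphisms, and then the prism operator. What differs is how the homotopy is produced.

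The paper builds one universal, explicit model. It takes face-compatible maps $CL^n\colon\Delta^n\to J^n$ into the simplex with a whisker $[0,1]_k$ attached at each vertex, sending each $1/3$--corner onto its whisker and using the normalized weights $w_k$ on the central region. It adds a face-compatible homotopy $\mathcal K^n$ from the inclusion to $CL^n$: straight-line to $r^n\circ CL^n$, then sliding out along the whiskers. A simplex $\sigma$ and the chosen paths give $\Phi^Z_\sigma\colon J^n\to Y$, with $h_Z(\sigma)=\Phi^Z_\sigma\circ CL^n$ and $H^Z_s(\sigma)=\Phi^Z_\sigma\circ\mathcal K^n_s$. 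Face compatibility is checked once, on the models.

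You instead build each $\Sigma_\sigma$ separately by skeletal induction, extending along the retraction of $\Delta^n\times[0,1]$ onto $\partial\Delta^n\times[0,1]\cup\Delta^n\times\{0\}$. So your $\Sigma_\sigma$ are not the paper's $CL^n$ and $\mathcal K^n$, despite your remark.

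\begin{itemize}
\item Your route is shorter and formula-free. It also gives the slightly stronger $h_Zi_Z=\mathrm{id}$ on the nose. In the paper $h_Zi_Z(\sigma)=\sigma\circ r^n\circ CL^n\neq\sigma$, which is why the restricted homotopy is needed there.
\item The paper's universal construction makes naturality transparent: Lemma~\ref{lemfibcompatible} is immediate from $p\circ\Phi^E_\sigma=\Phi^B_{p\sigma}$.
\item To get that from your construction, fix one retraction $R_n$ for each $n$ and use it for all simplices of all spaces, with lifted paths in $E$. Then $p\circ\Sigma^E_\sigma=\Sigma^B_{p\sigma}$ follows by the same induction. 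The constant/non-constant case split agrees, because $\sigma$ has all vertices in $F$ exactly when $p\sigma$ has all vertices at $b_0$.
\end{itemize}

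The ``main obstacle'' you flag is not one. Every face inclusion is a composite of codimension-one inclusions. The boundary data on $\partial\Delta^n\times[0,1]$ is well defined by the cosimplicial identities together with inductive codimension-one compatibility. Hence full face compatibility, and with it Alexander--Whitney compatibility, is automatic.
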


The construction is choice-dependent and is given explicitly in Section ~\ref{appendix} (the Appendix). We shall need choices compatible with the projection of a Serre fibration.

\begin{lemma}\label{lemfibcompatible}
Let
\[
F\xrightarrow{\hphantom{p}}E\xrightarrow{p}B
\]
be a Serre fibration, where $(B,b_0)$ is path--connected and $F=p^{-1}(b_0)$. Then the morphisms in Proposition~\ref{propqiso} may be chosen simultaneously as
\[
h_B:C_*(B)\longrightarrow C_*(B|b_0)
\]
and
\[
h_E:C_*(E)\longrightarrow C_*(E|F)
\]
so that
\begin{equation}\label{eqfibcompatible}
p_\sharp h_E=h_Bp_\sharp.
\end{equation}
Moreover, the geometric homotopies may be chosen so that every time slice $H^B_s$ and $H^E_s$ is a morphism of $\mathbb Z$--dgcs and
\begin{equation}\label{eqfibhomotopycompatible}
p_\sharp H^E_s=H^B_sp_\sharp
\qquad(0\leq s\leq1).
\end{equation}
\end{lemma}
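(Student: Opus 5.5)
The plan is to construct $h_B$ and $h_E$ simultaneously by the explicit geometric procedure of the Appendix. For each singular simplex $\sigma\colon\Delta^n\to Y$ we choose a deformation $CL^n(\sigma)\colon\Delta^n\times I\to Y$ that moves every vertex along a chosen path into $Z$. The choice is made inductively over skeleta so that it is compatible with faces, and $h_Z(\sigma)$ is defined as the end simplex $CL^n(\sigma)|_{\Delta^n\times\{1\}}$. Face compatibility makes $h_Z$ a chain map. Because the deformation of $\sigma$ restricts on every front and back face to the deformation of that face, $h_Z$ commutes with the Alexander--Whitney coproduct, and the same holds for each time slice $H^Z_s(\sigma)=CL^n(\sigma)|_{\Delta^n\times\{s\}}$. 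I would therefore carry out the construction first on $B$ with $Z=\{b_0\}$, and then on $E$ with $Z=F$, forcing the choices on $E$ to lie over the choices on $B$.

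\textbf{Base.} For each point $b\in B$ choose a path $\gamma_b$ from $b$ to $b_0$, taking $\gamma_{b_0}$ constant. This is possible because $B$ is path--connected. Define $CL^0$ on a $0$--simplex by $\gamma_b$. Inductively on $n$, the prescribed homotopy on $\partial\Delta^n\times I\cup\Delta^n\times\{0\}$ is determined by the faces together with $\sigma$ itself. We extend it to $\Delta^n\times I$ using a retraction $r_n\colon\Delta^n\times I\to\partial\Delta^n\times I\cup\Delta^n\times\{0\}$, fixed once for all $n$. If $\sigma$ already lies in $C_*(B|b_0)$, we keep its vertex paths constant and its deformation stationary, so that $H^B_s$ restricts to the vertex--relative complex with the stated endpoints.

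\textbf{Total space.} For $e\in E$, choose a lift $\tilde\gamma_e$ of $\gamma_{p(e)}$ starting at $e$. This lift exists by path lifting for the Serre fibration, and it ends in $F$. For $e\in F$ take the constant lift. Inductively, for an $n$--simplex $\tau$ of $E$ we have the base deformation $CL^n(p\tau)\colon\Delta^n\times I\to B$. Over the subspace $\partial\Delta^n\times I\cup\Delta^n\times\{0\}$ we already have a lift, given by the deformations of the faces of $\tau$ together with $\tau$ itself. Since $(\Delta^n\times I,\partial\Delta^n\times I\cup\Delta^n\times\{0\})$ is homeomorphic to $(\Delta^n\times I,\Delta^n\times\{0\})$ and $p$ has the homotopy lifting property for cubes, there is a lift $CL^n(\tau)$ of $CL^n(p\tau)$ extending the given data. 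Consequently $p\circ CL^n(\tau)=CL^n(p\tau)$ for every $\tau$. Taking time slices gives \eqref{eqfibhomotopycompatible}, and taking $s=1$ gives \eqref{eqfibcompatible}. The vertices of $h_E(\tau)$ are endpoints of the lifts $\tilde\gamma_e$, so they lie in $F$. Each slice $H^E_s$ is a dgc morphism for the same face-restriction reason as on the base. The chain homotopies themselves are obtained by triangulating $\Delta^n\times I$ in the standard prism way, applied to $CL^n$. This prism construction is natural in the deformation, so the homotopies are also compatible with $p_\sharp$.

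\textbf{Main obstacle.} The delicate point is making the lifting step on $E$ compatible simultaneously with face restrictions and with the Alexander--Whitney splitting. Every front face $\tau|_{[0..i]}$ and back face $\tau|_{[i..n]}$ is a face of $\tau$, so its deformation is by induction a restriction of $CL^n(\tau)$. This follows from the face-compatible extension, and no extra choice is needed. The genuine work is therefore the relative lifting. I would handle it through the homeomorphism of pairs above, which reduces the relative problem to the ordinary homotopy lifting property for $\Delta^n$. That ordinary property is exactly what a Serre fibration provides. The remaining checks are the consistency conditions: the deformation of a simplex already in $C_*(E|F)$ must be stationary, and this data must lie over the stationary base data whenever $p\tau\in C_*(B|b_0)$. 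Both conditions hold because the constant choices in the two cases are compatible: $\tilde\gamma_e$ is constant exactly when $e\in F$, which is exactly when $\gamma_{p(e)}$ is constant.
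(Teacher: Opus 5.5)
Your proposal is correct, but it takes a different route from the paper.

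\textbf{The paper's route.} The paper never builds deformations simplex by simplex. It fixes once and for all a face-compatible map $CL^n\colon\Delta^n\to J^n$ into the simplex with a whisker $[0,1]$ attached at each vertex. It also fixes a face-compatible homotopy $\mathcal K^n$ in $J^n$ from the inclusion of $\Delta^n$ to $CL^n$. It then sets $h_Z(\sigma)=\Phi^Z_\sigma\circ CL^n$ and $H^Z_s(\sigma)=\Phi^Z_\sigma\circ\mathcal K^n_s$. Here $\Phi^Z_\sigma\colon J^n\to Y$ is $\sigma$ on the simplex part and the chosen vertex paths on the whiskers. Since $\widetilde\gamma_e$ lifts $\gamma_{p(e)}$, one has $p\circ\Phi^E_\sigma=\Phi^B_{p\sigma}$ on $J^n$, and both compatibility identities follow immediately. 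The fibration property is used only to lift the vertex paths.

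\textbf{Your route.} Your skeletal induction is valid. The relative lifting step for $(\Delta^n\times I,\partial\Delta^n\times I\cup\Delta^n\times\{0\})$ does follow from the Serre property via the homeomorphism of pairs. Your stationary convention on vertex--relative simplices is consistent, because $\tau\in C_*(E|F)$ if and only if $p\tau\in C_*(B|b_0)$. It even gives $h_Zi_Z=\mathrm{id}$ on the nose, which the paper's construction does not.

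\textbf{The lifting in dimensions $n\geq1$ is unnecessary.} Write $G_\tau$ for the prescribed map on $\partial\Delta^n\times I\cup\Delta^n\times\{0\}$, built from the face deformations and $\tau$. Suppose that on $E$ you extend by the same fixed retraction $r_n$ you used on $B$, i.e.\ you set your deformation of $\tau$ equal to $G_\tau\circ r_n$ for $\tau\notin C_*(E|F)$. By induction $p\circ G_\tau=G_{p\tau}$, so the deformation of $\tau$ lies over that of $p\tau$ automatically. This is exactly the naturality in the space that the paper's universal $CL^n$ and the maps $\Phi_\sigma$ make visible in one step.

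\textbf{Trade-off.} Your approach needs only a choice of retraction rather than explicit formulas for $CL^n$. The paper's approach avoids the induction and the case distinction altogether.
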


The proof is given in the Appendix. After extension of coefficients to $\Bbbk$, set $K=C_*(B|b_0;\Bbbk)$. Equip $C_*(E;\Bbbk)$ with the left dg $K$--coaction
\[
\rho_E:=(h_Bp_\sharp\otimes\mathrm{id})AW_E,
\]
and equip $C_*(E|F;\Bbbk)$ with
\[
\rho_{E|F}:=(p_\sharp\otimes\mathrm{id})AW_E.
\]

\begin{corollary}\label{corhecomodule}
With the preceding notation,
\[
h_E:C_*(E;\Bbbk)\longrightarrow C_*(E|F;\Bbbk)
\]
is a strict morphism of left dg $K$--comodules and a chain homotopy equivalence of the underlying complexes.
\end{corollary}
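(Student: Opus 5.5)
Two things must be shown: that $h_E$ intertwines the coactions $\rho_E$ and $\rho_{E|F}$, and that it is a chain homotopy equivalence. The second is already in hand: Proposition~\ref{propqiso}, with the choices of Lemma~\ref{lemfibcompatible}, says $h_E$ and the inclusion $i_F\colon C_*(E|F)\hookrightarrow C_*(E)$ are mutually inverse up to chain homotopy over $\mathbb Z$. Tensoring with $\Bbbk$ preserves chain homotopies, so this persists after extension of coefficients. All the work is therefore in the strict comodule statement.

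Before that, I check that both coactions make sense. The map $\rho_{E|F}=(p_\sharp\otimes\mathrm{id})AW_E$ must actually land in $K\otimes C_*(E|F;\Bbbk)$. If every vertex of a simplex $\sigma$ lies in $F$, then the front face $\sigma|_{[0,\dots,i]}$ has all its vertices in $F$. Its image under $p$ therefore has all its vertices at $b_0$, so $p_\sharp$ of the front face lies in $C_*(B|b_0)$. The back face also has its vertices in $F$, so it lies in $C_*(E|F)$. That $\rho_E$ and $\rho_{E|F}$ are coassociative, counital chain maps follows from three facts: $AW_E$ is a coassociative dgc structure, $p_\sharp$ is a dgc morphism, and $h_B$ is a dgc morphism by Proposition~\ref{propqiso}.

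For the comodule identity, I compute, using first that $h_E$ is a dgc morphism (so $AW_E h_E=(h_E\otimes h_E)AW_E$) and then \eqref{eqfibcompatible}:
\[
\rho_{E|F}h_E=(p_\sharp\otimes\mathrm{id})AW_Eh_E
=(p_\sharp h_E\otimes h_E)AW_E
=(h_Bp_\sharp\otimes h_E)AW_E .
\]
The last expression equals $(\mathrm{id}_K\otimes h_E)\rho_E$ by the definition of $\rho_E$. Here $AW_E$ on $C_*(E|F)$ is the restriction of the Alexander--Whitney map, since $C_*(E|F)$ is a sub-dgc. No Koszul signs appear, because $h_E$ has degree $0$.

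The main point to verify is that $h_E$ is a genuine dgc morphism into the sub-dgc $C_*(E|F)$, with the coproduct on the target the restricted $AW_E$, and not merely a dgc morphism up to homotopy. I would confirm this directly from the statement of Proposition~\ref{propqiso}, which asserts that $h_Z$ is a morphism of $\mathbb Z$--dgcs, together with the compatibility \eqref{eqfibcompatible}. Both are supplied by the Appendix construction, so the argument reduces to the displayed computation.
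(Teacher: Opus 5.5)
Your proposal is correct and follows the paper's proof: the same one-line computation $\rho_{E|F}h_E=(p_\sharp h_E\otimes h_E)AW_E=(h_Bp_\sharp\otimes h_E)AW_E=(\mathrm{id}_K\otimes h_E)\rho_E$ using that $h_E$ is a dgc morphism and \eqref{eqfibcompatible}, with the chain homotopy equivalence quoted from Proposition~\ref{propqiso}. Your extra check that $\rho_{E|F}$ lands in $K\otimes C_*(E|F;\Bbbk)$ and that both coactions are counital is a harmless addition the paper leaves implicit.
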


\begin{proof}
Since $h_E$ is a dgc morphism and $p_\sharp h_E=h_Bp_\sharp$,
\[
\begin{aligned}
\rho_{E|F}h_E
&=(p_\sharp\otimes\mathrm{id})AW_Eh_E\\
&=(p_\sharp h_E\otimes h_E)AW_E\\
&=(h_Bp_\sharp\otimes h_E)AW_E\\
&=(\mathrm{id}_K\otimes h_E)\rho_E.
\end{aligned}
\]
The chain-homotopy-equivalence statement follows from Proposition~\ref{propqiso}.
\end{proof}

\begin{corollary}\label{corhemodule}
Set
\[
K^\vee=C^*(B|b_0;\Bbbk).
\]
Then the degreewise dual
\[
h_E^\vee:C^*(E|F;\Bbbk)\longrightarrow C^*(E;\Bbbk)
\]
is a quasi--isomorphism of differential graded $K^\vee$--modules. Here the $K^\vee$--module structure on $C^*(E;\Bbbk)$ is induced by
\[
K^\vee\xrightarrow{\;h_B^\vee\;}C^*(B;\Bbbk)\xrightarrow{\;p^\sharp\;}C^*(E;\Bbbk),
\]
whereas the $K^\vee$--module structure on $C^*(E|F;\Bbbk)$ is induced by the dual of
\[
p_\sharp:C_*(E|F;\Bbbk)\longrightarrow K.
\]
In fact, $h_E^\vee$ is a cochain homotopy equivalence of the underlying cochain complexes. $\qqed$
\end{corollary}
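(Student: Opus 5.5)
The plan is to dualize Corollary~\ref{corhecomodule}, with Lemma~\ref{modlem} as the model for how a comodule morphism becomes a module morphism.

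First, recall how the $K^\vee$--module structures arise. For a left dg $K$--comodule $(M,\rho)$, the degreewise dual $M^\vee$ becomes a left dg $K^\vee$--module through the composite
\[
K^\vee\otimes M^\vee\longrightarrow(K\otimes M)^\vee\xrightarrow{\rho^\vee}M^\vee,
\]
with the Koszul sign convention of Lemma~\ref{modlem}. Since $\rho$ is a chain map and the Koszul map is a cochain map, the Leibniz rule follows exactly as in that lemma. Coassociativity and counitality of $\rho$ give associativity and unitality of the action.

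Second, identify the two module structures in the statement.
\begin{itemize}
\item For $C_*(E|F;\Bbbk)$ with $\rho_{E|F}=(p_\sharp\otimes\mathrm{id})AW_E$, the dual action is $b\cdot f=(p_\sharp)^\vee(b)\smile f$, where $\smile$ is the product on $C^*(E|F;\Bbbk)$ dual to $AW_E$. This is the structure induced by the dual of $p_\sharp\colon C_*(E|F;\Bbbk)\to K$.
\item For $C_*(E;\Bbbk)$ with $\rho_E=(h_Bp_\sharp\otimes\mathrm{id})AW_E$, the dual action is $b\cdot f=p^\sharp(h_B^\vee(b))\smile f$. Here one uses $(h_Bp_\sharp)^\vee=p^\sharp h_B^\vee$, and the product dual to $AW_E$ is the cup product of Remark~\ref{rmksignedcochains}. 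This is precisely restriction of scalars along $p^\sharp h_B^\vee$.
\end{itemize}

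Third, show that $h_E^\vee$ is $K^\vee$--linear. Dualizing the identity
\[
\rho_{E|F}h_E=(\mathrm{id}_K\otimes h_E)\rho_E
\]
from Corollary~\ref{corhecomodule} gives $h_E^\vee\rho_{E|F}^\vee=\rho_E^\vee(\mathrm{id}_K\otimes h_E)^\vee$. Precomposing with the Koszul map, and using its naturality,
\[
(b\otimes h_E^\vee f)\mapsto (\mathrm{id}\otimes h_E)^\vee(b\otimes f),
\]
yields $h_E^\vee(b\cdot f)=b\cdot h_E^\vee(f)$. The naturality involves no extra sign because $h_E$ has degree $0$. Also $h_E^\vee$ is a cochain map, since $h_E$ is a chain map and the dual differential $\delta(f)=(-1)^{|f|}f\circ\partial$ commutes with precomposition by degree-$0$ chain maps.

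Finally, prove the homotopy statement. By Proposition~\ref{propqiso} and Lemma~\ref{lemfibcompatible}, $h_E$ and $i_F$ are mutually inverse up to chain homotopies $s$ and $s'$ over $\mathbb Z$; tensoring with $\Bbbk$ preserves these homotopies. Degreewise dualization is an additive functor that sends a chain homotopy $s$ to $\pm s^\vee$, with the sign adjusted to match the convention $\delta=(-1)^{|f|}f\circ\partial$. Hence $h_E^\vee$ and $i_F^\vee$ are mutually inverse cochain homotopy equivalences, and in particular $h_E^\vee$ is a quasi--isomorphism.

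The only real care needed is bookkeeping signs in the Koszul map and in the dual homotopy. Note that the homotopies need not be $K^\vee$--linear, which is why the statement claims only a homotopy equivalence of the underlying complexes.
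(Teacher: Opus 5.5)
Your proposal is correct and matches the paper's (implicit) argument: the paper states this corollary without separate proof, as the degreewise dual of Corollary~\ref{corhecomodule}. As you do, the strict comodule identity dualizes to $K^\vee$--linearity with respect to the two stated module structures, and the chain homotopies of Proposition~\ref{propqiso} dualize, with the sign adjustment you describe, to cochain homotopies.
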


\subsection{A Strict Comodule Refinement of Brown's Inverse}\label{ssecbrowncomodule}
Let $F\xrightarrow{\hphantom{p}}E\xrightarrow{p}B$ be a weakly transitive fibration in the sense of Brown, and set
\[
K=C_*(B|b_0),\qquad L=C_*(F),\qquad C=C_*(E|F),\qquad
T_\varphi=(K\otimes L,\partial_\varphi).
\]
Equip $C$ with $\rho_E:=(p_\sharp\otimes\mathrm{id}_C)AW_E$ and $T_\varphi$ with $\rho_\varphi:=\Delta_K\otimes\mathrm{id}_L$. The same coassociativity calculation as in Lemma~\ref{modlem}, valid over $\mathbb Z$, shows that $\rho_\varphi$ is a chain map.

\medskip
\noindent\textit{A cofree-comodule observation.}\ 
Let $D$ be a counital left $K$--comodule with coaction $\rho_D$. A degree-zero map $f:D\to K\otimes L$ is $K$--colinear if and only if
\begin{equation}\label{eqcofreeformula}
f=(\mathrm{id}_K\otimes\bar f)\rho_D,
\qquad
\bar f=(\varepsilon_K\otimes\mathrm{id}_L)f.
\end{equation}
Indeed, the forward implication follows by applying $\mathrm{id}_K\otimes\varepsilon_K\otimes\mathrm{id}_L$ to the colinearity identity, and the converse follows from coassociativity. In particular, for a singular $n$--simplex $\sigma\in C_n(E|F)$,
\begin{equation}\label{eqcofreeomega}
f(\sigma)=\sum_{i=0}^n p_\sharp\sigma[0,\ldots,i]\otimes\bar f\bigl(\sigma[i,\ldots,n]\bigr).
\end{equation}
Since all vertices of $\sigma$ lie in $F$, the $i=0$ term is $b_0\otimes\bar f(\sigma)$; hence, once $f$ is fixed below dimension $n$, the only undetermined part of a colinear value on $\sigma$ is its fibre component $\bar f(\sigma)$.

Suppose now that $D$ and $K\otimes L$ are dg $K$--comodules, with differentials $\partial_D$ and $\partial_T$, and that $f$ is colinear. The degree $-1$ map
\[
\mathcal D_f:=\partial_Tf-f\partial_D
\]
is again colinear in the graded sense:
\begin{equation}\label{eqdefectcolinear}
\rho_T\mathcal D_f=(\mathrm{id}_K\otimes\mathcal D_f)\rho_D.
\end{equation}
To see the sign explicitly, for homogeneous $k\in K$ and $x\in D$ the difference
\[
\partial_{K\otimes T}(\mathrm{id}_K\otimes f)(k\otimes x)
-(\mathrm{id}_K\otimes f)\partial_{K\otimes D}(k\otimes x)
\]
is $(-1)^{|k|}k\otimes(\partial_Tf-f\partial_D)(x)$, which is exactly $(\mathrm{id}_K\otimes\mathcal D_f)(k\otimes x)$ by the Koszul convention.

\medskip
\noindent\textit{Brown's universal model.}\ 
We also recall the part of Brown's acyclic-model construction used below. Let $q_n:\Delta^n\to B_n$ identify all vertices of $\Delta^n$ to a single point $b_n$, and let $E(B_n)$ be the Moore path space. If $e(a)$ denotes the terminal point of a Moore path $a$, Brown defines
\[
X_n=\big\{(a,t)\in E(B_n)\times\Delta^n\mid e(a)=q_n(t)\big\},
\qquad p_n(a,t)=a(0),
\]
and equips $p_n:X_n\to B_n$ with the transitive lifting function induced by path concatenation. Write $\mathcal F_n=(X_n,B_n,p_n,\lambda_n)$ and $F_n=p_n^{-1}(b_n)$. The distinguished singular $n$--simplex is
\[
\zeta_n(t)=\big(c_{q_n(t)},t\big),
\]
where $c_{q_n(t)}$ is the constant Moore path; hence $\zeta_n\in C_n(X_n|F_n)$. For every $\sigma\in C_n(E|F)$, Brown constructs from $\sigma$ and the chosen lifting function a morphism of weakly transitive fibrations $(u_\sigma,v_\sigma):\mathcal F_n\to(E,B,p)$ satisfying $(u_\sigma)_\sharp(\zeta_n)=\sigma$; see~\cite[(6.1)(i)]{B}.

Set $K_{(n)}=C_*(B_n|b_n)$ and $L_{(n)}=C_*(F_n)$, let $\varphi_n:K_{(n)}\to C_*(\Omega B_n)$ be Brown's twisting cochain for $\mathcal F_n$, and write $T_{\varphi_n}=(K_{(n)}\otimes L_{(n)},\partial_{\varphi_n})$. Brown's construction also gives
\begin{equation}\label{equniversalacyclic}
H_q(F_n;\mathbb Z)=0\quad(q>0),
\qquad
H_q(T_{\varphi_n})=0\quad(q>0);
\end{equation}
see~\cite[(6.1)(ii)--(6.2)]{B}. The first vanishing follows from the comparison of $F_n$ with the loop space of $B_n$, while $B_n$ has the homotopy type of a wedge of circles.

\begin{lemma}\label{lembrowncomodule}
There exists a chain homotopy equivalence of the underlying chain complexes
\[
\omega_p:C\longrightarrow T_\varphi
\]
which is a strict morphism of left dg $K$--comodules; equivalently,
\[
\rho_\varphi\omega_p=(\mathrm{id}_K\otimes\omega_p)\rho_E.
\]
\end{lemma}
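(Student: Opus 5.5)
The plan is to build $\omega_p$ by Brown's method of acyclic models, but inside the class of $K$--colinear maps. By the cofree-comodule observation~\eqref{eqcofreeformula}, a colinear map is determined by its fibre component $\bar\omega=(\varepsilon_K\otimes\mathrm{id}_L)\omega$. So I will construct $\bar\omega$ on $C_n$ by induction on $n$, and always set
\[
\omega_p(\sigma)=\sum_{i=0}^n p_\sharp\sigma[0,\ldots,i]\otimes\bar\omega\bigl(\sigma[i,\ldots,n]\bigr)
\]
as in~\eqref{eqcofreeomega}. Colinearity then holds by construction, and the only thing to arrange is the chain-map identity $\partial_\varphi\omega_p=\omega_p\partial$. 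In degree $0$, put $\bar\omega(\sigma)=\sigma\in C_0(F)$, so that $\omega_p(\sigma)=b_0\otimes\sigma$. Naturality is built in: for a morphism of weakly transitive fibrations $(u,v)$ I will have $(v_\sharp\otimes u_\sharp)\omega=\omega u_\sharp$, where $u_\sharp$ also denotes its restriction to fibres.

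For the inductive step, suppose $\bar\omega$ has been defined naturally on $C_{<n}$ and that $\omega_p$ is a chain map there. On the universal simplex $\zeta_n\in C_n(X_n|F_n)$, let $\omega^0(\zeta_n)$ be the colinear expression above with the unknown term $\bar\omega(\zeta_n)$ set to $0$; only lower-dimensional back faces enter. Consider the defect
\[
c:=\omega_{p_n}(\partial\zeta_n)-\partial_{\varphi_n}\omega^0(\zeta_n).
\]
By~\eqref{eqdefectcolinear}, $\mathcal D=\partial_T\omega^0-\omega^0\partial$ is colinear, so $\rho(c)=\sum_i p_\sharp\zeta_n[0..i]\otimes\mathcal D(\zeta_n[i..n])$. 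For $i\geq1$ these terms vanish, since $\mathcal D=0$ on $C_{<n}$ by the inductive hypothesis. Applying $\varepsilon\otimes\mathrm{id}$ then shows that $c=b_0\otimes\bar c$ with $\bar c\in L_{(n),n-1}$. Moreover $\partial_{\varphi_n}c=\omega(\partial\partial\zeta_n)=0$, and since $\varphi_n(b_0)=0$ the cap term on $b_0\otimes-$ vanishes, so $\bar c$ is a cycle of $C_*(F_n)$. If $n\geq2$, the vanishing $H_{n-1}(F_n)=0$ in~\eqref{equniversalacyclic} gives $\bar c=\partial x$. If $n=1$, the augmentation compatibility forces $\varepsilon(\bar c)=0$; since $F_n$ is path connected, the vanishing in~\eqref{equniversalacyclic} implies $\tilde H_0(F_n)=0$, so again $\bar c=\partial x$. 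Set $\bar\omega(\zeta_n)=x$, and for general $\sigma$ set $\bar\omega(\sigma)=(u_\sigma)_\sharp(x)$.

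The step I expect to be the main obstacle is verifying that this transported definition agrees with the colinear formula for $\sigma$ and makes $\omega_p$ a chain map on $\sigma$. The issue is that the back faces $\sigma[i..n]$ are not themselves transported universal simplices of the correct dimension; they are images $(u_\sigma)_\sharp(\zeta_n[i..n])$. The argument must use that $\bar\omega$ on $C_{<n}$ is already natural, together with $(v_\sigma)_\sharp p_{n\sharp}=p_\sharp(u_\sigma)_\sharp$ (Brown~\cite[(6.1)]{B}), to get $\omega_p(\sigma)=(v_\sigma\otimes u_\sigma)_\sharp\omega_{p_n}(\zeta_n)$. The chain identity is then pushed forward from the model. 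Naturality in degree $n$ requires the standard acyclic-models check that $u\circ u_\sigma$ and $u_{u_\sharp\sigma}$ agree, which I will quote from Brown's construction.

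Finally, for the homotopy equivalence, I compare $\omega_p$ with Brown's $\omega_p^{\mathrm{Br}}$ from Theorem~\ref{theorembr}. Both are natural chain maps $C_*(E|F)\to T_\varphi$, and they agree in degree $0$, where each sends $\sigma\mapsto b_0\otimes\sigma$. The target is acyclic in positive degrees on the models by~\eqref{equniversalacyclic}, and $C_*(E|F)$ is free on the models $\zeta_n$. The acyclic-model theorem therefore gives a natural chain homotopy $\omega_p\simeq\omega_p^{\mathrm{Br}}$. Since $\omega_p^{\mathrm{Br}}$ is a chain homotopy equivalence with inverse $\chi_p$, so is $\omega_p$. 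Note that this homotopy need not be colinear; only the map $\omega_p$ itself is required to be strict.
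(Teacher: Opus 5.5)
Your strategy is the paper's for $n\geq 2$ and for the comparison with $\omega_p^{\mathrm{Br}}$. You take the colinear extension with zero fibre component and use colinearity of the defect to force it into the form $b_n\otimes z_n$. You kill the cycle $z_n$ using $H_{n-1}(F_n)=0$, transport along $(u_\sigma,v_\sigma)$, and build the homotopy to Brown's inverse from $H_q(T_{\varphi_n})=0$ for $q>0$.

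\textbf{The gap is in your base case $n=1$.} There you assert that $F_1$ is path connected, so that $\tilde H_0(F_1)=0$. This is false. $F_n$ is the homotopy fibre of $q_n\colon\Delta^n\to B_n$, so $F_n\simeq\Omega B_n$ with $B_n\simeq\bigvee^n S^1$; this is exactly the comparison behind the first vanishing in \eqref{equniversalacyclic}. Hence $\pi_0(F_n)$ is a free group of rank $n$, and \eqref{equniversalacyclic} only says something for $q>0$. In particular $\pi_0(F_1)\cong\mathbb Z$, and the two vertices $\zeta_1(e_0)$ and $\zeta_1(e_1)$ already lie in different components of $F_1$. Knowing that $\bar c$ is a $0$--cycle of augmentation zero therefore does not give $\bar c=\partial x$, and your degree-one step is unjustified as written.

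\textbf{The conclusion still holds, but only for a specific geometric reason.} For $n=1$, $\bar c$ is a signed sum of four points:
\[
\zeta_1(e_0),\quad \zeta_1(e_1),\quad \lambda_1\bigl(p_1\zeta_1,\zeta_1(e_1)\bigr),\quad \lambda_1\bigl(T_0,\zeta_1(e_1)\bigr).
\]
The last two come from the cap term via $\varphi_1(T)=T-T_0$ in Theorem~\ref{theorembr}(i). $\bar c$ is a boundary because the lifting function pairs these points off into common path components:
\begin{itemize}
\item lifting the restricted loops $(p_1\zeta_1)|_{[0,t]}$ at $\zeta_1(t)$ gives a path in $F_1$ from $\zeta_1(e_0)$ to $\lambda_1(p_1\zeta_1,\zeta_1(e_1))$;
\item shrinking the constant loop $T_0$ to length zero shows that $\lambda_1(T_0,x)$ lies in the component of $x$.
\end{itemize}

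\textbf{How to repair it.} Either carry out this check explicitly, or do what the paper does: keep Brown's explicit maps in dimensions $0$ and $1$, which already have the colinear form \eqref{eqcofreeomega}, and start the correction at $n=2$, where $H_{n-1}(F_n)=0$ genuinely applies. Your comparison step needs no change, since it only uses $H_q(T_{\varphi_n})=0$ for $q\geq 1$.
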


\begin{proof}
We refine Brown's inverse by induction on simplex dimension. In dimensions $0$ and $1$ we retain Brown's original choices; their formulas have the form~\eqref{eqcofreeomega}, hence are colinear.

Assume that the map has been constructed below dimension $n\ge2$ and is there both colinear and a chain map. On the universal simplex $\zeta_n$, first set the still-undetermined fibre component equal to zero, $\bar{\widetilde\omega}_n(\zeta_n)=0$, and let $\widetilde\omega_n$ be the corresponding colinear extension given by~\eqref{eqcofreeformula}. Consider the degree $-1$ map $\mathcal D_n:=\partial_{\varphi_n}\widetilde\omega_n-\widetilde\omega_n\partial$ and set $D_n:=\mathcal D_n(\zeta_n)$. By~\eqref{eqdefectcolinear} and the Alexander--Whitney formula,
\[
\rho_{\varphi_n}(D_n)
=\sum_{i=0}^n(-1)^i(p_n)_\sharp\zeta_n[0,\ldots,i]\otimes\mathcal D_n\bigl(\zeta_n[i,\ldots,n]\bigr).
\]
Every term with $i>0$ vanishes by the induction hypothesis, so $\rho_{\varphi_n}(D_n)=b_n\otimes D_n$. Applying $\mathrm{id}_{K_{(n)}}\otimes\varepsilon_{K_{(n)}}\otimes\mathrm{id}_{L_{(n)}}$ gives $D_n=b_n\otimes z_n$, where $z_n:=(\varepsilon_{K_{(n)}}\otimes\mathrm{id}_{L_{(n)}})(D_n)\in C_{n-1}(F_n)$.

The element $D_n$ is a cycle: since $\widetilde\omega_n$ agrees with the previously constructed map on $\partial\zeta_n$,
\[
\partial_{\varphi_n}D_n
=-\partial_{\varphi_n}\omega_p(\partial\zeta_n)
=-\omega_p(\partial^2\zeta_n)=0.
\]
Because $\varphi_n$ vanishes on the distinguished vertex, $\partial_{\varphi_n}(b_n\otimes z_n)=b_n\otimes\partial z_n$, hence $\partial z_n=0$. By~\eqref{equniversalacyclic}, choose $y_n\in C_n(F_n)$ with $\partial y_n=-z_n$. Replacing the fibre component by $\bar\omega_n(\zeta_n)=y_n$, i.e.
\[
\omega_n(\zeta_n)=\widetilde\omega_n(\zeta_n)+b_n\otimes y_n,
\]
preserves colinearity and kills $D_n$. Transporting this fixed choice along Brown's model maps $(u_\sigma,v_\sigma)$ defines the corrected value on every singular $n$--simplex $\sigma$. Induction therefore produces a strict dg $K$--comodule map $\omega_p:C\to T_\varphi$.

Finally, compare $\omega_p$ with Brown's original inverse $\omega_p^{\mathrm{Br}}$. If an ordinary chain homotopy $H$ has been constructed below dimension $n$, then
\[
c_n=(\omega_n-\omega_n^{\mathrm{Br}})(\zeta_n)-H(\partial\zeta_n)
\]
is a cycle. The second vanishing in~\eqref{equniversalacyclic} provides $w_n$ with $\partial_{\varphi_n}w_n=c_n$; setting $H(\zeta_n)=w_n$ and transporting this choice gives the next stage. Thus $\omega_p\simeq\omega_p^{\mathrm{Br}}$ as ordinary chain maps. Since $\omega_p^{\mathrm{Br}}$ is a chain homotopy inverse of Brown's forward map, $\omega_p$ is a chain homotopy equivalence. No comodule compatibility of this comparison homotopy is needed.
\end{proof}

\begin{corollary}\label{corbrowncomoduledual}
Let $\Bbbk$ be a field and retain the hypotheses of Lemma~\ref{lembrowncomodule}. After tensoring with $\Bbbk$, the morphism
\[
\omega_p:C_*(E|F;\Bbbk)
\longrightarrow
\big(C_*(B|b_0;\Bbbk)\otimes C_*(F;\Bbbk),\partial_\varphi\big)
\]
is a strict morphism of left dg $C_*(B|b_0;\Bbbk)$--comodules and a chain homotopy equivalence. Consequently, its degreewise dual
\[
\omega_p^\vee:
\Big(\big(C_*(B|b_0;\Bbbk)\otimes C_*(F;\Bbbk)\big)^\vee,\delta_\varphi\Big)
\longrightarrow
C^*(E|F;\Bbbk)
\]
is a quasi--isomorphism of differential graded $C^*(B|b_0;\Bbbk)$--modules.
\end{corollary}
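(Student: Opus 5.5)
The corollary follows from Lemma~\ref{lembrowncomodule} by extending scalars and then dualizing degreewise, with Lemma~\ref{modlem} supplying the module structures. Three things have to be checked: the comodule statement over $\Bbbk$, that the dual is a module morphism, and that it is a quasi-isomorphism.

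\emph{Step 1: extension of scalars.} Apply $-\otimes_{\mathbb Z}\Bbbk$ to $\omega_p$. Singular chain complexes are free over $\mathbb Z$, so there are canonical identifications
\[
(K\otimes L)\otimes\Bbbk\cong C_*(B|b_0;\Bbbk)\otimes_\Bbbk C_*(F;\Bbbk).
\]
Under these identifications:
\begin{itemize}
\item the twisted differential $\partial_\varphi\otimes\Bbbk$ is the twisted differential of the twisting cochain $\varphi\otimes\Bbbk$ into $C_*(\Omega B;\Bbbk)$;
\item the coactions $\rho_E$ and $\rho_\varphi$ become the corresponding $\Bbbk$-coactions.
\end{itemize}
The identity $\rho_\varphi\omega_p=(\mathrm{id}_K\otimes\omega_p)\rho_E$ is an equality of $\mathbb Z$-linear maps, so it survives tensoring. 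A chain homotopy equivalence over $\mathbb Z$ remains one after tensoring, since the homotopies tensor as well.

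\emph{Step 2: module structure of the dual.} By Lemma~\ref{modlem}, with $A=C_*(\Omega B;\Bbbk)$, the source
\[
\big((K\otimes L)^\vee,\delta_\varphi\big)
\]
is a dg $K^\vee$-module, and its action is dual to $\Delta_K\otimes\mathrm{id}_L$. The module structure on $C^*(E|F;\Bbbk)$ is the dual of $\rho_{E}=(p_\sharp\otimes\mathrm{id})AW_E$ through the Koszul map. That this dual is a dg module follows exactly as in the proof of Lemma~\ref{modlem}: $\rho_E$ is a chain map and coassociative.

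Dualizing the colinearity identity, and using naturality of the canonical map $K^\vee\otimes D^\vee\to(K\otimes D)^\vee$ with respect to $\mathrm{id}\otimes\omega_p$, gives
\[
\omega_p^\vee(b\cdot f)=b\cdot\omega_p^\vee(f).
\]
Here there is one sign to check. The Koszul sign $(-1)^{|f||k|}$ appears identically on both sides because $\omega_p$ has degree zero. This is the same argument as the second half of Lemma~\ref{lemtheta}.

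Compatibility with differentials is immediate from the definition $\delta(f)=(-1)^{|f|}f\circ\partial$, together with the fact that $\omega_p$ is a degree-zero chain map.

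\emph{Step 3: quasi-isomorphism.} Degreewise dualization $\mathrm{Hom}_\Bbbk(-,\Bbbk)$ is an additive functor. It therefore sends chain homotopies to cochain homotopies, up to the sign convention of Definition~\ref{defdual}: a homotopy $H$ dualizes to $\pm H^\vee$ with a degree-dependent sign that is consistent. Hence $\omega_p^\vee$ is a cochain homotopy equivalence, and in particular a quasi-isomorphism.

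The only real obstacle is the sign bookkeeping in Step 2, namely confirming that the Koszul convention makes the dual of a degree-zero colinear chain map strictly $K^\vee$-linear. I would handle it by evaluating both sides on an element $k\otimes x$ in the same way as in Lemma~\ref{modlem}.
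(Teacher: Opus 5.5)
Your proposal is correct and follows the paper's route. The paper's proof is the single sentence ``tensor the chain-map, comodule, and chain-homotopy identities with $\Bbbk$ and then dualize,'' and your three steps spell out exactly that, including the $K^\vee$-linearity check via the Koszul map (as in Lemma~\ref{lemtheta}) and the sign-adjusted dual homotopy, with no finite-type hypothesis needed.
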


\begin{proof}
Tensor the chain-map, comodule, and chain-homotopy identities with $\Bbbk$ and then dualize. No finite-type hypothesis is required.
\end{proof}

\subsection{The Based Cochain Model}\label{ssecbasedmodel}
We now combine the algebraic results of Section~\ref{secdttp} with the preceding chain-level constructions. Model existence and semifreeness are logically separate: the based differential graded module model exists without degreewise nilpotence, whereas degreewise nilpotence will be used only to prove semifreeness.

\begin{theorem}\label{theorembasedwt}
Let
\[
F\xrightarrow{\hphantom{p}}E\xrightarrow{p}B
\]
be a weakly transitive fibration with $(B,b_0)$ path--connected. Set $K=C_*(B|b_0;\Bbbk)$ and $L=C_*(F;\Bbbk)$; then $K^\vee=C^*(B|b_0;\Bbbk)$. There exists a twisting cochain
\[
\psi:K\longrightarrow\mathcal E(L)
\]
and a quasi--isomorphism of differential graded $K^\vee$--modules
\[
\zeta_p:\Big((K\otimes H_*(F;\Bbbk))^\vee,\delta_\psi\Big)
\longrightarrow C^*(E;\Bbbk).
\]
More precisely,
\[
\zeta_p=h_E^\vee\circ\omega_p^\vee\circ\theta^\vee.
\]
If $H_*(F;\Bbbk)$ is of finite type, the source is canonically identified with
\[
\big(K^\vee\otimes H^*(F;\Bbbk),\delta_\psi\big).
\]
\end{theorem}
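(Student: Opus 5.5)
The plan is to assemble $\zeta_p$ as the composite of three quasi--isomorphisms of differential graded $K^\vee$--modules, each produced earlier in the paper, after tensoring Brown's integral construction with $\Bbbk$.

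\emph{Step 1: the twisting cochains.} First apply Theorem~\ref{theorembr} to the weakly transitive fibration $p$. This gives Brown's twisting cochain $\varphi=\varphi_p$ with values in $A=C_*(\Omega B)$. After tensoring with $\Bbbk$, $L=C_*(F;\Bbbk)$ becomes an augmented dg $A$--module through the action $\alpha\cdot x=\lambda(\alpha,x)$. It is also a $\Bbbk$--dgc via Alexander--Whitney, and $K$ is coaugmented by the constant simplex at $b_0$. Hence all of the standing hypotheses of Section~\ref{secdttp} hold. Since $\Bbbk$ is a field, every complex involved is free, so Theorem~\ref{theoremb2'} applies. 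It produces $\psi\colon K\to\mathcal E(L)=\mathcal E(F)$ together with Brown's map $\theta$.

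\emph{Step 2: the three module maps.} There are three maps to compose.
\begin{itemize}
\item[(a)] By Lemma~\ref{lemtheta}, the map $\theta^\vee\colon((K\otimes H_*(F;\Bbbk))^\vee,\delta_\psi)\to((K\otimes L)^\vee,\delta_\varphi)$ is a quasi--isomorphism of dg $K^\vee$--modules.
\item[(b)] By Corollary~\ref{corbrowncomoduledual}, the map $\omega_p^\vee\colon((K\otimes L)^\vee,\delta_\varphi)\to C^*(E|F;\Bbbk)$ is a quasi--isomorphism of dg $K^\vee$--modules. Here the module structure on the target is dual to $\rho_{E|F}=(p_\sharp\otimes\mathrm{id})AW_E$.
\item[(c)] By Corollary~\ref{corhemodule}, the map $h_E^\vee\colon C^*(E|F;\Bbbk)\to C^*(E;\Bbbk)$ is a quasi--isomorphism of dg $K^\vee$--modules. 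Here the target carries the structure restricted along $p^\sharp h_B^\vee$, and $h_B,h_E$ are chosen as in Lemma~\ref{lemfibcompatible}.
\end{itemize}
The composite $\zeta_p=h_E^\vee\omega_p^\vee\theta^\vee$ is then a quasi--isomorphism of dg $K^\vee$--modules, because a composite of such maps is again one. The finite-type clause follows directly from Lemma~\ref{lemiden} and the identification $\Phi$ made after it, which is compatible with the module structures.

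\emph{Step 3: the main obstacle.} The difficulty I expect is checking that the $K^\vee$--module structures on the intermediate objects coincide, so that the maps really compose as module maps and not merely as cochain maps.

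\begin{itemize}
\item[(i)] \emph{The object $(K\otimes L)^\vee$.} The structure used in Lemma~\ref{lemtheta} and the one used in Corollary~\ref{corbrowncomoduledual} are both defined through the coaction $\Delta_K\otimes\mathrm{id}_L=\rho_\varphi$ via Lemma~\ref{modlem}. So they agree, provided both use the same Koszul map $K^\vee\otimes(K\otimes L)^\vee\to(K\otimes(K\otimes L))^\vee$. I would check this sign convention explicitly.
\item[(ii)] \emph{The object $C^*(E|F;\Bbbk)$.} The structure from $\omega_p$ uses $\rho_E=(p_\sharp\otimes\mathrm{id})AW_E$ in Lemma~\ref{lembrowncomodule}. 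This is exactly $\rho_{E|F}$ in Corollary~\ref{corhecomodule}.
\item[(iii)] \emph{The object $C^*(E;\Bbbk)$.} I would confirm that the structure induced by the coaction $(h_Bp_\sharp\otimes\mathrm{id})AW_E$ equals restriction of scalars along $p^\sharp h_B^\vee$, with signs in the convention of Remark~\ref{rmksignedcochains}. This holds because the dual of Alexander--Whitney is the cup product, and $(h_Bp_\sharp)^\vee=p^\sharp h_B^\vee$.
\end{itemize}

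Once these identifications are made, the theorem follows.
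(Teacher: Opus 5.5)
Your proposal is correct and follows the paper's proof. There, too, $\zeta_p$ is assembled as the composite of $\theta^\vee$ (Lemma~\ref{lemtheta}), $\omega_p^\vee$ (Corollary~\ref{corbrowncomoduledual}) and $h_E^\vee$ (Corollary~\ref{corhemodule}), and the finite-type clause comes from Lemma~\ref{lemiden}. Your Step~3 checks that the intermediate $K^\vee$--module structures agree are left implicit in the paper, and they are consistent with how those structures are defined there.
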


\begin{proof}
Brown's twisting cochain gives the twisted tensor product $(K\otimes L,\partial_\varphi)$. By Theorem~\ref{theoremb2'}, there is a twisting cochain $\psi$ and a chain homotopy equivalence
\[
\theta:(K\otimes L,\partial_\varphi)
\longrightarrow
(K\otimes H_*(L),\partial_\psi).
\]
By Lemma~\ref{lemtheta}, $\theta$ is a strict morphism of left dg $K$--comodules. Hence
\[
\theta^\vee:\Big((K\otimes H_*(L))^\vee,\delta_\psi\Big)
\longrightarrow
\Big((K\otimes L)^\vee,\delta_\varphi\Big)
\]
is a quasi--isomorphism of differential graded $K^\vee$--modules. Corollary~\ref{corbrowncomoduledual} gives the strict $K^\vee$--linear quasi--isomorphism $\omega_p^\vee$, and Corollary~\ref{corhemodule} gives the strict $K^\vee$--linear quasi--isomorphism $h_E^\vee$. Their composite is $\zeta_p$. The finite-type identification is Lemma~\ref{lemiden}.
\end{proof}

The following transfer principle isolates the amount of fibrewise information needed to transport a based model.

\begin{proposition}\label{propfibertransfer}
Let
\[
F\xrightarrow{\hphantom{p}}E\xrightarrow{p}B,
\qquad
F'\xrightarrow{\hphantom{p'}}E'\xrightarrow{p'}B
\]
be Serre fibrations over the same path--connected based space $(B,b_0)$. Suppose that
\[
s:E\longrightarrow E'
\]
is a map over $B$, $p's=p$, and an ordinary homotopy equivalence. Suppose moreover that its restriction to the fibres induces an isomorphism
\[
\alpha:=(s_F)_*:H_*(F;\Bbbk)\xrightarrow{\cong}H_*(F';\Bbbk).
\]
Set $K=C_*(B|b_0;\Bbbk)$, so that $K^\vee=C^*(B|b_0;\Bbbk)$. If a twisting cochain
\[
\psi':K\longrightarrow\mathcal E(F')
\]
gives a quasi--isomorphism of differential graded $K^\vee$--modules
\[
\zeta':\Big((K\otimes H_*(F';\Bbbk))^\vee,\delta_{\psi'}\Big)
\longrightarrow C^*(E';\Bbbk),
\]
then there is a twisting cochain
\[
\psi:K\longrightarrow\mathcal E(F)
\]
and a quasi--isomorphism of differential graded $K^\vee$--modules
\[
\zeta:\Big((K\otimes H_*(F;\Bbbk))^\vee,\delta_\psi\Big)
\longrightarrow C^*(E;\Bbbk).
\]
\end{proposition}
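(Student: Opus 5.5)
The plan is to transport $\psi'$ along the isomorphism $\alpha$ and compose $\zeta'$ with the cochain map induced by $s$. First, define
\[
\psi:=\alpha^{-1}\circ\psi'(-)\circ\alpha:K\longrightarrow\mathcal E(F),
\qquad
\psi(k)=\alpha^{-1}\psi'(k)\alpha .
\]
Conjugation by the degree-zero isomorphism $\alpha$ is an isomorphism of graded algebras $\mathcal E(F')\cong\mathcal E(F)$. It respects composition, the unit and the (trivial) differential. Hence $\psi$ satisfies the twisting-cochain identity~\eqref{tweq} and vanishes on $K_0$, simply because $\psi'$ does. The normalization of the tensor augmentation is likewise transported.

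Next, consider
\[
A:=\mathrm{id}_K\otimes\alpha:\big(K\otimes H_*(F;\Bbbk),\partial_\psi\big)\longrightarrow\big(K\otimes H_*(F';\Bbbk),\partial_{\psi'}\big).
\]
By the cap product formula~\eqref{eqcapconvention}, we have $(\mathrm{id}\otimes\alpha)\bigl((k\otimes x)\cap\psi\bigr)=\sum k_{(1)}\otimes\psi'(k_{(2)})\alpha(x)$. Therefore $A$ is a chain isomorphism. It is also visibly a morphism of left $K$--comodules, since it acts only on the second factor. By Lemma~\ref{modlem} (with the coaction $\Delta_K\otimes\mathrm{id}$, exactly as in Lemma~\ref{lemtheta}), the dual $A^\vee$ is an isomorphism of differential graded $K^\vee$--modules.

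Finally, set
\[
\zeta:=s^\sharp\circ\zeta'\circ (A^\vee)^{-1}.
\]
Here $s^\sharp:C^*(E';\Bbbk)\to C^*(E;\Bbbk)$ is the induced cochain map. It is a quasi-isomorphism because $s$ is a homotopy equivalence, so it induces a chain homotopy equivalence of singular chains, and dualizing over the field $\Bbbk$ preserves this. Since $p's=p$, naturality gives $s^\sharp(p'^\sharp(c)\cdot f)=p^\sharp(c)\cdot s^\sharp f$, using that $s^\sharp$ is multiplicative. This multiplicativity holds because $s_\sharp$ commutes with Alexander--Whitney, and the signed convention of Remark~\ref{rmksignedcochains} is natural. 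Restricting scalars along $h_B^\vee$, the map $s^\sharp$ is therefore $K^\vee$--linear. Then $\zeta$ is a composite of $K^\vee$--linear quasi-isomorphisms.

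I expect no serious obstacle; the argument is formal. The only point requiring care is the bookkeeping of Koszul signs in the dualized conventions of Definition~\ref{defdual}. One must verify that $A^\vee$ and $s^\sharp$ intertwine the dual differentials and the module actions without stray signs. This holds since both maps have degree zero and are duals of degree-zero comodule chain maps. Note also that only the homology isomorphism on the fibres is used, not any fibre homotopy equivalence.
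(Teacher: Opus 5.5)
Your proposal is correct and follows the paper's proof essentially step by step. Like the paper, you conjugate $\psi'$ by $\alpha$, use $\mathrm{id}_K\otimes\alpha$ as a comodule chain isomorphism whose inverse dual is a $K^\vee$-module isomorphism, and then compose with $\zeta'$ and the $K^\vee$-linear quasi-isomorphism $s^\sharp$. The one point the paper makes explicit that you leave implicit is why the augmentation normalization transports: this uses $\varepsilon_{F'}\alpha=\varepsilon_F$, which holds because $\alpha$ is induced by a map of spaces.
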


\begin{proof}
Define
\[
\psi(k):=\alpha^{-1}\psi'(k)\alpha.
\]
Conjugation by $\alpha$ preserves composition in the graded endomorphism algebra and hence preserves the twisting-cochain identity. Since $\alpha$ is induced by a map of fibres, it preserves the homology augmentation, $\varepsilon_{F'}\alpha=\varepsilon_F$. Furthermore,
\[
(\varepsilon_K\otimes\varepsilon_{F'})(\mathrm{id}_K\otimes\alpha)\partial_\psi
=(\varepsilon_K\otimes\varepsilon_{F'})\partial_{\psi'}(\mathrm{id}_K\otimes\alpha)=0,
\]
so $(\varepsilon_K\otimes\varepsilon_F)\partial_\psi=0$. Thus the tensor-augmentation compatibility is preserved. Moreover,
\[
\mathrm{id}_K\otimes\alpha:(K\otimes H_*(F;\Bbbk),\partial_\psi)
\longrightarrow
(K\otimes H_*(F';\Bbbk),\partial_{\psi'})
\]
is an isomorphism of twisted chain complexes and a strict isomorphism of left dg $K$--comodules. Therefore
\[
\widehat\alpha:=\big((\mathrm{id}_K\otimes\alpha)^\vee\big)^{-1}
\]
is an isomorphism of differential graded $K^\vee$--modules in the opposite direction on the duals.

Since $p's=p$, we have $s^\sharp(p')^\sharp=p^\sharp$. Thus $s^\sharp:C^*(E';\Bbbk)\to C^*(E;\Bbbk)$ is a strict $K^\vee$--module morphism, where both module structures are induced through $h_B^\vee$. It is a quasi--isomorphism because $s$ is a homotopy equivalence. Hence
\[
\zeta:=s^\sharp\circ\zeta'\circ\widehat\alpha
\]
has the required properties.
\end{proof}

\begin{theorem}\label{theorembasedmodel}
Let
\[
F\xrightarrow{\hphantom{p}}E\xrightarrow{p}B
\]
be a Serre fibration, where $(B,b_0)$ is path--connected. Set $K=C_*(B|b_0;\Bbbk)$; thus $K^\vee=C^*(B|b_0;\Bbbk)$.
Then there exists a twisting cochain
\[
\psi:K\longrightarrow\mathcal E(F)
\]
and a quasi--isomorphism of differential graded $K^\vee$--modules
\[
\zeta:\Big((K\otimes H_*(F;\Bbbk))^\vee,\delta_\psi\Big)
\xrightarrow{\simeq}
C^*(E;\Bbbk).
\]
If $H_*(F;\Bbbk)$ is of finite type, the source is canonically identified with
\[
\big(K^\vee\otimes H^*(F;\Bbbk),\delta_\psi\big).
\]
\end{theorem}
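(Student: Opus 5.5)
The plan is to reduce the general Serre fibration to the weakly transitive case of Theorem~\ref{theorembasedwt}, and then transport the model back with Proposition~\ref{propfibertransfer}. The reduction uses two successive replacements. First, form the mapping-path fibration $p^H\colon E^H\to B$ of Proposition~\ref{prophurewiczrep}. It is a Hurewicz fibration, so it admits a lifting function in Brown's sense. Second, apply Brown's transitive replacement (Proposition~\ref{proptransrep}) to $p^H$. This produces a transitive, hence weakly transitive, fibration $p^t\colon E^t\to B$ over the same based space $(B,b_0)$, with fibre $F^t=(p^t)^{-1}(b_0)$.

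Theorem~\ref{theorembasedwt} applied to $p^t$ gives a twisting cochain $\psi^t\colon K\to\mathcal E(F^t)$ and a quasi--isomorphism of differential graded $K^\vee$--modules
\[
\zeta^t\colon\Big((K\otimes H_*(F^t;\Bbbk))^\vee,\delta_{\psi^t}\Big)\xrightarrow{\simeq}C^*(E^t;\Bbbk).
\]
One point needs care here. The $K^\vee$--module structure on $C^*(E^t;\Bbbk)$ produced by Theorem~\ref{theorembasedwt} must be the one obtained from $(p^t)^\sharp$ by restriction along $h_B^\vee$. This holds because Corollary~\ref{corhemodule} uses exactly that structure. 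We also need $h_B$ to be a single fixed choice shared by all the fibrations over $B$ involved. Lemma~\ref{lemfibcompatible} lets us keep $h_B$ fixed and choose $h_{E^t}$ compatibly with it, so I would state this explicitly.

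Now transfer in two steps, applying Proposition~\ref{propfibertransfer} each time.

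\textbf{Step one: from $E^t$ to $E^H$.} The map $s\colon E^H\to E^t$ is a map over $B$ and an ordinary homotopy equivalence, with fibre homotopy inverse $r$. Its restriction $s_F$ to fibres over $b_0$ is a homotopy equivalence, so it induces an isomorphism on $H_*(-;\Bbbk)$. The proposition therefore yields a twisting cochain $\psi^H\colon K\to\mathcal E(F^H)$ and a model $\zeta^H$ for $C^*(E^H;\Bbbk)$.

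\textbf{Step two: from $E^H$ to $E$.} The map $j\colon E\to E^H$ is a map over $B$ and an ordinary homotopy equivalence. By Proposition~\ref{prophurewiczrep}, the restriction $j_{b_0}$ is a weak equivalence, hence a $\Bbbk$--homology isomorphism. A second application of Proposition~\ref{propfibertransfer} gives $\psi\colon K\to\mathcal E(F)$ and the quasi--isomorphism $\zeta$.

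Proposition~\ref{propfibertransfer} is stated for Serre fibrations, so I must check its hypotheses apply. Both $p^H$ and $p^t$ are Hurewicz fibrations and therefore Serre fibrations. The transfer argument itself only uses the fibrewise homology isomorphism, the condition that the maps lie over $B$, and the homotopy equivalence on total spaces.

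The finite-type statement follows immediately from Lemma~\ref{lemiden} applied to $L$ with $H_*(L)=H_*(F;\Bbbk)$. The twisting cochain $\psi$ is a twisting cochain into $\mathcal E(F)$, which is all the lemma needs.

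I expect the main obstacle to be bookkeeping of the module structures rather than any new construction. The comparison maps $s^\sharp$ and $j^\sharp$ must be strictly $K^\vee$--linear. This holds because $p^ts=p^H$ and $p^Hj=p$ hold on the nose, so $s^\sharp$ and $j^\sharp$ intertwine the maps $(p')^\sharp h_B^\vee$ strictly. Beyond that, I would only need to confirm that Brown's $s$ restricts to the chosen basepoint fibre as Proposition~\ref{proptransrep} asserts.
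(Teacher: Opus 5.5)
Your proposal is correct and follows the paper's proof: mapping-path replacement, Brown's transitive replacement, Theorem~\ref{theorembasedwt} applied to $p^t$, then transfer back via Proposition~\ref{propfibertransfer}, with Lemma~\ref{lemiden} giving the finite-type identification. The only difference is cosmetic. The paper applies the transfer once to the composite $\sigma=s\circ j\colon E\to E^t$, obtaining $\psi(k)=\alpha^{-1}\psi^t(k)\alpha$ with $\alpha=\sigma_{F*}$, whereas your two successive applications yield the same twisting cochain and model; your remark that a single $h_B$ must be fixed for all fibrations over $B$ is a point the paper leaves implicit.
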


\begin{proof}
First apply Proposition~\ref{prophurewiczrep} to obtain the mapping-path Hurewicz replacement
\[
p^H:E^H\longrightarrow B
\]
and the canonical map $j:E\to E^H$. The map $j$ is an ordinary homotopy equivalence over $\mathrm{id}_B$, and its restriction to the fibres induces an isomorphism
\[
(j_F)_*:H_*(F;\Bbbk)\xrightarrow{\cong}H_*(F^H;\Bbbk),
\qquad F^H=(p^H)^{-1}(b_0).
\]
Since $p^H$ is a Hurewicz fibration, Brown's transitive replacement applies. Let
\[
p^t:E^t\longrightarrow B
\]
be the resulting transitive fibration and $s:E^H\to E^t$ Brown's fibre homotopy equivalence over $B$. Its restriction $s_F:F^H\to F^t$ is a homotopy equivalence.

Consequently,
\[
\sigma:=s\circ j:E\longrightarrow E^t
\]
is an ordinary homotopy equivalence over $B$, while its restriction to the fibres induces an isomorphism
\[
\alpha:=\sigma_{F*}:H_*(F;\Bbbk)\xrightarrow{\cong}H_*(F^t;\Bbbk).
\]
The transitive fibration is weakly transitive, so Theorem~\ref{theorembasedwt} gives a twisting cochain $\psi^t$ and a based model for $E^t$. Proposition~\ref{propfibertransfer}, applied to $\sigma$, transports this model to $E$ and gives
\[
\psi(k)=\alpha^{-1}\psi^t(k)\alpha.
\]
The finite-type assertion follows from Lemma~\ref{lemiden}; no nilpotence condition has been used.
\end{proof}

Since conjugation by $\alpha$ preserves the twisting-cochain identity and the evaluation action, all algebraic results of Section~\ref{secdttp}, in particular Theorem~\ref{theoremsf}, apply to the transported twisting cochain $\psi$.

\subsection{Degreewise Nilpotence and Semifreeness}\label{ssecnil}
For the remainder of this section, set $K=C_*(B|b_0;\Bbbk)$, so that $K^\vee=C^*(B|b_0;\Bbbk)$, and retain the transported twisting cochain $\psi$ of Theorem~\ref{theorembasedmodel}.

\begin{definition}\label{defnilact}
Let $G$ be a group and let $M$ be a $\Bbbk$--vector space equipped with a linear left $G$--action. Define $M_0:=M$ and, for $i\geq1$,
\[
M_i:=\operatorname{span}_{\Bbbk}\{gm-m\mid g\in G,\ m\in M_{i-1}\}.
\]
which gives a descending filtration
\[
M=M_0\supset M_1\supset M_2\supset\cdots.
\]
The action is called {\it nilpotent} if $M_N=0$ for some $N>0$. The least such $N$ is called the nilpotence index of this action and is denoted by
\[
\mathrm{nil}(G,M).
\]
For a graded $\Bbbk$--vector space $V=\{V^n\}$ with a linear left $G$--action presevring degrees, the action is said to be {\it degreewise nilpotent} if $G$ acts nilpotently on each $V^n$.
\end{definition}

\begin{lemma}\label{lemnil}
Let $\psi$ be the twisting cochain in Theorem~\ref{theorembasedmodel}. Then for every singular $1$--simplex
\[
T\in C_1(B|b_0;\Bbbk)
\]
and every $x\in H_*(F;\Bbbk)$,
\begin{equation}\label{eqpsi1top}
\psi(T)x=([T]-1)x,
\end{equation}
where $[T]\in\pi_1(B,b_0)$ and the right-hand side is the usual action on the homology of the fibre. Consequently, $\psi$ satisfies $K_1$--nilpotence in Definition \ref{defk1nil} if and only if the action of $\pi_1(B)$ on $H_*(F;\Bbbk)$ is degreewise nilpotent.
\end{lemma}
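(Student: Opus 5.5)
We prove \eqref{eqpsi1top} first for the weakly transitive fibration $p^t$ and its cochain $\psi^t$, and then transport it along $\alpha$. For $k=T\in K_1$, formula \eqref{eqpsi} in Remark~\ref{rmkbr} (with $p=0$) reads
\[
\psi(T)x=-\theta'\big(\partial T\otimes\xi(x)-(T\otimes\xi(x))\cap\varphi\big)+\nu(\Delta'_K\otimes\mathrm{id})(T\otimes\xi(x)).
\]
Since $T$ is a loop at $b_0$, we have $\partial T=b_0-b_0=0$. The coproduct is $\Delta_K T=b_0\otimes T+T\otimes b_0$, so $\Delta'_K T=b_0\otimes T$, and then $\nu$ yields $\psi(b_0)\theta'(\cdots)=0$ because $\psi(K_0)=0$. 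For the cap product we get
\[
(T\otimes \xi x)\cap\varphi=b_0\otimes\varphi(T)\xi(x)+T\otimes\varphi(b_0)\xi(x).
\]
The second term vanishes since $\varphi$ vanishes on constant simplices. By Theorem~\ref{theorembr}(i), $\varphi(T)=T-T_0$ as $0$--chains of $\Omega B$. Hence
\[
\psi(T)x=\theta'\big(b_0\otimes(T\cdot\xi(x)-T_0\cdot\xi(x))\big)=\eta\big(T\cdot\xi(x)\big)-\eta\big(\xi(x)\big),
\]
using $\theta'(b_0\otimes l)=\eta(l)$ and the fact that the constant loop $T_0$ acts as the identity by weak transitivity. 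Because $\xi(x)$ is a cycle representing $x$ and $\eta$ induces the identity on homology, the first term is the class of the holonomy action of the loop $T$ on $x$. So $\psi^t(T)x=([T]-1)x$.

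One sign check is needed here: Brown's action $\alpha\cdot x=\lambda(\alpha,x)$ should agree with the paper's chosen convention for the $\pi_1$--action, left versus right. If it does not, the formula instead holds with $[T]^{-1}$. This does not affect the nilpotence conclusion, since the filtrations of Definition~\ref{defnilact} for $g$ and $g^{-1}$ coincide.

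Next we transport the formula. By Theorem~\ref{theorembasedmodel}, $\psi(T)=\alpha^{-1}\psi^t(T)\alpha$, where $\alpha=\sigma_{F*}$ and $\sigma=s\circ j$ is a map over $B$. Fibre transport for the three fibrations is compatible with such a map, because the $\pi_1$--action on fibre homology is natural for maps of fibrations over $\mathrm{id}_B$ that are fibre homology isomorphisms. Therefore $\alpha^{-1}([T]-1)\alpha=[T]-1$ on $H_*(F;\Bbbk)$. I expect this naturality step, together with pinning down that Brown's lifting-function holonomy for $p^t$ really is the usual action, to be the main obstacle. The natural approach is to compare the Hurewicz lifting in $E^H$ with the Serre lifting in $E$ via $j$, and then use that $s$ and $r$ are fibre homotopy equivalences over $B$.

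Finally we prove the equivalence. Every element of $\pi_1(B,b_0)$ is represented by some $T\in C_1(B|b_0)$, and $K_1$ is spanned by such $T$. So the span of $\{\psi(k)y\mid k\in K_1,\ y\in M\}$ equals the span of $\{gy-y\mid g\in\pi_1,\ y\in M\}$. By induction, the filtration $\Gamma_{i,s}$ from the proof of Theorem~\ref{theoremsf} coincides with $M_i$ of Definition~\ref{defnilact} for $M=H_s(F;\Bbbk)$. Thus vanishing of $\psi(k_1)\cdots\psi(k_N)x$ for all $k_j\in K_1$ and all $x\in H_s(F;\Bbbk)$ is equivalent to $M_N=0$. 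This gives $K_1$--nilpotence if and only if degreewise nilpotence, with the same bounds $N_s$.
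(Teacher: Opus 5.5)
Your proposal is correct and follows the paper's proof: you evaluate Brown's formula \eqref{eqpsi} in degree one for $p^t$, conjugate by the $\pi_1(B)$--equivariant isomorphism $\alpha$, and identify $\Gamma_{i,s}$ with $M_i$ using that every class in $\pi_1(B,b_0)$ is represented by a based $1$--simplex. Your explicit computation (vanishing of $\partial T$ and of the $\nu$--term, reduction to $\eta(\varphi(T)\xi(x))$) fills in what the paper only summarizes; one small imprecision is that weak transitivity gives $\lambda(e_{b_0},-)=\mathrm{id}$ only for the length-zero path, so the constant simplex $T_0$ acts by a map homotopic to the identity (via constant paths of shrinking length) rather than equal to it, which still suffices because $\eta$ sends the cycle $T_0\cdot\xi(x)$ to its homology class $x$.
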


\begin{proof}
First consider the transitive replacement $p^t:E^t\to B$ and its twisting cochain $\psi^t$. For a singular $1$--simplex $T\in C_1(B|b_0;\Bbbk)$, Brown's inductive construction of $\psi^t$ gives
\[
\psi^t(T)y=([T]-1)y
\qquad(y\in H_*(F^t;\Bbbk)).
\]
Indeed, in degree one Brown's formula for the homology replacement reduces to the action of $\varphi(T)$, and Theorem~\ref{theorembr} gives $\varphi(T)=T-T_0$.

Let
\[
\alpha:H_*(F;\Bbbk)\xrightarrow{\cong}H_*(F^t;\Bbbk)
\]
be the fibre homology isomorphism used in Theorem~\ref{theorembasedmodel}. The naturality of fundamental group actions with respect to maps of fibrations over $\mathrm{id}_B$ makes $\alpha$ $\pi_1(B)$--equivariant. Since $\psi(T)=\alpha^{-1}\psi^t(T)\alpha$,
\[
\psi(T)x=\alpha^{-1}\big(([T]-1)\alpha(x)\big)=([T]-1)x.
\]

Suppose first that the $\pi_1(B)$--action is degreewise nilpotent and set
\[
N_s=\mathrm{nil}\big(\pi_1(B),H_s(F;\Bbbk)\big).
\]
For $k=\sum_ja_jT_j\in K_1$ and $x\in H_s(F;\Bbbk)$, Equation~\eqref{eqpsi1top} gives
\[
\psi(k)x=\sum_ja_j([T_j]-1)x,
\]
so $\psi(k)M_{i-1}\subset M_i$. Therefore
\[
\psi(k_1)\cdots\psi(k_{N_s})x=0
\]
for all $k_i\in K_1$ and all $x\in H_s(F;\Bbbk)$, proving $K_1$--nilpotence.

Conversely, suppose $K_1$--nilpotence holds with exponent $N_s$ for $H_s(F;\Bbbk)$. Given $g_1,\ldots,g_{N_s}\in\pi_1(B)$, choose based singular $1$--simplices $T_i$ representing them. Then
\[
(g_1-1)\cdots(g_{N_s}-1)x
=\psi(T_1)\cdots\psi(T_{N_s})x=0.
\]
It follows inductively from Definition~\ref{defnilact} that $M_{N_s}=0$. Hence the action is degreewise nilpotent.
\end{proof}

\begin{theorem}\label{coretheorem1}
Assume that $H_*(F;\Bbbk)$ is of finite type. If the fundamental group $\pi_1(B)$ acts degreewise nilpotently on $H_*(F;\Bbbk)$, then
\[
\big(K^\vee\otimes H^*(F;\Bbbk),\delta_\psi\big)
\]
is a semifree differential graded $K^\vee$--module.
\end{theorem}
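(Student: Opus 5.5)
The plan is to combine Lemma~\ref{lemnil} with Theorem~\ref{theoremsf}. No new algebra is needed; the work is in checking that the hypotheses of Theorem~\ref{theoremsf} hold for the transported twisting cochain $\psi$ of Theorem~\ref{theorembasedmodel}.

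First, we place ourselves in the setting of Section~\ref{secdttp}. Take $K=C_*(B|b_0;\Bbbk)$ and $L=C_*(F^t;\Bbbk)$ for the transitive replacement. By Theorem~\ref{theorembasedmodel}, $\psi=\alpha^{-1}\psi^t\alpha$ is a twisting cochain $K\to\mathcal E(F)$. As remarked after that theorem, conjugation by $\alpha$ preserves the twisting-cochain identity and the evaluation action. Hence Lemmas~\ref{lemiden}, \ref{lemdelta}, \ref{difflem} and Theorem~\ref{theoremsf} apply verbatim with $H_*(L)$ replaced by $H_*(F;\Bbbk)$ and $\mathcal E(L)$ replaced by $\mathcal E(F)$. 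Concretely, the module $\big(K^\vee\otimes H^*(F;\Bbbk),\delta_\psi\big)$ is isomorphic to $\big(K^\vee\otimes H^*(F^t;\Bbbk),\delta_{\psi^t}\big)$ via $\mathrm{id}\otimes\alpha^\vee$. This is a strict $K^\vee$--module isomorphism by the proof of Proposition~\ref{propfibertransfer}, so semifreeness can be checked on either side.

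Second, we verify the two hypotheses of Theorem~\ref{theoremsf}. Finite type is assumed, and $H_*(F^t;\Bbbk)\cong H_*(F;\Bbbk)$, so it holds on both sides. For $K_1$--nilpotence, we invoke Lemma~\ref{lemnil}: since $\pi_1(B,b_0)$ acts degreewise nilpotently on $H_*(F;\Bbbk)$, $\psi$ satisfies $K_1$--nilpotence with exponents $N_s=\mathrm{nil}(\pi_1(B),H_s(F;\Bbbk))$. The key input is formula~\eqref{eqpsi1top}, $\psi(T)x=([T]-1)x$, extended linearly over $K_1$. It shows that $\psi(k)$ maps the $i$-th step $M_{i-1}$ of the augmentation filtration of Definition~\ref{defnilact} into $M_i$.

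Third, we apply Theorem~\ref{theoremsf}. It produces the filtration $M(s)=K^\vee\otimes H^{\leq s}(F;\Bbbk)$, refined on each subquotient by the annihilators $\Gamma_{t,s}^\perp$. By Lemma~\ref{lemsemifree}, this gives semifreeness over $K^\vee=C^*(B|b_0;\Bbbk)$.

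The only potentially delicate point is the identity $\psi(T)=[T]-1$ in degree one, which is what lets us pass from the topological action to $K_1$--nilpotence. It rests on Brown's formula~\eqref{eqpsi} in degree one together with $\varphi(T)=T-T_0$ from Theorem~\ref{theorembr}, and on the $\pi_1$--equivariance of $\alpha$. Both are already established in Lemma~\ref{lemnil}, so the present theorem follows directly, and no step is a genuine obstacle.
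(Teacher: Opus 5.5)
Your proposal is correct and follows the paper's proof: Lemma~\ref{lemnil} turns degreewise nilpotence of the $\pi_1(B)$--action into $K_1$--nilpotence of $\psi$, and Theorem~\ref{theoremsf} then gives semifreeness. Your extra discussion of transport along $\alpha$, including the option of checking semifreeness on the $F^t$ side via $\mathrm{id}\otimes\alpha^\vee$, makes explicit what the paper records in the remark following Theorem~\ref{theorembasedmodel}.
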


\begin{proof}
By Lemma~\ref{lemnil}, the degreewise nilpotence of the $\pi_1(B)$--action is equivalent to the $K_1$--nilpotence condition of Definition~\ref{defk1nil}. The conclusion follows from Theorem~\ref{theoremsf}.
\end{proof}

\begin{theorem}\label{coretheorem2}
Assume that $H_*(F;\Bbbk)$ is of finite type and that the fundamental group $\pi_1(B)$ acts degreewise nilpotently on $H_*(F;\Bbbk)$. Then the quasi--isomorphism
\[
\zeta:\big(K^\vee\otimes H^*(F;\Bbbk),\delta_\psi\big)
\longrightarrow C^*(E;\Bbbk)
\]
constructed in Theorem~\ref{theorembasedmodel} is a semifree $K^\vee$--module resolution of $C^*(E;\Bbbk)$.
\end{theorem}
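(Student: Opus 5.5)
The plan is to combine the two results already established: the existence of the quasi-isomorphism, and the semifreeness of its source.

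First, by Theorem~\ref{theorembasedmodel}, the map
\[
\zeta\colon\big((K\otimes H_*(F;\Bbbk))^\vee,\delta_\psi\big)\longrightarrow C^*(E;\Bbbk)
\]
is a quasi-isomorphism of differential graded $K^\vee$--modules. Here $C^*(E;\Bbbk)$ carries the $K^\vee$--module structure obtained by restricting scalars along $p^\sharp h_B^\vee$. Since $H_*(F;\Bbbk)$ is of finite type, Lemma~\ref{lemiden} identifies the source with $\big(K^\vee\otimes H^*(F;\Bbbk),\delta_\psi\big)$. The remark after Lemma~\ref{lemiden} makes this identification $\Phi$ an isomorphism of dg $K^\vee$--modules. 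Hence $\zeta\circ\Phi$ is again a $K^\vee$--linear quasi-isomorphism, and this is the map denoted $\zeta$ in the statement.

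Second, Theorem~\ref{coretheorem1} gives semifreeness of the source. It applies because degreewise nilpotence of the $\pi_1(B)$--action is equivalent to $K_1$--nilpotence of the transported $\psi$ (Lemma~\ref{lemnil}). Theorem~\ref{theoremsf} then applies to $\psi$, since conjugation by $\alpha$ preserves the twisting-cochain identity. By the definition of a semifree resolution, a quasi-isomorphism of $(K^\vee,\delta_K)$--modules whose source is semifree is a semifree resolution, which concludes the argument.

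The only step requiring care, and the main obstacle, is bookkeeping of module structures. One must check that the $K^\vee$--module structure on $(K\otimes H_*(F))^\vee$ used in Theorem~\ref{coretheorem1}, namely the coaction-induced structure of Lemma~\ref{modlem} transported through $\Phi$, is the same one for which $\zeta$ is linear. This holds because every map in the composite
\[
\zeta=s^\sharp\circ h_E^\vee\circ\omega_p^\vee\circ\theta^\vee\circ\widehat\alpha
\]
was shown to be strictly $K^\vee$--linear for exactly these structures:
\begin{itemize}
\item Lemma~\ref{lemtheta} for $\theta^\vee$;
\item Corollary~\ref{corbrowncomoduledual} for $\omega_p^\vee$;
\item Corollary~\ref{corhemodule} for $h_E^\vee$;
\item Proposition~\ref{propfibertransfer} for $\widehat\alpha$ and $s^\sharp$.
\end{itemize}
With that consistency verified, the theorem follows immediately.
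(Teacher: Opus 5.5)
Your proposal is correct and is exactly the paper's argument: Theorem~\ref{theorembasedmodel} supplies the $K^\vee$--linear quasi--isomorphism without any nilpotence hypothesis, and Theorem~\ref{coretheorem1} (via Lemma~\ref{lemnil} and Theorem~\ref{theoremsf}) makes its source semifree. Your extra bookkeeping of module structures is consistent with the paper, with one notational caveat: in your composite, $h_E^\vee$, $\omega_p^\vee$ and $\theta^\vee$ belong to the transitive replacement $p^t$, and $s^\sharp$ should be read as $\sigma^\sharp=(s\circ j)^\sharp$.
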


\begin{proof}
Theorem~\ref{theorembasedmodel} gives the $K^\vee$--linear quasi--isomorphism $\zeta$ without any nilpotence assumption, and Theorem~\ref{coretheorem1} makes its source semifree under the present hypothesis.
\end{proof}

\subsection{\texorpdfstring{Base Change to $C^*(B;\Bbbk)$}{Base Change to Ordinary Cochains}}\label{ssecbc}
Set $K=C_*(B|b_0;\Bbbk)$ and $R=C^*(B;\Bbbk)$, and let $\alpha=h_B^\vee:K^\vee\to R$.
By Proposition~\ref{propqiso}, $\alpha$ is a quasi--isomorphism of $\Bbbk$--dgas. Under the hypotheses of Theorem~\ref{coretheorem2}, write $M_0=\big(K^\vee\otimes H^*(F;\Bbbk),\delta_\psi\big)$ and form $M:=R\otimes_{K^\vee}M_0$. As a graded $R$--module, $M\cong R\otimes H^*(F;\Bbbk)$, and we continue to denote the induced differential by $\delta_\psi$.

\begin{proposition}\label{theorembc}
With the preceding notation:
\begin{itemize}
\item[(i)] The differential graded $R$--module
\[
M=\big(R\otimes H^*(F;\Bbbk),\delta_\psi\big)
\]
is semifree.

\item[(ii)] The canonical $\alpha$--linear morphism
\[
\beta:M_0\longrightarrow M,
\qquad x\longmapsto1\otimes_{K^\vee}x,
\]
is a quasi--isomorphism.

\item[(iii)] There is a unique morphism of differential graded $R$--modules
\[
m:M\longrightarrow C^*(E;\Bbbk)
\]
such that $m\beta=\zeta$, where $C^*(E;\Bbbk)$ is equipped with its standard $R$--module structure induced by $p^\sharp$. Explicitly,
\begin{equation}\label{eqbasem}
m(r\otimes_{K^\vee}x)=p^\sharp(r)\,\zeta(x).
\end{equation}
Moreover, $m$ is a quasi--isomorphism.
\end{itemize}
\end{proposition}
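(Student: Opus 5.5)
Each part follows from results already stated, so the plan is to assemble them in order.

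For (i), the module $M_0$ is semifree over $K^\vee$ by Theorem~\ref{coretheorem1}. Lemma~\ref{lembc}, applied to the dga morphism $\alpha=h_B^\vee\colon K^\vee\to R$, then shows that $M=R\otimes_{K^\vee}M_0$ is semifree over $R$. It remains to check the identification $M\cong R\otimes H^*(F;\Bbbk)$ as graded $R$--modules. The underlying graded $K^\vee$--module of $M_0$ is free on $H^*(F;\Bbbk)$, and $R\otimes_{K^\vee}(K^\vee\otimes V)\cong R\otimes V$. Explicitly, the semifree filtration $F(t)\subset Q(s)$ constructed in Theorem~\ref{theoremsf} base-changes to the filtration $R\otimes(H^s\cap\Gamma^\perp_{t,s})$. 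Its subquotients are free over $R$ on cycles, because the induced differential there is $\delta_B\otimes\mathrm{id}$.

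For (ii), $\alpha$ is a quasi--isomorphism of dgas by Proposition~\ref{propqiso}: the inclusion and $h_B$ are mutually inverse chain homotopy equivalences, and $h_B$ is a dgc map. Hence its dual is a dga quasi--isomorphism. Since $M_0$ is semifree, Lemma~\ref{lemqibc} gives directly that $\beta$ is a quasi--isomorphism.

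For (iii), I would define $m$ by~\eqref{eqbasem} and verify three things.
\begin{itemize}
\item Well-definedness on the balanced tensor product. We need $m(r\alpha(a)\otimes x)=m(r\otimes ax)$ for $a\in K^\vee$. This reduces to $\zeta(ax)=p^\sharp\alpha(a)\zeta(x)$, which is exactly the $K^\vee$--linearity of $\zeta$ in Theorem~\ref{theorembasedmodel}, for the module structure on $C^*(E;\Bbbk)$ obtained by restricting along $h_B^\vee$.
\item $R$--linearity. This follows from associativity of the action $p^\sharp$.
\item Compatibility with differentials. This follows from the Leibniz rule $\delta(p^\sharp(r)\zeta(x))=p^\sharp(\delta_Br)\zeta(x)\pm p^\sharp(r)\zeta(\delta_\psi x)$, together with the fact that $\zeta$ is a chain map.
\end{itemize}
Then $m\beta(x)=m(1\otimes x)=\zeta(x)$. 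Uniqueness holds because $M$ is generated as an $R$--module by the image of $\beta$, so any $R$--linear $m'$ with $m'\beta=\zeta$ must satisfy $m'(r\otimes x)=r\cdot m'(1\otimes x)=p^\sharp(r)\zeta(x)$.

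Finally, $m$ is a quasi--isomorphism by two-out-of-three in homology. Indeed, $H(m)\circ H(\beta)=H(\zeta)$, where $H(\zeta)$ is an isomorphism by Theorem~\ref{theorembasedmodel} and $H(\beta)$ is an isomorphism by (ii).

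The only delicate point I expect is sign bookkeeping in the Leibniz identity under the signed cochain convention of Remark~\ref{rmksignedcochains}. This reduces to the fact that $p^\sharp$ is a dga map, so that $C^*(E;\Bbbk)$ is a dg $R$--module; that in turn is the standard fact transported through the isomorphisms $J_B$ and $J_E$.
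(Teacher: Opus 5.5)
Your proposal is correct and follows the paper's proof essentially step for step: Lemma~\ref{lembc} for (i), Lemma~\ref{lemqibc} with $\alpha=h_B^\vee$ a dga quasi--isomorphism for (ii), and for (iii) the defining formula, the balanced-tensor check via $K^\vee$--linearity of $\zeta$, and two-out-of-three in homology. Your explicit uniqueness argument, that $M$ is generated over $R$ by the image of $\beta$, is just the unpacked form of the paper's appeal to the extension--restriction of scalars adjunction.
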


\begin{proof}
Part~(i) follows from Lemma~\ref{lembc}, since $M_0$ is semifree over $K^\vee$. Part~(ii) follows from Lemma~\ref{lemqibc}, because $\alpha=h_B^\vee$ is a quasi--isomorphism of dgas.

For~(iii), $\zeta$ is $K^\vee$--linear with respect to the $K^\vee$--module structure on $C^*(E;\Bbbk)$ induced by $p^\sharp\alpha$. Hence the extension--restriction of scalars adjunction gives a unique $R$--linear map $m$, necessarily given by Equation~\eqref{eqbasem}. It is well defined because, for $b\in K^\vee$,
\[
\begin{aligned}
m(r\alpha(b)\otimes x)
&=p^\sharp(r\alpha(b))\zeta(x)\\
&=p^\sharp(r)p^\sharp(\alpha(b))\zeta(x)\\
&=p^\sharp(r)\zeta(bx)\\
&=m(r\otimes bx).
\end{aligned}
\]
It is a chain map and $R$--linear by construction. Finally, $m\beta=\zeta$ strictly. Since both $\beta$ and $\zeta$ are quasi--isomorphisms, the two--out--of--three property implies that $m$ is a quasi--isomorphism.
\end{proof}

\subsection{Proofs and Comparison Results}\label{ssecpf12}
\noindent{\it Proof of Theorem~\ref{mthm1}.}\ \ 
Part~(i) is Theorem~\ref{theorembasedmodel}. Under the finite-type hypothesis, Lemma~\ref{lemiden} identifies its source with $C^*(B|b_0;\Bbbk)\otimes H^*(F;\Bbbk)$.

For~(ii), Theorem~\ref{coretheorem1} shows that this based resolution is semifree when the action is degreewise nilpotent. Proposition~\ref{theorembc} then performs base change along $h_B^\vee:C^*(B|b_0;\Bbbk)\to C^*(B;\Bbbk)$ and gives the stated semifree resolution over ordinary cochains. $\qqed$

\begin{corollary}\label{mcor}
Consider a morphism of Serre fibrations
\[
\begin{tikzcd}
F \arrow[r] \arrow[d,"h"] & E \arrow[r,"p"] \arrow[d,"f"] & B \arrow[d,"g"]\\
F' \arrow[r] & E' \arrow[r,"p'"] & B',
\end{tikzcd}
\]
where both fibrations satisfy the hypotheses of Theorem~\ref{mthm1}(ii). Let $m$ and $m'$ be the corresponding semifree cochain models. Then there exists a $g^\sharp$--linear morphism of differential graded modules
\[
\widetilde f:
\big(C^*(B';\Bbbk)\otimes H^*(F';\Bbbk),\delta_{\psi'}\big)
\longrightarrow
\big(C^*(B;\Bbbk)\otimes H^*(F;\Bbbk),\delta_\psi\big)
\]
such that
\[
m\widetilde f\simeq_{C^*(B';\Bbbk)}f^\sharp m'.
\]
The homotopy class of $\widetilde f$ as a $C^*(B';\Bbbk)$--module morphism is unique, where the bottom model is regarded as a $C^*(B';\Bbbk)$--module by restriction of scalars along $g^\sharp$.
\end{corollary}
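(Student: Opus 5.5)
The plan is to obtain $\widetilde f$ from the lifting property of Proposition~\ref{proplift}, applied over the dga $R'=C^*(B';\Bbbk)$. Write
\[
M'=\big(C^*(B';\Bbbk)\otimes H^*(F';\Bbbk),\delta_{\psi'}\big),\qquad M=\big(C^*(B;\Bbbk)\otimes H^*(F;\Bbbk),\delta_\psi\big).
\]
By Theorem~\ref{mthm1}(ii), $M'$ is semifree over $R'$ and $m'\colon M'\to C^*(E';\Bbbk)$ is an $R'$--linear quasi--isomorphism. Regard $M$ and $C^*(E;\Bbbk)$ as $R'$--modules by restriction of scalars along $g^\sharp\colon R'\to C^*(B;\Bbbk)$; restriction of scalars does not change the underlying cochain complexes or maps.

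The first step is to check that the two relevant maps are $R'$--linear.

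\emph{The map $f^\sharp$.} Since $pf\ne$ in general but $p'f=gp$, we have $f^\sharp(p')^\sharp=p^\sharp g^\sharp$. Because $f^\sharp$ is multiplicative for the cup product (in the signed convention of Remark~\ref{rmksignedcochains}, $J$ being natural), this gives
\[
f^\sharp\big((p')^\sharp(r')\,y\big)=p^\sharp(g^\sharp r')\,f^\sharp(y).
\]
Hence $\theta:=f^\sharp m'\colon M'\to C^*(E;\Bbbk)$ is a morphism of $R'$--modules.

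\emph{The map $m$.} By Proposition~\ref{theorembc}(iii), $m$ is $C^*(B;\Bbbk)$--linear and a quasi--isomorphism. It therefore remains an $R'$--linear quasi--isomorphism after restriction along $g^\sharp$.

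Next, apply Proposition~\ref{proplift} with source $M'$ (semifree over $R'$), the quasi--isomorphism $\eta=m$, and $\theta=f^\sharp m'$. This yields an $R'$--module morphism $\widetilde f\colon M'\to M$ with
\[
m\widetilde f\simeq_{R'} f^\sharp m',
\]
and the homotopy class of $\widetilde f$ is unique. Being $R'$--linear into the restricted module is precisely being $g^\sharp$--linear as a map into $M$ over $C^*(B;\Bbbk)$. This gives both the existence and the uniqueness assertions.

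The main point to verify carefully, and the only step that is not formal, is that Proposition~\ref{proplift} applies. Its semifreeness hypothesis is needed only on the source $M'$ over $R'$, which Theorem~\ref{mthm1}(ii) supplies for the primed fibration. No semifreeness of $M$ over $R'$ is required, which matters because that could fail after restriction of scalars. The remaining work is only the sign check that $f^\sharp$ respects the module structures in the paper's signed cochain convention. Note that the hypotheses on the unprimed fibration are used only to guarantee that $m$ exists and is a quasi--isomorphism.
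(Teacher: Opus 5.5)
Your proposal is correct and follows essentially the same route as the paper's proof. You restrict scalars along $g^\sharp$, use $f^\sharp(p')^\sharp=p^\sharp g^\sharp$ to see that $f^\sharp m'$ is $C^*(B';\Bbbk)$--linear, and apply Proposition~\ref{proplift} to the semifree source of $m'$ and the quasi--isomorphism $m$ to get both existence and uniqueness. The garbled clause preceding ``$p'f=gp$'' is a stray typo and should be deleted.
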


\begin{proof}
Set $R'=C^*(B';\Bbbk)$ and $R=C^*(B;\Bbbk)$, and regard every $R$--module below as an $R'$--module by restriction along $g^\sharp$. Since $p'f=gp$,
\[
f^\sharp(p')^\sharp=p^\sharp g^\sharp,
\]
so $f^\sharp:C^*(E';\Bbbk)\to C^*(E;\Bbbk)$ is a strict $R'$--module morphism. Hence $f^\sharp m'$ is an $R'$--module morphism. The source of $m'$ is semifree over $R'$, while $m$ is a quasi--isomorphism of $R'$--modules. Proposition~\ref{proplift} gives $\widetilde f$ with
\[
m\widetilde f\simeq_{R'}f^\sharp m'.
\]
Its $R'$--module homotopy class is unique by the same proposition.
\end{proof}

\begin{remark}\label{rmkmodelnaturality}
The morphism $\widetilde f$ in Corollary~\ref{mcor} is obtained from the comparison property of semifree resolutions. The corollary therefore does not assert that the choices entering the twisting cochains or the replacement constructions are strictly functorial. Rather, a morphism of fibrations determines a comparison morphism between the resulting semifree models, unique up to module homotopy.
\end{remark}

\noindent{\it Proof of Theorem~\ref{mthm2}.}\ \ 
Set $R=C^*(B;\Bbbk)$. By Theorem~\ref{mthm1},
\[
m:\big(R\otimes H^*(F;\Bbbk),\delta_\psi\big)\longrightarrow C^*(E;\Bbbk)
\]
is a semifree $R$--module resolution, while $m_W:(R\otimes W,d)\to C^*(E;\Bbbk)$ is another one. Proposition~\ref{proplift} gives an $R$--module morphism
\[
n:(R\otimes W,d)\longrightarrow\big(R\otimes H^*(F;\Bbbk),\delta_\psi\big)
\]
such that $mn\simeq_Rm_W$. Applying the uniqueness statement of Proposition~\ref{proplift} in both directions shows that $n$ is a homotopy equivalence of differential graded $R$--modules and hence a quasi--isomorphism.

Regard $\Bbbk$ as a right differential graded $R$--module through the augmentation $\varepsilon_B$. Since $n$ is an $R$--module homotopy equivalence, choose an $R$--linear homotopy inverse $n'$ and $R$--linear homotopies between $n'n$, $nn'$ and the corresponding identity maps. Tensoring these maps and homotopies with $\Bbbk$ over $R$ shows directly that
\[
\bar n:=\Bbbk\otimes_Rn:
\Bbbk\otimes_R(R\otimes W)
\longrightarrow
\Bbbk\otimes_R\big(R\otimes H^*(F;\Bbbk),\delta_\psi\big)
\]
is a chain homotopy equivalence. The first complex is $(W,\bar d)$. The second is canonically $(H^*(F;\Bbbk),0)$: every twisting term has positive $R$--degree and is annihilated by the augmentation. Under these identifications, $\bar n$ is the morphism in the statement. $\qqed$

\section{Connections to Spectral Sequences}\label{secss}
Throughout this section, the ground ring is a field $\Bbbk$ of arbitrary characteristic unless otherwise stated. We first recall the spectral sequence associated with a semifree filtration.

\subsection{\texorpdfstring{Spectral Sequence of a Semifree $(R,d)$--Module}{Spectral Sequence of a Semifree Module}}
We follow the convention of Chapter~18 of \cite{FHT}. For a differential $(R,d)$--module $(M,d)$ with cohomological differential, set $M_{-k}=M^k$ in order to use the homological indexing associated with an increasing filtration.

Let $(M,d)$ be equipped with an increasing filtration
\[
\mathcal F:\cdots\subset F_p\subset F_{p+1}\subset\cdots
\]
by differential graded $R$--submodules. For $r\geq0$, define
\[
Z^r_p:=\{x\in F_p\mid dx\in F_{p-r}\},
\qquad
D^r_p:=F_p\cap dF_{p+r}.
\]
Define
\[
E^0_{p,q}:=(F_p/F_{p-1})_{p+q}
\]
and, for $r\geq1$,
\[
E^r_{p,q}:=\big(Z^r_p/(Z^{r-1}_{p-1}+D^{r-1}_p)\big)_{p+q}.
\]
The differential induces
\[
d^r:E^r_{p,q}\longrightarrow E^r_{p-r,q+r-1},
\]
and
\[
E^{r+1}\cong H_*(E^r,d^r).
\]
\begin{definition}
The spectral sequence $(E^r,d^r)$ is called the {\it homology spectral sequence} of the increasing filtration $\mathcal F$.
\end{definition}

We now specialize to a Serre fibration satisfying the hypotheses of Theorem~\ref{mthm1}(ii). Set $K=C_*(B|b_0;\Bbbk)$ and $R=C^*(B;\Bbbk)$, so that $K^\vee=C^*(B|b_0;\Bbbk)$.
Recall that the based model
\[
M_0=\big(K^\vee\otimes H^*(F;\Bbbk),\delta_\psi\big)
\]
is equipped, by the proof of Theorem~\ref{theoremsf}, with a semifree filtration
\[
0=M_0(-1)\subset M_0(0)\subset M_0(1)\subset\cdots.
\]
The semifree model of Theorem~\ref{mthm1}(ii) is obtained from $M_0$ by extension of scalars along
\[
h_B^\vee:K^\vee\longrightarrow R:
\]
\[
M=R\otimes_{K^\vee}M_0\cong\big(R\otimes H^*(F;\Bbbk),\delta_\psi\big).
\]
Accordingly, $M(k):=R\otimes_{K^\vee}M_0(k)$ is a semifree filtration of $M$.

\noindent{\it Proof of Theorem~\ref{mthm3}.}\ \ 
Set
\[
N=\mathrm{nil}\big(\pi_1(B),H_*(F;\Bbbk)\big).
\]
Define $\Lambda_0=H_*(F;\Bbbk)$ and, inductively,
\[
\Lambda_i=\operatorname{span}_{\Bbbk}\{gx-x\mid g\in\pi_1(B),\ x\in\Lambda_{i-1}\},
\qquad1\leq i\leq N.
\]
Then
\[
H_*(F;\Bbbk)=\Lambda_0\supset\Lambda_1\supset\cdots\supset\Lambda_N=0.
\]
Via $H^*(F;\Bbbk)\cong H_*(F;\Bbbk)^\vee$, set
\[
\Gamma_i:=\Lambda_i^\perp.
\]
Thus
\[
0=\Gamma_0\subset\Gamma_1\subset\cdots\subset\Gamma_N=H^*(F;\Bbbk).
\]
For each $j\geq0$, set
\[
\Gamma_{i,j}=\Gamma_i\cap H^j(F;\Bbbk).
\]
By Lemma~\ref{lemnil}, Equation~\eqref{eqpsi1top}, and the proof of Theorem~\ref{theoremsf},
\[
\delta_\psi(1\otimes\Gamma_{i,j})
\subset
K^\vee\otimes\big(H^{<j}(F;\Bbbk)\oplus\Gamma_{i-1,j}\big).
\]
The terms preserving the fibre degree lower the nilpotence filtration, whereas the remaining terms lower the fibre degree, as in Lemma~\ref{difflem}.

For $1\leq i\leq N$ and $0\leq j\leq D-1$, define
\[
\widetilde V(i,j):=H^{<j}(F;\Bbbk)\oplus\Gamma_{i,j}.
\]
The sum is direct because $\Gamma_{i,j}\subset H^j(F;\Bbbk)$. These subspaces form an increasing filtration
\[
\begin{aligned}
0&\subset\widetilde V(1,0)\subset\cdots\subset\widetilde V(N,0)\\
&\subset\widetilde V(1,1)\subset\cdots\subset\widetilde V(N,1)\\
&\subset\cdots\subset\widetilde V(N,D-1)=H^*(F;\Bbbk).
\end{aligned}
\]
After reindexing it as
\[
0=V(-1)\subset V(0)\subset\cdots\subset V(ND-1),
\]
the modules $K^\vee\otimes V(k)$ form a semifree filtration of $M_0$. Base change along $h_B^\vee$ gives
\[
0=R\otimes V(-1)\subset\cdots\subset R\otimes V(ND-1)=R\otimes H^*(F;\Bbbk),
\]
a semifree filtration of the model in Theorem~\ref{mthm1}(ii).

Its nonzero filtration degrees lie in $0\leq p\leq ND-1$. Since $d^r$ lowers filtration degree by $r$,
\[
d^r=0\qquad(r\geq ND).
\]
Thus $E^{ND}=E^\infty$, and the spectral sequence collapses at the $E^{ND}$--page. $\qqed$

\subsection{Relation with the Eilenberg--Moore Spectral Sequence}
The degreewise nilpotence hypothesis in Theorem~\ref{mthm1}(ii) is classical in Eilenberg--Moore theory. Dwyer proved that, for a Serre fibration over a connected base and an arbitrary abelian coefficient group $A$, the mod--$A$ Eilenberg--Moore spectral sequence converges strongly to the homology of the fibre if and only if the $\pi_1(B)$--action on $H_*(F;A)$ degreewise nilpotent; see~\cite{Dwyer}. Under our hypotheses, the semifree model gives a direct description of fibre cohomology as differential graded Tor.

For comparison with the standard algebraic construction, let $(C,d)$ be an $(R,d)$--module with a semifree resolution
\[
m:(M,d)=(R\otimes W,d)\longrightarrow(C,d),
\]
where
\[
W=\bigoplus_{k\geq0}W(k),
\qquad
M(k)=R\otimes\bigoplus_{i\leq k}W(i),
\qquad
dW(k)\subset M(k-1).
\]
The associated filtration has
\[
E^0_{k,*}=R\otimes W(k),
\qquad
E^1_{k,*}=H^*(R)\otimes W(k),
\]
and, together with $m(0)$, gives the sequence
\[
\cdots\xrightarrow{d^1}H^*(R)\otimes W(1)
\xrightarrow{d^1}H^*(R)\otimes W(0)
\xrightarrow{H^*(m(0))}H^*(C).
\]
Following Chapter~20 of~\cite{FHT}, such a resolution is called an \emph{Eilenberg--Moore resolution} when this sequence is exact, equivalently when $(H^*(R)\otimes W,d^1)$ is a free resolution of the $H^*(R)$--module $H^*(C)$.

\begin{corollary}\label{cortorfiber}
Let $p:E\to B$ satisfy the hypotheses of Theorem~\ref{mthm1}(ii), and set
\[
R=C^*(B;\Bbbk),
\]
and regard $\Bbbk$ as a right differential graded $R$--module through the augmentation determined by $b_0$. Then
\[
\operatorname{Tor}^{R}\bigl(\Bbbk,C^*(E;\Bbbk)\bigr)
\cong
H^*(F;\Bbbk)
\]
as graded $\Bbbk$--vector spaces, where differential graded Tor is computed using a semifree $R$--module resolution of $C^*(E;\Bbbk)$.
\end{corollary}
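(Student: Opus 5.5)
The plan is to compute the differential graded Tor with the particular semifree resolution supplied by Theorem~\ref{mthm1}(ii), and then to argue that the answer does not depend on which semifree resolution is chosen. By definition, $\operatorname{Tor}^{R}\bigl(\Bbbk,C^*(E;\Bbbk)\bigr)=H\bigl(\Bbbk\otimes_R M\bigr)$ for a semifree resolution $M\xrightarrow{\simeq}C^*(E;\Bbbk)$. Under the stated hypotheses, Theorem~\ref{mthm1}(ii) together with Proposition~\ref{theorembc} gives the semifree resolution
\[
m\colon\big(R\otimes H^*(F;\Bbbk),\delta_\psi\big)\xrightarrow{\simeq}C^*(E;\Bbbk).
\]
We will use this resolution to carry out the computation.

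\emph{Step 1 (the specific model).} Form $\Bbbk\otimes_R\big(R\otimes H^*(F;\Bbbk),\delta_\psi\big)\cong\big(H^*(F;\Bbbk),\bar\delta_\psi\big)$. We claim that $\bar\delta_\psi=0$. By Lemma~\ref{lemdelta}, which survives the base change along $h_B^\vee$ with the $K^\vee$-coefficients replaced by their images in $R$, we have
\[
\delta_\psi(1\otimes v)=\sum_i\pm\,\psi^\vee(\alpha_i)\otimes v_i .
\]
Every coefficient $\psi^\vee(\alpha_i)$ lies in $R^{\ge 1}$. Since $\varepsilon_B$ vanishes in positive degrees, each twisting term is killed after tensoring with $\Bbbk$. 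The untwisted part $\delta_B\otimes\mathrm{id}$ also becomes zero, because the reduced complex is concentrated in $\Bbbk\otimes H^*(F;\Bbbk)$. This is exactly the observation already made at the end of the proof of Theorem~\ref{mthm2}. Hence the homology of the reduced complex is $H^*(F;\Bbbk)$.

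\emph{Step 2 (independence of the resolution).} Let $m_W\colon M'\to C^*(E;\Bbbk)$ be any other semifree resolution. Proposition~\ref{proplift} produces an $R$-linear homotopy equivalence $n\colon M'\to M$ compatible with the two resolution maps. Tensoring $n$, a homotopy inverse $n'$, and the $R$-linear homotopies with $\Bbbk$ over $R$ gives a chain homotopy equivalence $\Bbbk\otimes_R M'\simeq\Bbbk\otimes_R M$. Combining this with Step 1 yields $\operatorname{Tor}^{R}\bigl(\Bbbk,C^*(E;\Bbbk)\bigr)\cong H^*(F;\Bbbk)$. When $M'$ has the form $R\otimes W$, this is precisely the map $\bar n$ of Theorem~\ref{mthm2}, so one may simply invoke that theorem. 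For a general semifree $M'$, the argument is the same, with $W$ absent from the statement.

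The main point to check is Step 1, specifically that no twisting term survives the augmentation. This requires that $\psi$ vanish on $K_0$, so that all coefficients have positive degree. It also requires that the identification $R\otimes_{K^\vee}(K^\vee\otimes H^*)\cong R\otimes H^*$ preserve this degree property, which holds because $h_B^\vee$ preserves degree.
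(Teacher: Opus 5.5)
Your proposal is correct and is essentially the paper's argument. The paper simply cites Theorem~\ref{mthm2}, whose proof consists of your Step~2 (lifting via Proposition~\ref{proplift}, then tensoring the homotopy equivalence and its homotopies with $\Bbbk$ over $R$) and your Step~1 (every twisting term has positive $R$--degree and is killed by $\varepsilon_B$). One wording in Step~1 should be tightened: the untwisted part dies because $\delta_B(1)=0$, or more generally because $\varepsilon_B\delta_B=0$, not because of where the reduced complex is concentrated.
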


\begin{proof}
Let
\[
m_W:(R\otimes W,d)\longrightarrow C^*(E;\Bbbk)
\]
be any semifree $R$--module resolution. By Theorem~\ref{mthm2}, reduction along the augmentation gives a quasi--isomorphism
\[
\Bbbk\otimes_R(R\otimes W,d)
\xrightarrow{\simeq}
\bigl(H^*(F;\Bbbk),0\bigr).
\]
The cohomology of the complex on the left computes
$\operatorname{Tor}^{R}(\Bbbk,C^*(E;\Bbbk))$, which proves the assertion.
\end{proof}

If $m_W$ is an Eilenberg--Moore resolution, the reduced filtration on
$\Bbbk\otimes_R(R\otimes W,d)$ has the familiar algebraic second page
\[
E^2_{k,*}
\cong
\operatorname{Tor}^{H^*(B;\Bbbk)}_k
\bigl(\Bbbk,H^*(E;\Bbbk)\bigr).
\]
Corollary~\ref{cortorfiber} identifies the differential graded Tor target with the cohomology of the fibre. We do not use this observation as an independent convergence theorem. Dwyer's theorem supplies the classical strong-convergence criterion in the homological setting. Our construction relates the degreewise nilpotence of the action to the semifreeness of the dualized Brown model.

\section{Appendix}\label{appendix}
In this appendix, we give the geometric details underlying Proposition~\ref{propqiso} and Lemma~\ref{lemfibcompatible}. To make the construction easier to verify, we separate it into four steps: the collapsing maps $CL^n$, the induced vertex--relative dgc morphism, the face-compatible homotopy, and the compatibility with the projection of a Serre fibration.

\begin{lemma}\label{lemappcl}
For every $n\geq0$ there is a continuous map
\[
CL^n:\Delta^n\longrightarrow J^n,
\]
where $J^n$ is obtained from $\Delta^n$ by attaching one copy of $[0,1]$ at each vertex. The maps $CL^n$ send the $k$-th vertex to the outer endpoint of the attached interval, collapse the boundary of the $1/3$--corner at that vertex to the attaching point, and are compatible with all face inclusions.
\end{lemma}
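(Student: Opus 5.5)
The plan is to build $CL^n$ one simplex at a time. First I define it on the closed $1/3$--corners. Then I extend it to the rest of $\Delta^n$ by a formula that is natural in the barycentric coordinates, so that compatibility with faces holds automatically. Write $v_0,\dots,v_n$ for the vertices of $\Delta^n$ and $t=(t_0,\dots,t_n)$ for barycentric coordinates. The $1/3$--corner at $v_k$ is
\[
C_k=\{t\in\Delta^n\mid t_k\geq 2/3\}.
\]
Since $2/3+2/3>1$, these corners are pairwise disjoint. Each $C_k$ is a cone with apex $v_k$ over its inner boundary $\partial_{\mathrm{in}}C_k=\{t_k=2/3\}$. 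Let $I_k=[0,1]$ be the interval attached at $v_k$, with $0$ glued to $v_k$ and $1$ the outer endpoint.

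On $C_k$, set $CL^n(t)=3t_k-2\in I_k$. This sends $v_k$ to the outer endpoint $1$, sends the whole inner boundary $\{t_k=2/3\}$ to $0$, which is the attaching point $v_k$, and is continuous on $C_k$. On the complement
\[
P=\{t\mid t_k\leq 2/3 \text{ for all } k\},
\]
I need a map $P\to\Delta^n$ that sends each face $\{t_k=2/3\}\cap P$ to $v_k$. A concrete choice is the radial rescaling
\[
r(t)=\frac{\bigl(\max(0,\,3t_i-1)\bigr)_i}{\sum_j \max(0,\,3t_j-1)}.
\]
On $t_k=2/3$ the other coordinates sum to $1/3$, so each satisfies $3t_i-1\leq 0$ for $i\neq k$. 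Hence $r(t)=v_k$ there, which agrees with the corner formula on the overlap. The main obstacle is the denominator, which vanishes when every $t_i\leq 1/3$; this region contains the barycenter for $n\geq2$. To fix this I would use a genuine retraction instead of the formula above. Take a continuous map $P\to\Delta^n$ that is the identity near the center and pinches each inner corner face to its vertex. One way is to compose the linear homeomorphism from $\{t_k\leq 2/3\}$-shape onto a truncated simplex with the standard quotient map that collapses each truncation facet to its vertex. That quotient is continuous, because each facet is compact and is collapsed to a point. The key requirement is that the map is defined only in terms of the coordinates $t_i$ through a symmetric, coordinatewise rule.

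Face compatibility then follows from the form of the definition. Let $d^i\colon\Delta^{n-1}\to\Delta^n$ be the $i$-th face inclusion and $J(d^i)\colon J^{n-1}\to J^n$ the induced inclusion, which carries attached intervals to attached intervals via the vertex relabeling. Points of the face have $t_i=0$. The corner condition $t_k\geq 2/3$, the value $3t_k-2$, and the pinching rule on $P$ all involve only the surviving coordinates, because the rule treats coordinates uniformly and a zero coordinate contributes nothing. This gives $CL^n\circ d^i=J(d^i)\circ CL^{n-1}$.

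The final check is continuity across $\partial_{\mathrm{in}}C_k$, where both pieces give $v_k$, and then gluing via the pasting lemma on the finite closed cover $\{C_0,\dots,C_n,P\}$.
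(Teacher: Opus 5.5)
Your treatment of the corners and of the final gluing is fine, but the central region $P$ is never actually handled for $n\geq2$, and that is the real content of the lemma. You correctly notice that $r(t)$ breaks down at the barycenter, but your replacement is not a construction. $P$ already is the truncated simplex, and there is no ``standard'' map from it to $\Delta^n$ collapsing each truncation facet $\{t_k=2/3\}\cap P$ to $v_k$. The quotient of $P$ by these collapses is abstractly a simplex, but producing an actual map $P\to\Delta^n$ that realizes the collapse \emph{compatibly with all face inclusions} is exactly what has to be proved.

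Your face-compatibility paragraph appeals to a ``symmetric, coordinatewise rule'' in which a zero coordinate contributes nothing, but no such rule can work. Suppose the map on $P$ has the form $t\mapsto\bigl(f(t_i)\bigr)_i/\sum_jf(t_j)$. For $n\geq2$, the coordinates $t_i$ with $i\neq k$ take every value in $[0,1/3]$ on the facet $\{t_k=2/3\}\cap P$. So sending that facet to $v_k$ forces $f\equiv0$ on $[0,1/3]$, and then the denominator vanishes at the barycenter. The property you rely on for face compatibility is therefore incompatible with the collapsing requirement; this is the same obstruction you already hit with $r(t)$.

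The missing idea is that the weight of $v_k$ must be killed when some \emph{other} coordinate reaches $2/3$, so the weights have to couple the coordinates. The paper uses $w_k(t)=t_k\prod_{j\neq k}(2/3-t_j)$ and $\lambda_k=w_k/\sum_rw_r$ on $P$, and checks three things:
\begin{itemize}
\item the denominator is positive: if some $t_j=2/3$ then $w_j>0$, and otherwise any $k$ with $t_k>0$ gives $w_k>0$;
\item on $\{t_k=2/3\}$, every $w_i$ with $i\neq k$ contains the factor $2/3-t_k=0$, so the image is $v_k$;
\item on a face $t_r=0$, one has $w_r=0$ while all remaining weights acquire the common factor $2/3$, which cancels after normalization.
\end{itemize}
Alternatively, you could argue by induction on $n$. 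Define the map on the boundary of $P$ by $J(d^r)\circ CL^{n-1}\circ(d^r)^{-1}$ on the pieces $\{t_r=0\}\cap P$, and by the constant $v_k$ on the facets $\{t_k=2/3\}\cap P$. These agree on overlaps, and the latter facets are pairwise disjoint. Then extend over $P$ by coning from an interior point into the convex set $\Delta^n$; face compatibility holds by construction. Either route closes the gap, but as written your proposal does not define $CL^n$ on $P$.
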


\begin{proof}
For $0\leq k\leq n$, let
\[
\Delta_k^n
:=
\left\{\sum_{i=0}^n a_i e_i\in\Delta^n\ \middle|\ a_k\geq 2/3\right\}
\]
be the $1/3$--corner at $e_k$, and let
\[
F_k:=\left\{\sum_{i=0}^n a_i e_i\in\Delta^n\ \middle|\ a_k=2/3\right\}
\]
be its face opposite $e_k$. The complement of the interiors of the corners is
\[
P_n:=\left\{\sum_{i=0}^n a_i e_i\in\Delta^n\ \middle|\ 0\leq a_i\leq2/3\text{ for all }i\right\}.
\]
Let
\[
f_n:\coprod_{k=0}^n\{0\}_k\longrightarrow\Delta^n,
\qquad 0_k\longmapsto e_k,
\]
and set
\[
J^n:=\Delta^n\cup_{f_n}\Big(\coprod_{k=0}^n[0,1]_k\Big).
\]

On $\Delta_k^n$, define
\[
CL^n\Big(\sum_{i=0}^n a_i e_i\Big)=(3a_k-2)_k\in[0,1]_k.
\]
Thus $CL^n(e_k)=1_k$ and $CL^n(F_k)=\{0_k\}=\{[e_k]\}$.

On the central region $P_n$, for $x=\sum_i a_i e_i$, set
\[
w_k(x):=a_k\prod_{j\neq k}(2/3-a_j),
\qquad
W(x):=\sum_{r=0}^n w_r(x),
\qquad
\lambda_k(x):=\frac{w_k(x)}{W(x)},
\]
and define
\[
CL^n(x):=\sum_{k=0}^n\lambda_k(x)[e_k]
\in\langle[e_0],\ldots,[e_n]\rangle\subset J^n.
\]
The denominator $W(x)$ is strictly positive on $P_n$: if some $a_j=2/3$, then $w_j(x)>0$, while if all $a_j<2/3$, at least one $a_k$ is positive and then $w_k(x)>0$. Hence the $\lambda_k$ are well defined and continuous. On $F_k$ one has $\lambda_k=1$ and $\lambda_j=0$ for $j\neq k$, so the two definitions of $CL^n$ agree. Therefore $CL^n$ is continuous.

It remains to verify compatibility with faces. Let
\[
\delta^r:\Delta^{n-1}\longrightarrow\Delta^n
\]
be the standard inclusion of the $r$-th face, and let
\[
J(\delta^r):J^{n-1}\longrightarrow J^n
\]
be the induced inclusion, which sends the interval at the $i$-th vertex to the interval at the corresponding vertex of the $r$-th face. On every corner the equality
\[
CL^n\circ\delta^r=J(\delta^r)\circ CL^{n-1}
\]
is immediate from the formula $(3a_k-2)_k$. On the central region, $a_r=0$, so $w_r=0$, while every remaining weight acquires the same additional factor $2/3$. After normalization this common factor cancels. Thus the same equality holds on $P_{n-1}$, proving face compatibility.
\end{proof}

\begin{lemma}\label{lemapphz}
Let $Y$ be a topological space and let $Z\subset Y$ meet every path component. For every $y\in Y$, choose a point $r(y)\in Z$ in the same path component and a path
\[
\gamma_y:[0,1]\longrightarrow Y,
\qquad
\gamma_y(0)=y,
\qquad
\gamma_y(1)=r(y),
\]
with $r(y)=y$ and $\gamma_y$ constant when $y\in Z$. Then the maps $CL^n$ of Lemma~\ref{lemappcl} induce a morphism of $\mathbb Z$--dgcs
\[
h_Z:C_*(Y)\longrightarrow C_*(Y|Z).
\]
\end{lemma}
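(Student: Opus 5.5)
We define $h_Z$ on a singular simplex $\sigma:\Delta^n\to Y$ by first extending $\sigma$ over $J^n$ and then precomposing with $CL^n$. Precisely, let
\[
\widehat\sigma:J^n\longrightarrow Y
\]
be $\sigma$ on $\Delta^n$ and $t\mapsto\gamma_{\sigma(e_k)}(t)$ on the $k$-th attached interval $[0,1]_k$. This is well defined and continuous because the interval is attached at $0_k\sim e_k$ and $\gamma_{\sigma(e_k)}(0)=\sigma(e_k)$. Then set
\[
h_Z(\sigma):=\widehat\sigma\circ CL^n:\Delta^n\longrightarrow Y,
\]
extended linearly. Since $CL^n(e_k)=1_k$, the $k$-th vertex goes to $\gamma_{\sigma(e_k)}(1)=r(\sigma(e_k))\in Z$, so $h_Z(\sigma)\in C_n(Y|Z)$.

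\textbf{Chain map.} We check that $h_Z$ commutes with all face operators. Face compatibility of $CL^n$ (Lemma~\ref{lemappcl}) gives
\[
CL^n\circ\delta^r=J(\delta^r)\circ CL^{n-1}.
\]
It remains to see that $\widehat\sigma\circ J(\delta^r)=\widehat{\sigma\circ\delta^r}$. This holds because $J(\delta^r)$ sends the interval at the $i$-th vertex of $\Delta^{n-1}$ to the interval at the corresponding vertex $\delta^r(e_i)$, and the attached paths depend only on the image point of that vertex. Hence
\[
h_Z(\sigma\circ\delta^r)=h_Z(\sigma)\circ\delta^r,
\]
so $h_Z$ commutes with $\partial$. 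The same identity applied to degeneracies is not needed.

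\textbf{Compatibility with Alexander--Whitney.} This is the main point. We must show
\[
h_Z(\sigma)\circ\iota_{[0..i]}=h_Z(\sigma\circ\iota_{[0..i]})
\quad\text{and}\quad
h_Z(\sigma)\circ\iota_{[i..n]}=h_Z(\sigma\circ\iota_{[i..n]}),
\]
where $\iota$ denotes the front and back face inclusions. These are iterated compositions of the face maps $\delta^r$, so the identity just proved already yields
\[
AW\circ h_Z=(h_Z\otimes h_Z)\circ AW.
\]
The key observation is that Lemma~\ref{lemappcl} asserts compatibility with \emph{all} face inclusions, not only the codimension-one ones used in $\partial$.

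\textbf{Counit.} In degree $0$, $CL^0$ sends the point to $1_0$, so $h_Z(y)=r(y)$, and $\varepsilon(h_Z(y))=1=\varepsilon(y)$. Thus $h_Z$ is a morphism of $\mathbb Z$-dgcs.

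I expect the only real obstacle to be the continuity and well-definedness of $\widehat\sigma$, and the precise bookkeeping that $J(\delta^r)$ matches the intervals. Both reduce to the hypothesis that $\gamma_y$ depends only on the point $y$, and not on the simplex or the vertex index. Finally, since $\gamma_y$ is constant and $r(y)=y$ for $y\in Z$, $h_Z$ restricts to $C_*(Y|Z)$; this is needed later but not for the present statement.
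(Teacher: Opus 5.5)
Your proposal is correct and follows essentially the same argument as the paper. You extend $\sigma$ over $J^n$ by the chosen paths $\gamma_{\sigma(e_k)}$, set $h_Z(\sigma)=\widehat\sigma\circ CL^n$, and obtain both the chain-map property and Alexander--Whitney compatibility from face compatibility of $CL^n$ together with $\widehat\sigma\circ J(\delta_\alpha)=\widehat{\sigma\circ\delta_\alpha}$; the paper does the same, and your check of the counit merely spells out what the paper leaves implicit.
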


\begin{proof}
For a singular $n$--simplex $\sigma:\Delta^n\to Y$, define
\[
\Phi^Z_\sigma:J^n\longrightarrow Y
\]
by $\Phi^Z_\sigma|_{\Delta^n}=\sigma$ and
\[
\Phi^Z_\sigma|_{[0,1]_k}=\gamma_{\sigma(e_k)}.
\]
The definitions agree at the attaching points. Set
\[
h_Z(\sigma):=\Phi^Z_\sigma\circ CL^n.
\]
Since $CL^n(e_k)=1_k$, all vertices of $h_Z(\sigma)$ lie in $Z$, so $h_Z(\sigma)\in C_n(Y|Z)$.

Let $\alpha:[m]\hookrightarrow[n]$ be an order-preserving injection and let $\delta_\alpha:\Delta^m\to\Delta^n$ be the corresponding face inclusion. Lemma~\ref{lemappcl} and the definition of $\Phi^Z_\sigma$ give
\[
CL^n\circ\delta_\alpha=J(\delta_\alpha)\circ CL^m,
\qquad
\Phi^Z_\sigma\circ J(\delta_\alpha)=\Phi^Z_{\sigma\circ\delta_\alpha}.
\]
Consequently,
\[
h_Z(\sigma)\circ\delta_\alpha=h_Z(\sigma\circ\delta_\alpha).
\]
Taking codimension-one faces gives $\partial h_Z=h_Z\partial$. Applying the same identity to the front and back faces in the Alexander--Whitney formula gives
\[
AW\,h_Z=(h_Z\otimes h_Z)AW.
\]
The counit is preserved as well. Hence $h_Z$ is a morphism of $\mathbb Z$--dgcs.
\end{proof}

\begin{lemma}\label{lemapphomotopy}
With the choices of Lemma~\ref{lemapphz}, there is a face-compatible homotopy
\[
\mathcal K^n:\Delta^n\times[0,1]\longrightarrow J^n
\]
from the standard inclusion of $\Delta^n$ into the simplex part of $J^n$ to $CL^n$. The induced maps
\[
H^Z_s(\sigma):=\Phi^Z_\sigma\circ\mathcal K^n_s
\qquad(0\leq s\leq1)
\]
are morphisms of $\mathbb Z$--dgcs and satisfy
\[
H^Z_0=\mathrm{id}_{C_*(Y)},
\qquad
H^Z_1=i_Zh_Z.
\]
Moreover, if $\sigma\in C_n(Y|Z)$, then $H^Z_s(\sigma)\in C_n(Y|Z)$ for every $s$.
\end{lemma}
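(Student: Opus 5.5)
We construct $\mathcal K^n$ explicitly on $\Delta^n\times[0,1]$, in the same two-region form as $CL^n$ in Lemma~\ref{lemappcl}. The aim is that $\mathcal K^n_s$ is face-compatible for every $s$ and sends each vertex $e_k$ into the interval $[0,1]_k$. The natural choice is a straight-line homotopy from the identity to $CL^n$, with an interval excursion on the corners. Fix $x=\sum_i a_ie_i$.

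On the corner $\Delta^n_k$, define $\mathcal K^n_s(x)$ in two stages. For $a_k\le 2/3+\frac{s}{3}\cdot(\ldots)$, rescale the corner radially toward $e_k$, landing in the simplex part. On the remaining part near $e_k$, push out along $[0,1]_k$ up to the level $s(3a_k-2)$. Equivalently, reparametrize the coordinate $a_k\in[2/3,1]$ so that at time $s$ the subinterval $[1-\tfrac{s}{3},1]$ maps onto $[0,s]_k$ and the rest is compressed affinely into the corner $\{a_k\ge 1-\tfrac{s}{3}\}$ of $\Delta^n$.

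On the central region $P_n$, take the convex combination
\[
(1-s)\,x+s\sum_k\lambda_k(x)e_k
\]
in $\Delta^n$, suitably adjusted so that it matches the corner formula on each $F_k$. This adjustment is the main obstacle. I would first try a single formula valid on all of $\Delta^n$. Let $\tau_s\colon\Delta^n\to\Delta^n$ be the face-compatible homotopy obtained by replacing the weights $w_k$ with $w_k^{(s)}=a_k\prod_{j\ne k}\big(1-\tfrac{s}{3}-a_j\big)_+$. Then compose with the interval excursion on the region $\{a_k\ge 1-\tfrac{s}{3}\}$. At $s=0$ this reduces to the identity, and at $s=1$ it recovers $CL^n$. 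Face compatibility holds for the same reason as in Lemma~\ref{lemappcl}: setting $a_r=0$ kills $w_r^{(s)}$, and every remaining weight acquires the common factor $(1-\tfrac{s}{3})$, which cancels after normalization. Continuity and positivity of the denominator are checked exactly as there.

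Given $\mathcal K^n$, set $H^Z_s(\sigma)=\Phi^Z_\sigma\circ\mathcal K^n_s$. From $\mathcal K^n_s\circ\delta_\alpha=J(\delta_\alpha)\circ\mathcal K^m_s$ and $\Phi^Z_\sigma\circ J(\delta_\alpha)=\Phi^Z_{\sigma\delta_\alpha}$ we obtain $H^Z_s(\sigma)\circ\delta_\alpha=H^Z_s(\sigma\circ\delta_\alpha)$. Following the proof of Lemma~\ref{lemapphz}, this gives commutation with $\partial$ and with $AW$, since the front and back faces are face inclusions, and it also gives preservation of the counit. The endpoint identities $H^Z_0=\mathrm{id}$ and $H^Z_1=i_Zh_Z$ follow from $\mathcal K^n_0=\mathrm{incl}$ and $\mathcal K^n_1=CL^n$.

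For the last claim, let $\sigma\in C_n(Y|Z)$. Every vertex $\sigma(e_k)$ lies in $Z$, so $\gamma_{\sigma(e_k)}$ is constant and $\Phi^Z_\sigma$ sends each interval $[0,1]_k$ to $\sigma(e_k)\in Z$. Because $\mathcal K^n_s(e_k)\in[0,1]_k$, every vertex of $H^Z_s(\sigma)$ lies in $Z$.

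The induced ordinary chain homotopy is the usual prism operator $\sum_i(-1)^i(\Phi^Z_\sigma\circ\mathcal K^n)\circ P_i$. It is natural by face compatibility and it restricts to $C_*(Y|Z)$, so it yields the chain-homotopy statements of Proposition~\ref{propqiso}.
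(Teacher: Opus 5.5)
Your formal deductions are fine and match the paper: dgc morphisms from face compatibility, the endpoint identities given $\mathcal K^n_0=\mathrm{incl}$ and $\mathcal K^n_1=CL^n$, and the vertex argument for $C_*(Y|Z)$. The gap is in the one substantive step, the construction of $\mathcal K^n$, which does not work as written.

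Your first description leaves the corner formula unspecified (the ``$(\ldots)$'') and leaves the matching along $F_k$ unresolved. The single formula you then commit to does not start at the identity. At $s=0$ your weights are $w_k^{(0)}=a_k\prod_{j\neq k}(1-a_j)$, and normalizing them does not return the barycentric coordinates. For $n=1$ one gets $w_0^{(0)}=a_0(1-a_1)=a_0^2$ and $w_1^{(0)}=a_1^2$, so $\tau_0(a_0e_0+a_1e_1)$ has first coordinate $a_0^2/(a_0^2+a_1^2)\neq a_0$. Granting the remaining details, your family is therefore a face-compatible homotopy from some map $\tau_0\neq\mathrm{id}$ to $CL^n$. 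Then $H^Z_0(\sigma)=\sigma\circ\tau_0\neq\sigma$, so the required identity $H^Z_0=\mathrm{id}_{C_*(Y)}$ fails.

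The missing idea is to separate the two stages in time rather than in space; this removes your ``adjustment'' obstacle entirely. The paper sets $g^n=r^n\circ CL^n$, where $r^n$ collapses each attached interval to its attaching point. This $g^n$ is a globally defined, continuous, face-compatible self-map of $\Delta^n$: it equals $CL^n$ on $P_n$ and is constant at $e_k$ on the $k$-th corner. So the straight-line homotopy $(1-s)x+s\,g^n(x)$ is face-compatible because face inclusions are affine, and no matching along $F_k$ is needed. The paper then runs a second homotopy, constant on $P_n$ and given by $(s(3a_k-2))_k$ on the $k$-th corner. The two pieces of this second homotopy agree along $F_k$ because $3a_k-2=0$ there.

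Your construction can be repaired in the same spirit. Precede your family by the straight-line homotopy from $\mathrm{id}$ to $\tau_0$. This is face-compatible because $\tau_0$ is and face inclusions are affine, and it fixes every vertex, since $\tau_0(e_k)=e_k$. The concatenation then runs from the inclusion to $CL^n$ and keeps $\mathcal K^n_s(e_k)\in[0,1]_k$ for all $s$, so the rest of your argument goes through unchanged.
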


\begin{proof}
Let
\[
r^n:J^n\longrightarrow\Delta^n
\]
be the retraction which is the identity on the simplex part and collapses every attached interval to its attaching point, and set
\[
g^n:=r^n\circ CL^n.
\]
By Lemma~\ref{lemappcl}, the maps $g^n$ are compatible with all face inclusions. Since $\Delta^n$ is convex,
\[
A^n_s(x):=(1-s)x+s\,g^n(x)
\qquad(0\leq s\leq1)
\]
is a face-compatible homotopy from $\mathrm{id}_{\Delta^n}$ to $g^n$.

There is also a face-compatible homotopy from $g^n$ to $CL^n$: it is constant on the central region, and on the $k$-th corner it is given by
\[
B^n_s\Big(\sum_i a_i e_i\Big)=\bigl(s(3a_k-2)\bigr)_k.
\]
The formulas agree along $F_k$, where $3a_k-2=0$. Define
\[
\mathcal K^n_s=
\begin{cases}
A^n_{2s},&0\leq s\leq\frac12,\\
B^n_{2s-1},&\frac12\leq s\leq1.
\end{cases}
\]
This is the required face-compatible homotopy.

For every face inclusion $\delta_\alpha$, face compatibility gives
\[
H^Z_s(\sigma)\circ\delta_\alpha=H^Z_s(\sigma\circ\delta_\alpha).
\]
Hence every $H^Z_s$ is a chain map, and the same front/back-face argument as in Lemma~\ref{lemapphz} gives
\[
AW\,H^Z_s=(H^Z_s\otimes H^Z_s)AW.
\]
The counit is preserved, so each time slice is a dgc morphism. The endpoint identities follow from the definition of $\mathcal K^n$. If $\sigma\in C_n(Y|Z)$, then every path $\gamma_{\sigma(e_k)}$ is constant, and therefore every time slice remains in $C_n(Y|Z)$.
\end{proof}

\begin{proof}[Proof of Proposition~\ref{propqiso}]
Choose $r(y)$ and $\gamma_y$ as in Lemma~\ref{lemapphz}; no continuity of the assignment $y\mapsto r(y)$ or of the family $\{\gamma_y\}$ is required. Lemma~\ref{lemapphz} gives the dgc morphism
\[
h_Z:C_*(Y)\longrightarrow C_*(Y|Z).
\]
By Lemma~\ref{lemapphomotopy}, the face-compatible geometric homotopy from $\mathrm{id}$ to $i_Zh_Z$ yields, through the usual prism construction, an ordinary chain homotopy
\[
i_Zh_Z\simeq\mathrm{id}_{C_*(Y)}.
\]
The restriction of the same homotopy to $C_*(Y|Z)$ yields
\[
h_Zi_Z\simeq\mathrm{id}_{C_*(Y|Z)}.
\]
Thus $i_Z$ and $h_Z$ are mutually inverse up to ordinary chain homotopy, and the more precise assertions about the time slices follow from Lemma~\ref{lemapphomotopy}.
\end{proof}

\begin{lemma}\label{lemappfibcompat}
Let
\[
F\xrightarrow{\hphantom{p}}E\xrightarrow{p}B
\]
be a Serre fibration over the path-connected based space $(B,b_0)$. Choose, for every $b\in B$, a path $\gamma_b$ from $b$ to $b_0$, with $\gamma_{b_0}$ constant. For every $e\in E$, choose a lift $\widetilde\gamma_e$ of $\gamma_{p(e)}$ beginning at $e$, taking the constant lift when $e\in F$. Let $h_B,h_E$ and $H^B_s,H^E_s$ be the maps obtained from Lemmas~\ref{lemapphz} and~\ref{lemapphomotopy}. Then
\[
p_\sharp h_E=h_Bp_\sharp,
\qquad
p_\sharp H^E_s=H^B_sp_\sharp
\quad(0\leq s\leq1).
\]
\end{lemma}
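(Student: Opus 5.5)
The plan is to check the two identities simplex by simplex, using the explicit formulas of Lemmas~\ref{lemapphz} and~\ref{lemapphomotopy}. Both $h_E(\sigma)$ and $H^E_s(\sigma)$ are of the form $\Phi^F_\sigma\circ G$, where $G:\Delta^n\to J^n$ is either $CL^n$ or $\mathcal K^n_s$. Since $p_\sharp$ is postcomposition with $p$, it suffices to prove the equality of maps $J^n\to B$
\[
p\circ\Phi^F_\sigma=\Phi^{b_0}_{p\sigma}.
\]
Once this holds, we get $p_\sharp h_E(\sigma)=p\Phi^F_\sigma CL^n=\Phi^{b_0}_{p\sigma}CL^n=h_B(p_\sharp\sigma)$. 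The same computation with $\mathcal K^n_s$ in place of $CL^n$ gives $p_\sharp H^E_s=H^B_sp_\sharp$ for every $s$.

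The first step is to record the choices. For $Y=B$, $Z=\{b_0\}$, we take $r(b)=b_0$ and the given paths $\gamma_b$, with $\gamma_{b_0}$ constant; this is admissible for Lemma~\ref{lemapphz}. For $Y=E$, $Z=F$, we take $\gamma_e:=\widetilde\gamma_e$ and $r(e):=\widetilde\gamma_e(1)$. We must check the hypotheses of Lemma~\ref{lemapphz}. First, $r(e)\in F$ because $p\widetilde\gamma_e(1)=\gamma_{p(e)}(1)=b_0$. Second, $r(e)$ lies in the path component of $e$. Third, for $e\in F$ the path is constant and $r(e)=e$: here $\gamma_{p(e)}=\gamma_{b_0}$ is constant, so the constant lift is a legitimate choice. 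Lifts exist because a Serre fibration has the homotopy lifting property for $\Delta^0$. Note also that $F$ meets every path component of $E$, since each $e$ is joined to $r(e)\in F$.

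The second step verifies the key equality on each piece of $J^n$. On the simplex part both sides equal $p\sigma$. On the $k$-th attached interval the left side is $p\circ\widetilde\gamma_{\sigma(e_k)}=\gamma_{p\sigma(e_k)}$, because $\widetilde\gamma$ lifts $\gamma_{p(e)}$. The right side is $\gamma_{(p\sigma)(e_k)}$ by definition. So the two maps agree on every piece, hence on $J^n$.

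Because $h$ and $H_s$ are defined simplexwise and linearly extended, the identities follow on chains, and they hold after tensoring with $\Bbbk$. No step is a genuine obstacle. The only delicate point is the consistency of choices: the lift for points of $F$ must be constant, so that the vertex-relative restriction statements of Lemma~\ref{lemapphomotopy} still hold. This compatibility is guaranteed by $\gamma_{b_0}$ being constant.
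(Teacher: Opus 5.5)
Your proposal is correct and follows the paper's proof essentially verbatim: both reduce to the identity $p\circ\Phi^E_\sigma=\Phi^B_{p\sigma}$ on $J^n$. This identity is checked on the simplex part and on each attached interval via $p\widetilde\gamma_{\sigma(e_k)}=\gamma_{p\sigma(e_k)}$, and one then composes with $CL^n$ and with each time slice $\mathcal K^n_s$. Your explicit check that the chosen lifts satisfy the hypotheses of Lemma~\ref{lemapphz} (endpoints in $F$, constancy on $F$, and $F$ meeting every path component of $E$) is a useful detail that the paper leaves implicit.
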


\begin{proof}
For a singular simplex $\sigma:\Delta^n\to E$, let $\Phi^E_\sigma:J^n\to E$ and $\Phi^B_{p\sigma}:J^n\to B$ be the corresponding maps. On the simplex part,
\[
p\Phi^E_\sigma=p\sigma=\Phi^B_{p\sigma},
\]
and on the $k$-th attached interval,
\[
p\bigl(\Phi^E_\sigma|_{[0,1]_k}\bigr)
=p\widetilde\gamma_{\sigma(e_k)}
=\gamma_{p(\sigma(e_k))}
=\Phi^B_{p\sigma}|_{[0,1]_k}.
\]
Hence $p\circ\Phi^E_\sigma=\Phi^B_{p\sigma}$ on $J^n$. Composing with $CL^n$ gives $p_\sharp h_E=h_Bp_\sharp$, and composing with every time slice $\mathcal K^n_s$ gives $p_\sharp H^E_s=H^B_sp_\sharp$.
\end{proof}

\begin{proof}[Proof of Lemma~\ref{lemfibcompatible}]
Choose the paths $\gamma_b$ and lifts $\widetilde\gamma_e$ as in Lemma~\ref{lemappfibcompat}. Lemmas~\ref{lemapphz} and~\ref{lemapphomotopy} show that all the resulting maps and time slices are morphisms of $\mathbb Z$--dgcs, while Lemma~\ref{lemappfibcompat} gives the two required compatibility identities.
\end{proof}


\end{document}